\title[The $L^2$-torsion function and the Thurston norm of $3$-manifolds]
{The $L^2$-torsion function and the Thurston norm of $3$-manifolds}
\author{Stefan Friedl}
\address{Fakult\"at f\"ur Mathematik\\ Universit\"at Regensburg\\93040 Regensburg\\   Germany}
\email{sfriedl@gmail.com}
\urladdr{http://www.uni-regensburg.de/Fakultaeten/nat\_Fak\_I/friedl/index.html}
\author{Wolfgang L\"uck}
        \address{Mathematisches Institut der Universit\"at Bonn\\
                Endenicher Allee 60\\
                53115 Bonn, Germany}
         \email{wolfgang.lueck@him.uni-bonn.de}
          \urladdr{http://www.him.uni-bonn.de/lueck}
        \date{November 2015}
\keywords{$L^2$-Betti numbers, $L^2$-torsion, twisting with finite-dimensional representations, Thurston norm}
    \subjclass[2010]{57M27, 57Q10,  58J52, 22D25}
\DeclareMathAlphabet\EuR{U}{eur}{m}{n}
\SetMathAlphabet\EuR{bold}{U}{eur}{b}{n}
\theoremstyle{plain}
\newtheorem{theorem}{Theorem}[section]
\newtheorem{lemma}[theorem]{Lemma}
\theoremstyle{definition}
\newtheorem{definition}[theorem]{Definition}
\newtheorem{example}[theorem]{Example}
\newtheorem{remark}[theorem]{Remark}
\newtheorem{convention}[theorem]{Convention}
\newtheorem*{claim}{Claim}
\global\let\c@equation=\c@theorem}
\newcommand{\comsquare}[8]                   
{\begin{CD}
#1 @>#2>> #3\\
@V{#4}VV @V{#5}VV\\
#6 @>#7>> #8
\end{CD}
}
\newcommand{\xycomsquare}[8]                   
{\xymatrix
{#1 \ar[r]^{#2} \ar[d]^{#4} &
#3 \ar[d]^{#5}  \\
#6\ar[r]^{#7} &
#8
}
}
\newcommand{\xycomsquareminus}[8]                      
{\xymatrix{#1 \ar[r]^-{#2} \ar[d]^-{#4} &
#3 \ar[d]^-{#5}  \\
#6\ar[r]^-{#7} &
#8
}
}
\newcommand{\caln}{{\mathcal N}}
\newcommand{\IC}{{\mathbb C}}
\newcommand{\IN}{{\mathbb N}}
\newcommand{\IQ}{{\mathbb Q}}
\newcommand{\IR}{{\mathbb R}}
\newcommand{\IZ}{{\mathbb Z}}
\newcommand{\eul}{\operatorname{Eul}}
\newcommand{\id}{\operatorname{id}}
\newcommand{\im}{\operatorname{im}}
\newcommand{\pr}{\operatorname{pr}}
\newcommand{\supp}{\operatorname{supp}}
\newcommand{\tors}{\operatorname{tors}}
\newcommand{\spinc}{\operatorname{Spin}^c}
\newcommand{\largehomo}{$(H_1)_f$-factorizing} 
\newcommand{\largehomoblank}{\largehomo\ }
\newcommand{\tmfrac}[2]{\mbox{\large$\frac{#1}{#2}$}}
\newcommand{\higherlim}[3]{{\setbox1=\hbox{\rm lim}
        \setbox2=\hbox to \wd1{\leftarrowfill} \ht2=0pt \dp2=-1pt
        \mathop{\vtop{\baselineskip=5pt\box1\box2}}
        _{#1}}^{#2}#3}
\newcommand{\version}[1]                       
{\begin{center} last edited on #1\\
last compiled on \today\\
name of texfile: \jobname
\end{center}
}
\newcounter{commentcounter}
\begin{document}

\typeout{----------------------------  twist.tex  ----------------------------}


\typeout{------------------------------------ Abstract ----------------------------------------}

\begin{abstract}
  Let $M$ be an oriented
  irreducible $3$-manifold with infinite fundamental group and empty
  or toroidal boundary which is not $S^1 \times D^2$.  Consider any element $\phi$ in the first cohomology of $M$
  with integer coefficients.  Then one can define the $\phi$-twisted $L^2$-torsion function of the
  universal covering which is a function from the set of positive real numbers to the set of real numbers.
 By earlier work of the second author and Schick the evaluation at $t=1$ determines the volume.

  In this paper we show  that its  degree, which is a number extracted from its asymptotic
  behavior at $0$ and at $\infty$, agrees with the Thurston norm of $\phi$. 
\end{abstract}

\maketitle


 \typeout{-------------------------------   Section 0: Introduction --------------------------------}

\setcounter{section}{-1}
\section{Introduction}
Reidemeister torsion is one of  the first invariants in algebraic topology which are
able to distinguish the diffeomorphism type of closed manifolds which are homotopy equivalent. A
prominent example is the complete classification of lens spaces, see for
instance~\cite{Cohen(1973)}.  The Alexander polynomial, which is one of the basic
invariants for knots and $3$-manifolds, can be interpreted as Reidemeister torsion, see for
instance~\cite{Milnor(1968a)}.  The Reidemeister torsion of a 3-manifold can be generalized in two ways.  Either one
can twist it with an element in the first cohomology which leads for example  to the
twisted Alexander polynomial, see for instance~\cite{Friedl-Vidussi(2011survey)}, or one
can consider the $L^2$-version of appropriate coverings resulting in
$L^2$-torsion invariants, see for instance~\cite[Chapter~3]{Lueck(2002)}.  Recently there
have been attempts to combine these two generalizations and consider twisted
$L^2$-versions. Such generalizations have been considered under the name of
\emph{$L^2$-Alexander torsion} or \emph{$L^2$-Alexander Conway} invariants for knots or $3$-manifolds,
for instance
in~\cite{Dubois-Friedl-Lueck(2014Alexander), Dubois-Friedl-Lueck(2014symmetric), Dubois-Friedl-Lueck(2015flavors),
  Dubois-Wegner(2010), Dubois-Wegner(2015), Li-Zhang(2006volume),
  Li-Zhang(2006Alexander),Li-Zhang(2008)}.

In all of these papers one has to make certain assumptions to ensure that the twisted
$L^2$-versions are well-defined. They concern $L^2$-acyclicity and determinant class. Either
these conditions were just assumed to hold, or verified in special cases by a direct
computation. A systematic study of these twisted $L^2$-invariants under the name 
\emph{$L^2$-torsion function} has been carried out in~\cite{Lueck(2015twisting)}. 
We summarize some of the results of~\cite{Lueck(2015twisting)} for 3-manifolds.
Let $M$ be  a 3-manifold. (Here and throughout the paper we assume that all 3-manifolds are 
compact, connected and oriented with empty or toroidal boundary, unless we say explicitly otherwise.)
If $M$ is irreducible and if it has infinite fundamental group,  then it was shown in~\cite{Lueck(2015twisting)}  that 
all these necessary conditions are satisfied for the universal covering $\widetilde{M}$ and an 
element $\phi \in H^1(M;\IZ)$. The result is an equivalence class of functions
\[
\overline{\rho}^{(2)}({M};\phi) \colon (0,\infty) \to \IR
\]
where we call two functions $f, g \colon (0,\infty) \to \IR$ equivalent if for some integer $m$ we have
$f(t) - g(t) = m \cdot \ln(t)$. We recall the definition in Section~\ref{subsec:The_phi-twisted_L2-torsion_function}.
Note though that this invariant is \emph{minus the logarithm} of the function defined and studied in the aforementioned papers.
In those papers the corresponding function  was usually referred to as the $L^2$-Alexander torsion. 
The convention of this paper brings us in line with~\cite{Lueck(2002)}.

 The evaluation of $\overline{\rho}^{(2)}({M};\phi)$ at $t=1$ is well-defined  and in fact it was shown 
by the second author and Schick~\cite[Theorem~0.7]{Lueck-Schick(1999)} that for any
irreducible 3-manifold  we have
\[ 
\overline{\rho}^{(2)}({M};\phi)(t=1)=-\frac{1}{6\pi} \operatorname{vol}(N),\]
where $\operatorname{vol}(N)$ equals the sum of the volumes of the hyperbolic pieces in the JSJ-decomposition of $N$. 

In the sequence of papers \cite{Dubois-Friedl-Lueck(2014Alexander)},
\cite{Dubois-Friedl-Lueck(2014symmetric)}, \cite{Dubois-Friedl-Lueck(2015flavors)} and \cite{Lueck(2015twisting)} the behavior of 
$\overline{\rho}^{(2)}({M};\phi)$ as $t$ `goes to the extremes', i.e.\ as $t\to 0$ and $t\to \infty$, was studied. 
In particular in~\cite{Lueck(2015twisting)} it was shown 
that for any representative $\rho$
there exist constants $C \ge  0$ and $D \ge  0$ such that  we get  for  $0 < t \le 1$
\[
C \cdot \ln(t) -D
\le 
\rho(t)
\le
- C \cdot \ln(t) + D,
\]
and for  $t \ge 1$
\[
- C \cdot \ln(t)  -D
\le 
\rho(t)
\le
C  \cdot \ln(t) + D.
\]
Hence $\limsup_{t \to 0} \frac{\rho(t))}{\ln(t)}$ and $\liminf_{t \to \infty}
\frac{\rho(t))}{\ln(t)}$  exist and we can define the \emph{degree} of
$\overline{\rho}^{(2)}({M};\phi)$ to be
\[
\deg\bigl(\overline{\rho}^{(2)}({M};\phi)\bigr) := \limsup_{t \to \infty} \frac{\rho(t)}{\ln(t)} - \liminf_{t \to 0} \frac{\rho(t)}{\ln(t)}.
\]
It is obviously independent of the choice of the representative $\rho$. 

Thurston~\cite{Thurston(1986norm)} assigned to $\phi$ another invariant, its
\emph{Thurston norm} $x_M(\phi)$, which we will review in
Subsection~\ref{subsec:Thurston_norm}.

The main result of our paper says that the functions $\overline{\rho}^{(2)}({M};\phi)$ not
only determine the volume of a 3-manifold but that they also determine the Thurston
norm. More precisely, we have the following theorem.

\begin{theorem} \label{the:main_result_introduction} 
  Let $M$ be an irreducible $3$-manifold with infinite fundamental group and empty or
  toroidal boundary which is not homeomorphic to $S^1 \times D^2$. 
  Then we get for any element $\phi \in H^1(M;\IQ)$. 
\[
\deg\bigl(\overline{\rho}^{(2)}({M};\phi)\bigr)  = - x_M(\phi).
\]
\end{theorem}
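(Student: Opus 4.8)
The plan is to prove the two inequalities separately: the inequality $\deg\bigl(\overline{\rho}^{(2)}(M;\phi)\bigr)\ge -x_M(\phi)$ by cutting $M$ along a Thurston-norm-minimizing surface, and the reverse inequality by reducing, via finite covers and Agol's virtual fibering theorem, to the fibered case, where the left hand side is computable. First one carries out the standard reductions. Both $\phi\mapsto\deg\bigl(\overline{\rho}^{(2)}(M;\phi)\bigr)$ and $\phi\mapsto x_M(\phi)$ are homogeneous of degree one --- for the Thurston norm by definition, and for the left side because twisting with $k\cdot\phi$ and parameter $t$ is the same as twisting with $\phi$ and parameter $t^k$, so that $\deg\bigl(\overline{\rho}^{(2)}(M;k\phi)\bigr)=k\cdot\deg\bigl(\overline{\rho}^{(2)}(M;\phi)\bigr)$ --- so one may assume $\phi\in H^1(M;\IZ)$, the case $\phi=0$ being trivial. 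In addition, for a finite covering $p\colon\widehat M\to M$ one has $x_{\widehat M}(p^*\phi)=[\pi_1(M):\pi_1(\widehat M)]\cdot x_M(\phi)$ by Gabai's theorem on the Thurston norm under finite covers, and $\overline{\rho}^{(2)}(\widehat M;p^*\phi)=[\pi_1(M):\pi_1(\widehat M)]\cdot\overline{\rho}^{(2)}(M;\phi)$ because the universal coverings agree and $L^2$-torsion --- in its $\phi$-twisted form as well --- is multiplicative under finite covers; the degrees then scale the same way, so it suffices to prove the identity after replacing $M$ by any finite cover.

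For the inequality $\deg\bigl(\overline{\rho}^{(2)}(M;\phi)\bigr)\ge -x_M(\phi)$ I would refine the argument behind the growth estimates of~\cite{Lueck(2015twisting)}: cut $M$ along an incompressible Thurston-norm-minimizing surface $S$ dual to $\phi$ and feed the resulting Mayer--Vietoris (Wang) sequence into the computation of the $\phi$-twisted $L^2$-torsion of $\widetilde M$; since the ranks of the chain modules of the complement are controlled by $-\chi(S)=x_M(\phi)$, one should obtain $\bigl|\deg\bigl(\overline{\rho}^{(2)}(M;\phi)\bigr)\bigr|\le x_M(\phi)$, which in particular gives the lower bound and settles the theorem whenever $x_M(\phi)=0$. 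For the reverse inequality, assume $x_M(\phi)>0$ and suppose first that $\phi$ is fibered, with fibre $F$, so that $M$ is the mapping torus of a self-homeomorphism $h$ of $F$. Substituting the Wang exact sequence of the fibration into the definition of the twisted $L^2$-torsion and analysing the Fuglede--Kadison determinants of the operators $\id-t\cdot h_i$ on the cellular chains of $\widetilde F$, as in the computation of the $L^2$-Alexander torsion of fibered $3$-manifolds in~\cite{Dubois-Friedl-Lueck(2014symmetric), Dubois-Friedl-Lueck(2015flavors)}, one obtains a representative $\rho$ with $\rho(t)\to 0$ as $t\to 0$ and $\rho(t)+x_M(\phi)\ln(t)$ bounded as $t\to\infty$; hence $\liminf_{t\to 0}\rho(t)/\ln(t)=0$ and $\limsup_{t\to\infty}\rho(t)/\ln(t)=\chi(F)=-x_M(\phi)$, so that $\deg\bigl(\overline{\rho}^{(2)}(M;\phi)\bigr)=-x_M(\phi)$. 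By homogeneity the same identity holds for every rational class in the open fibered cone $C$ of $\phi$, each such class being fibered by Thurston~\cite{Thurston(1986norm)}.

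It remains to treat an arbitrary $\phi$ with $x_M(\phi)>0$. If $M$ is a graph manifold --- that is, has no hyperbolic piece in its JSJ decomposition --- then the $\phi$-twisted $L^2$-torsion function is computable directly, via multiplicativity of the $L^2$-torsion along the JSJ tori together with its known behaviour on Seifert fibered pieces, and one checks the identity against the explicit description of the Thurston norm of a graph manifold. Otherwise $M$ has a hyperbolic JSJ piece, so $\pi_1(M)$ is virtually RFRS by the virtual specialness results of Wise and of Przytycki--Wise, and Agol's virtual fibering theorem, in the refined form of Agol and of Friedl--Kitayama, allows us --- after passing to a further finite cover --- to assume that $\phi$ lies in the closure $\overline C$ of an open fibered cone $C\subseteq H^1(M;\IR)$, on which the Thurston norm coincides with a single linear functional $\ell_C$ (Thurston's theorem on fibered faces~\cite{Thurston(1986norm)}). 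Since $\deg\bigl(\overline{\rho}^{(2)}(M;-)\bigr)=-\ell_C$ on the rational points of $C$ by the previous paragraph and $\phi\mapsto\deg\bigl(\overline{\rho}^{(2)}(M;\phi)\bigr)$ is continuous, this identity extends to $\overline C$, whence $\deg\bigl(\overline{\rho}^{(2)}(M;\phi)\bigr)=-\ell_C(\phi)=-x_M(\phi)$.

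The two steps I expect to be hardest are the fibered computation --- where one must pin down the asymptotics of the Fuglede--Kadison determinant occurring in the Wang sequence, above all the vanishing $\liminf_{t\to 0}\rho(t)/\ln(t)=0$, and identify the growth at infinity with $\chi(F)$ --- and the continuity of the degree function $\phi\mapsto\deg\bigl(\overline{\rho}^{(2)}(M;\phi)\bigr)$, which is what licenses the passage from the open fibered cone to its boundary; the graph-manifold case, while in principle elementary, will also require some care, as will the bookkeeping needed to make the surface decomposition yield exactly the bound $|\deg|\le x_M(\phi)$ rather than a weaker multiple of it.
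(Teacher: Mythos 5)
Your overall architecture matches the paper's: a one\hbox{-}sided bound coming from a presentation of the chain complex adapted to $\phi$ whose size is controlled by $x_M(\phi)$ (this is essentially Theorem~\ref{the:lower_bound}, obtained there from the McMullen--Friedl presentation and the operator-norm estimate of Lemma~\ref{lem:degree_estimate} rather than from a literal Mayer--Vietoris argument), an exact computation in the fibered case (Theorem~\ref{thm:dfl-fibered}), a reduction to the quasi-fibered case via the Virtual Fibering Theorem, and a separate treatment of graph manifolds. The reductions to integral classes and to finite covers are also the ones used in the paper.

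However, there is a genuine gap at the decisive step: you pass from the open fibered cone $C$ to its closure by invoking ``continuity of $\phi\mapsto\deg\bigl(\overline{\rho}^{(2)}(M;\phi)\bigr)$''. This continuity is not available; it is precisely one of the statements that Liu proves by independent methods and that the present paper explicitly does \emph{not} establish. Moreover, the naive limiting argument fails for a concrete reason: for a fibered class $\phi_n$ the exact asymptotic behaviour of $\rho^{(2)}(M,\mathfrak{s};\mu,\phi_n)(t)$ only sets in for $t<1/T_n$ and $t>T_n$, where $T_n$ is the entropy of the monodromy, and $T_n\to\infty$ as $\phi_n$ approaches the boundary of the cone. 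Hence the asymptotic (degree) information about the $\phi_n$ does not survive the limit $n\to\infty$; one needs estimates valid for \emph{all} $t$, uniformly in $n$. The paper manufactures these as follows: Lemma~\ref{lem:uniform_estimate} converts the exact asymptotics at $0$ and $\infty$ into inequalities valid for every $t>0$ --- but only over groups with virtually cyclic image of $\phi$, via a Mahler-measure computation --- so one must first replace $G$ by its finite quotients $Q_i$ (Lemma~\ref{lem:mapping_tori_uniform_estimate}); continuity in $\phi$ is then obtained only at the level of these virtually finitely generated free abelian quotients, from Boyd's continuity theorem for Mahler measures (Lemma~\ref{lem:continuity_of_det}); and finally one returns to $G$ using the one-sided approximation inequality for Fuglede--Kadison determinants (Theorem~\ref{the:Twisted_Approximation_inequality}), which is why this route yields only the \emph{upper} bound for $\rho^{(2)}$ and must be complemented by the independent lower bound of Theorem~\ref{the:lower_bound}. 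Without some substitute for this chain of arguments --- or an independent proof of continuity of the degree, as in Liu's work --- your passage from $C$ to $\overline{C}$ does not go through.
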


Actually we get a much more general result, where we can consider not only the universal
covering but appropriate $G$-coverings $G \to \overline{M} \to M$ and get estimates for
the $L^2$-function for all times $t \in (0,\infty)$ which imply the equality of the
degree and the Thurston norm, see Theorem~\ref{the:Lower_and_upper_bounds}.

The main ingredients for the proof are the estimates for the twisted $L^2$-torsion
function and approximation techniques presented in~\cite{Lueck(2015twisting)},
the proof of the Virtual Fibering Theorem of Agol~\cite{Agol(2008),Agol(2013)}, Wise~\cite{Wise(2012hierachy)}
and Przytycki-Wise~\cite{Przytycki-Wise(2012)}
and a careful analysis of Mahler measures.
\\

\noindent \emph{Added in proof.} We just learned that Liu~\cite{Liu(2015)} has given 
a completely independent proof of Theorem~\ref{the:main_result_introduction}. 
The techniques used in both papers are at times somewhat similar. 
Liu~\cite{Liu(2015)} goes on to prove several other very interesting results that are not 
covered in this paper. In particular he proves Theorem~\ref{the:main_result_introduction} also for real classes
and shows the continuity of the $L^2$-torsion function.


\subsection*{Conventions and notations.}
We view elements in $\IZ[G]^k$ always as row vectors.  Given a group $G$ and an $m\times
n$-matrix over $\IZ[G]$ we denote by the $r_A$ the homomorphism $\IZ[G]^m\to \IZ[G]^n$
given by right multiplication by $A$. Furthermore, given a group homomorphism
$\gamma\colon G\to H$ we denote by $\gamma(A)$ the matrix over $\IZ[H]$ given by applying
$\gamma$ to all entries. Throughout the paper we assume that all 3-manifolds are compact,
connected and oriented, unless we say explicitly otherwise.

\subsection*{Acknowledgments.}
The first author gratefully acknowledges the support provided by the SFB 1085 ``Higher
Invariants'' at the University of Regensburg, funded by the Deutsche Forschungsgemeinschaft {DFG}. 
The paper is financially supported by the Leibniz-Preis of the second author granted by the {DFG}
and the ERC Advanced Grant ``KL2MG-interactions'' (no.  662400) of the second author granted by the European Research Council.
We  thank Yi Liu for helpful comments.



 \typeout{------   Section 1: Review of the $\phi$-twisted $L^2$-torsion function and Thurston norm   --------}

\section{Review of the  $\phi$-twisted $L^2$-torsion function  and the Thurston norm}
\label{sec:Review_of_the_phi-twisted_L2-torsion_function_and_the_Thurston_norm}

In this section we recall some basic definitions, notions and results
from~\cite{Dubois-Friedl-Lueck(2014Alexander),Dubois-Friedl-Lueck(2014symmetric),Lueck(2015twisting)},
in order to keep this paper self-contained. For basic information about $L^2$-Betti
numbers, Fuglede-Kadison determinants and $L^2$-torsion we refer to~\cite{Lueck(2002)}.


\subsection{Euler structures and $\operatorname{Spin}^c$-structures} 
\label{sub:Base_refinement}
Let $X$ be a finite CW complex and let $p\colon \widetilde{X}\to X$ be the universal
covering of $X$.  Following Turaev~\cite{Turaev(1990),Turaev(2001),Turaev(2002)}, we define
a \emph{fundamental family of cells} to be a choice for each open cell in $X$ of precisely
one open cell in $\widetilde{X}$ which projects to the given cell in $X$.

We write $\pi=\pi_1(X)$ and we denote by $\psi\colon \pi\to H_1(\pi;\IZ)=H_1(X;\IZ)$ the
abelianization map. Now let $\{e_i\}_{i\in I}$ and $\{\hat{e}_i\}_{i\in }$ be two
fundamental families of cells. After reordering them we can arrange that for each $i\in I$
we have $e_i=g_i\hat{e}_i$ for some $g_i\in G$. We say that two fundamental families of
cells are \emph{equivalent} if
\[ \sum_{i\in I}(-1)^{\operatorname{dim}(e_i)}\psi(g_i)=0.\] The set of equivalence
classes of fundamental families of cells on $X$ is called the set $\eul(X)$ of \emph{Euler
  structures on $X$}. Note that the set of Euler structures on $X$ admits a free and
transitive action by $H_1(X;\IZ)$.

We recall some basic facts regarding $\spinc$-structures on 3-manifolds, with empty or
toroidal boundary. We refer to~\cite[Chapter~XI]{Turaev(2002)} for a detailed discussion.
Given a 3-manifold $M$ we denote by $\spinc(M)$ the set of $\spinc$-structures on $M$.
The set $\spinc(M)$ comes with a canonical free and transitive action by $H_1(M;\IZ)$.
Given $\mathfrak{s}\in \spinc(M)$ we denote by $c_1(\mathfrak{s})\in H^2(M,\partial
M;\IZ)=H_1(M;\IZ)$ its Chern class.  The Chern class has the property that for each
$\mathfrak{s}\in \spinc(M)$ and $h\in H_1(M;\IZ)$ the following equality holds

\begin{equation} \label{equ:c1he} c_1(h\mathfrak{s})=2h+c_1(\mathfrak{s}).\end{equation}

In~\cite{Turaev(2002),Turaev(1997)} Turaev shows that given any CW-structure $X$ for $M$
there exists a canonical $H_1(M;\IZ)=H_1(X;\IZ)$-equivariant bijection $\spinc(M)\to
\eul(X)$.


\subsection{$(L^2$-acyclic) admissible pairs and the $\phi$-twisted $L^2$-torsion function}
\label{subsec:L2-acyclic_admissible_triple}
\label{subsec:The_phi-twisted_L2-torsion_function}

In~\cite{Dubois-Friedl-Lueck(2014Alexander),Dubois-Friedl-Lueck(2014symmetric)} the
authors and Dubois introduced the $\phi$-twisted $L^2$-torsion function of a
3-manifold. This definition was later generalized and analyzed
in~\cite[Section~7]{Lueck(2015twisting)} for $G$-coverings of compact connected manifolds
in all dimensions.

We start out with the following definitions.

\begin{definition}
\begin{enumerate}
\item In the following, given  any abelian group $A$ we will denote
\begin{eqnarray*}
A_f & = & A/\tors(A).
\label{A_f}
\end{eqnarray*}
\item We say that a group homomorphism $\mu \colon \pi \to G$ is \emph{\largehomo}, if the
  projection map $\pi\to H_1(\pi;\IZ)_f$ factors through $\mu$.
\item  An   \emph{admissible pair} $(M,\mu)$ consists of an irreducible
  $3$-manifold $M\ne S^1\times D^2$ with infinite fundamental group and a \largehomoblank
  group homomorphism $\mu \colon \pi_1(M) \to G$  to a residually finite countable  group $G$.
     Denote by $\overline{M} \to M$ the $G$-covering
    associated to $\mu$.  We say that $(M,\mu)$ is \emph{$L^2$-acyclic} if the $n$-th
    $L^2$-Betti number $b_n^{(2)}(\overline{M};\caln(G))$ vanishes for every $n \ge 0$.
\end{enumerate}
\end{definition}

Many of the subsequent results will hold in more general situations, e.g., it is not
always necessary to assume that $G$ is residually finite or that $\mu$ is \largehomo. 
Nonetheless, in an attempt to keep the paper readable we will note state all the results
in the maximal generality.

\begin{convention}
  If $\mu \colon\pi\to G$ is a \largehomoblank epimorphism, then we can and will identify
  $\operatorname{Hom}(\pi,\IR)$ with $\operatorname{Hom}(G,\IR)$. Furthermore, given any
  space $X$ we make the usual identifications
  $H^1(X;\IR)=\operatorname{Hom}(H_1(X;\IZ),\IR)=\operatorname{Hom}(\pi_1(X),\IR)$. In
  particular, if $(M,\mu\colon \pi_1(M)\to G)$ is an admissible pair, such that the
  cokernel of $\mu$ is finite, then any $\phi\in H^1(M;\IR)$ induces a unique homomorphism $G\to  \IR$ that, 
  by a slight abuse of notation, we also denote by $\phi$.

\end{convention}

The following lemma is an immediate consequence of \cite{Hempel(1987)},
\cite{Lott-Lueck(1995)} and the proof of the Geometrization Theorem by Perelman.

\begin{lemma} \label{lem:admissible_triple–universal_covering}
If $M\ne S^1\times D^2$ is an irreducible  $3$-manifold
with infinite fundamental group,  then $(M,\id_{\pi_1(M)})$ is an $L^2$-acyclic admissible pair.
\end{lemma}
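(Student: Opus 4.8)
The plan is to verify the two separate requirements hidden in the statement: that $(M,\id_{\pi_1(M)})$ is an \emph{admissible pair}, and that it is moreover \emph{$L^2$-acyclic}. For admissibility we must check that $\id_{\pi_1(M)}$ is \largehomoblank and that $\pi_1(M)$ is residually finite and countable. The \largehomoblank condition is trivial here, since the projection $\pi_1(M)\to H_1(\pi_1(M);\IZ)_f$ visibly factors through the identity; countability is clear as $M$ is compact; and residual finiteness of $\pi_1(M)$ for an irreducible $3$-manifold with infinite fundamental group is the classical consequence of Geometrization, following from the fact that such manifolds are finitely covered by Haken manifolds (Hempel~\cite{Hempel(1987)}) together with Thurston's result that Haken manifolds have residually finite fundamental group (this is where Perelman's proof of Geometrization enters, to rule out the exceptional cases and to justify the JSJ/geometric decomposition). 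Since $M\ne S^1\times D^2$ and $M$ is irreducible with infinite $\pi_1$, $M$ is not $S^1\times S^2$ and not $\IR P^3\# \IR P^3$, so the hypotheses of these results apply.

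For $L^2$-acyclicity we must show $b_n^{(2)}(\widetilde{M};\caln(\pi_1(M)))=0$ for all $n\ge 0$. The vanishing for $n=0$ is immediate because $\pi_1(M)$ is infinite. For $n\ge 3$ it is automatic since $\widetilde{M}$ is (homotopy equivalent to a CW complex of) dimension at most $3$, and in the closed case Poincar\'e duality for $L^2$-Betti numbers reduces everything to $n=0$ and $n=1$ anyway, while in the boundary case one uses that $M$ is homotopy equivalent to a $2$-complex. The substantive point is the vanishing of $b_1^{(2)}$ (and hence $b_2^{(2)}$), which is precisely the content of Lott--L\"uck~\cite{Lott-Lueck(1995)}: they compute the $L^2$-Betti numbers of all compact $3$-manifolds in terms of the geometric decomposition and show that for irreducible $M$ with infinite $\pi_1$ and empty or toroidal boundary all $L^2$-Betti numbers vanish, again modulo Geometrization for the classification of the pieces. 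So this lemma is genuinely an assembly of \cite{Hempel(1987)}, \cite{Lott-Lueck(1995)} and Perelman's work rather than a new argument.

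The only real subtlety — and the step I expect to be the main obstacle to present cleanly — is bookkeeping the excluded cases: one must make sure that the hypotheses ``$M$ irreducible, $\pi_1(M)$ infinite, $M\ne S^1\times D^2$'' exactly match the hypotheses under which \cite{Lott-Lueck(1995)} proves vanishing and under which residual finiteness is known, with no Seifert-fibered or graph-manifold exceptional case slipping through. In particular one should note that an irreducible $3$-manifold with infinite fundamental group is aspherical (by the Sphere Theorem plus Hopf, or directly from Geometrization), so $\widetilde{M}$ is contractible and the $L^2$-Betti numbers are those of the group $\pi_1(M)$ together with the boundary, and that the possible prime $3$-manifolds with \emph{finite} $\pi_1$ or reducible topology are precisely what the hypotheses remove. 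Once these case-checks are in place the proof is just: cite Hempel and Thurston (via Perelman) for residual finiteness, cite Lott--L\"uck (via Perelman) for the vanishing of all $L^2$-Betti numbers, and observe that the remaining conditions in the definition of an admissible pair are immediate. I would therefore keep the write-up to a few sentences, pointing to the three references and flagging the asphericity observation, rather than reproving anything.
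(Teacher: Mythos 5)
Your proposal is correct and follows exactly the route the paper intends: the paper gives no argument at all beyond declaring the lemma "an immediate consequence" of Hempel~\cite{Hempel(1987)}, Lott--L\"uck~\cite{Lott-Lueck(1995)} and Perelman's proof of Geometrization, and your write-up is precisely the expansion of that citation into the two checks (residual finiteness for admissibility, the Lott--L\"uck computation for $L^2$-acyclicity). The only nitpick is attributional rather than mathematical: residual finiteness of geometrizable $3$-manifold groups is the content of Hempel's paper itself, not a separate result of Thurston's that Hempel merely feeds into.
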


Now consider an $L^2$-acyclic admissible pair $(M;\mu\colon \pi_1(M)\to G)$ with
$\spinc$-structure $\mathfrak{s}\in \spinc(M)$. Let $\phi\in H^1(M;\IQ)$.  We pick a
CW-structure for $M$, which by abuse of notation we denote again by $M$.  We denote by
$\widetilde{M}$ the universal cover of $M$ and we write $\pi=\pi_1(M)$.  We pick a
fundamental family of cells in $\widetilde{M}$ that corresponds to $\mathfrak{s}$.

This fundamental family of cells turns $C_*(\widetilde{M})$ into a chain complex of based
free $\IZ[\pi]$-left modules. (The basis is now unique up to permutation and multiplying each element with $\pm -1$ what 
will not affect the Hilbert space structure and hence the $\phi$-twisted $L^2$-torsion function below.) 
We view $\IZ[G]$ as a right $\IZ[\pi]$-module via the
homomorphism $\mu$. We obtain the chain complex
$\IZ[G]\otimes_{\IZ[\pi]}C_*(\widetilde{M})$ of based free $\IZ[G]$-left modules.

Now let $t\in (0,\infty)$. We denote by $\phi^*\IC_t$ the based $1$-dimensional complex
$G$-representation whose underlying complex vector space is $\IC$ and on which $g \in G$
acts by multiplication with $t^{\phi(g)}$. Twisting with $\phi^*\IC_t$ transforms a $\IC
G$-homomorphism $\IC G \to \IC G$ given by right multiplication with the element $\sum_{g
  \in G} \lambda_g \cdot g$ to the $\IC G$-homomorphism $\IC G \to \IC G$ given by right
multiplication with the element $\sum_{g \in G} \lambda_G \cdot t^{\phi(g)} \cdot g$. It
is obvious how this extends to $\IC G$-left linear maps $\IC G^m \to \IC G^n$ and then to
$\IC G\otimes_{\IZ \pi }C_*(\widetilde{M})$.  Thus twisting $\IC G\otimes_{\IZ \pi
}C_*(\widetilde{M})$ with $\phi^*\IC_t$ yields a finite free $\IC G$-chain complex
$\eta_{\phi^*\IC_t}(\IC G\otimes_{\IZ \pi}C_*(\widetilde{M}))$ with a $\IC G$-basis.

Given a $\IC G$-linear map $A\colon \IC G^m \to \IC G^n$, we obtain by applying $L^2(G)
\otimes_{\IC G} -$ a morphism $\Lambda^G(A)$ of finitely generated Hilbert $\caln(G)$-modules 
$L^2(G)^m \to L^2(G)^n$.  Thus we obtain from $\eta_{\phi^*\IC_t}(\IC G\otimes_{\IZ \pi}C_*(\widetilde{M}))$ by
applying $L^2(G) \otimes_{\IC G} -$ a finite Hilbert $\caln(G)$-chain complex denoted by
$\Lambda^G \circ \eta_{\phi^*\IC_t}(\IC G\otimes_{\IZ \pi}C_*(\widetilde{M}))$.
By our hypothesis this chain complex is $\det$-$L^2$-acyclic 
(in the sense of~\cite[Definition~3.29 on page~140]{Lueck(2002)})
for $t=1$. By~\cite[Theorem~6.7]{Lueck(2015twisting)} we know that it is then also  $\det$-$L^2$-acyclic for any $t\in (0,\infty)$. 
In particular the $\caln(G)$-chain complex has well-defined $L^2$-torsion for any $t\in (0,\infty)$.   
Define the \emph{$\phi$-twisted $L^2$-torsion function}
  \begin{equation}
  \begin{array}{rcl}
     \rho^{(2)}(M,\mathfrak{s};\mu,\phi) \colon (0,\infty)& \to& \IR \\
     t&\mapsto & \rho^{(2)}\Bigl(\Lambda^G \circ \eta_{(\phi \circ \nu)^*\IC_t} (\IC G\otimes_{\IZ \pi}C_*(\widetilde{M}))\Bigr).
   \label{rho(2)(M,[B_M];mu,phi)}
  \end{array}
 \end{equation}
for any choice of homomorphism $\nu \colon G \to H_1(\pi;\IZ)$ such that 
$\nu \circ \mu$ agrees with the projection
$\pi \to H_1(\pi;\IZ)$. The right handside is indeed independent of the
choice of $\nu$. Namely, if $G'$ is the image of $\mu$ and $\mu' \colon \pi \to G'$ is the epimorphism induced by $\mu$,
then there is precisley one homomorphism $\nu' \colon G' \to H_1(\pi;\IZ)$ such that $\nu' \circ \mu'$ agrees with the projection
$\pi \to H_1(\pi;\IZ)$ and we get 
\[
\rho^{(2)}\Bigl(\Lambda^G \circ \eta_{(\phi \circ \nu)^*\IC_t} (\IC G\otimes_{\IZ \pi}C_*(\widetilde{M}))\Bigr)
= \rho^{(2)}\Bigl(\Lambda^{G'} \circ \eta_{(\phi \circ \nu')^*\IC_t} (\IC G'\otimes_{\IZ \pi}C_*(\widetilde{M}))\Bigr)
\]
from~\cite[Theorem~7.7~(7)]{Lueck(2015twisting)}.
More details of this construction and the proof that it  is
well-defined
can be found in~\cite[Section~7]{Lueck(2015twisting)} and, with slightly different conventions, in~\cite{Dubois-Friedl-Lueck(2014Alexander)}.

If $\mu$ is the identity homomorphism, then we drop it from the notation.
Put differently, we write $ \rho^{(2)}(M,\mathfrak{s};\phi) := \rho^{(2)}(M,\mathfrak{s};\id_{\pi_1(M)},\phi) $.


\subsection{Comparing the $\phi$-twisted $L^2$-torsion function and the $L^2$-Alexander torsion}
\label{subsec:Comparing_the_phi-twisted_L2-torsion_function_and_the_L2_Alexander_torsion}

The $\phi$-twisted $L^2$-torsion function ${\rho}^{(2)}(M,\mathfrak{s};\mu,\phi) \colon
(0,\infty) \to \IR$, as considered in this paper and in~\cite{Lueck(2015twisting)},
is designed in an additive setup, as it is  the main convention when dealing with
related invariants such as topological $L^2$-torsion, analytic $L^2$-torsion, analytic
Ray-Singer torsion and so on.  When dealing with torsion invariants in dimension $3$, the
multiplicative setting is  standard, which is the reason why we defined for instance
in~\cite{Dubois-Friedl-Lueck(2014Alexander),Dubois-Friedl-Lueck(2014symmetric)} the $L^2$-Alexander torsion multiplicatively
as a function $\tau^{(2)}(M,\mathfrak{s};\phi,\mu) \colon (0,\infty) \to [0,\infty)$. 


If $(M,\mu)$ is  $L^2$-acyclic, then it follows immediately from 
the definitions and the conventions used in the various papers and from~\cite{Lueck(2015twisting)}, that these two invariants  are related by the formula
\begin{eqnarray}
\tau^{(2)}(M,\mathfrak{s};\phi,\mu)
& = & 
\exp\bigl(-{\rho}^{(2)}(M,\mathfrak{s};\mu,\phi)\bigr).
\label{relation_between_rho_and_tau}
\end{eqnarray}
Notice that~\eqref{relation_between_rho_and_tau} implies that $\tau^{(2)}(M,\mathfrak{s};\phi,\mu)$
never takes the value zero.  This is a consequence of
Theorem~\ref{the:Properties_of_the_twisted_L2-torsion_function}~(1) which was not
available when~\cite{Dubois-Friedl-Lueck(2014Alexander)} was finished.

Notice the minus sign appearing in the formula~\eqref{relation_between_rho_and_tau}. 
This has  the consequence that
the degree $\deg\bigl(\tau^{(2)}(M,\mathfrak{s};\phi,\mu) \bigr)$ defined 
in~\cite{Dubois-Friedl-Lueck(2014Alexander)}
and the degree $\deg\bigl({\rho}^{(2)}(M,\mathfrak{s};\mu,\phi)\bigr)$ 
defined in the introduction and later again in~\eqref{degree(overline(rho)(2)(M,;mu,phi))}
are related by
\begin{eqnarray}
\deg\bigl(\tau^{(2)}(M,\mathfrak{s};\phi,\mu)\bigr)
& = & 
-\deg\bigl({\rho}^{(2)}(M,\mathfrak{s};\mu,\phi)\bigr).
\label{relation_between_the_degrees_of_rho_and_tau}
\end{eqnarray}
In the following we will cite results from
\cite{Dubois-Friedl-Lueck(2014Alexander),Dubois-Friedl-Lueck(2014symmetric)} about
$\tau^{(2)}(M,\mathfrak{s};\phi,\mu)$, which via
(\ref{relation_between_the_degrees_of_rho_and_tau}) we reinterpret as results on
${\rho}^{(2)}(M,\mathfrak{s};\mu,\phi)$.


\subsection{Properties of  the $\phi$-twisted $L^2$-torsion function}

The following theorem summarizes some of the key properties of the $\phi$-twisted $L^2$-torsion function.

\begin{theorem}[Properties of the twisted $L^2$-torsion function]
\label{the:Properties_of_the_twisted_L2-torsion_function}
Let $(M,\mu)$ be an $L^2$-acyclic admissible pair, let $\phi\in H^1(M;\IR)$  and let $\mathfrak{s}\in  \spinc(M)$.

\begin{enumerate}
\item[$(1)$] \label{the:Properties_of_the_twisted_L2-torsion_function:determinant_class} 
\emph{Pinching estimate}\\
There exist constants $C$ and $D$ such that we get  for  $0 < t \le 1$
\[
C \cdot \ln(t) -D
\le 
\rho^{(2)}(M,\mathfrak{s};\mu,\phi)(t)
\le
- C \cdot \ln(t) + D,
\]
and for  $t \ge 1$
\[
- C \cdot \ln(t)  -D
\le 
\rho^{(2)}(M,\mathfrak{s};\mu,\phi)(t)
\le
C  \cdot \ln(t) + D;
\]

\item[$(2)$] \emph{Dependence on the $\spinc$-structure}\\ For any $h\in H_1(M;\IZ)$ we have 
\[ \rho^{(2)}(M,h\mathfrak{s};\mu,\phi)=\rho^{(2)}(M,\mathfrak{s};\mu,\phi)+\ln(t)\cdot \phi(h).\]
\item[$(3)$] \emph{Covering formula}\\
  Let $p\colon \widehat{M}\to M$ be a finite regular covering such that $\ker(\mu)\subset
  \widehat{\pi}:=\pi_1(\widehat{M})$. We write $\widehat{\phi}:=p^*\phi$ and we denote by
  $\widehat{\mu}$ the restriction of $\mu$ to $\widehat{\pi}$. Finally we write
  $\widehat{\mathfrak{s}}:=p^*(\mathfrak{s})$. Then for all $t$ we have
\[\rho^{(2)}(\widehat{M},\widehat{\mathfrak{s}};\widehat{\phi},\widehat{\mu})(t)=[\widehat{N}:N]\cdot \rho^{(2)}(M,\mathfrak{s},\phi,\mu)(t).\]
\item[$(4)$] \label{the:Properties_of_the_twisted_L2-torsion_function:scaling} \emph{Scaling $\phi$}\\
Let $r \in \IR$. 
Then we get for all $t \in (0,\infty)$
\[
\rho^{(2)}(M,\mathfrak{s};\mu,r\phi)(t) = \rho^{(2)}(M,\mathfrak{s};\mu,\phi)(t^r).
\]  
\item[$(5)$] \emph{Symmetry}\\
For any $t\in (0,\infty)$ we have 
\[ \rho(M,\mathfrak{s};\mu,\phi)(t^{-1})=-\phi(c_1(\mathfrak{s}))\ln(t)+\rho(M,\mathfrak{s};\mu,\phi)(t).\]
\end{enumerate}
\end{theorem}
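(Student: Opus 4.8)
The plan is to obtain all five assertions by translating into the additive normalization used here, via~\eqref{relation_between_rho_and_tau}, statements that are already available in~\cite{Lueck(2015twisting)},~\cite{Dubois-Friedl-Lueck(2014Alexander)} and~\cite{Dubois-Friedl-Lueck(2014symmetric)}, supplemented by two short direct computations for~(2) and~(4). For assertion~(1) I would recall that, by construction, $\rho^{(2)}(M,\mathfrak{s};\mu,\phi)(t)$ is the $L^2$-torsion of the $\det$-$L^2$-acyclic finite Hilbert $\caln(G)$-chain complex $\Lambda^G\circ\eta_{\phi^*\IC_t}(\IC G\otimes_{\IZ\pi}C_*(\widetilde{M}))$, and that the pinching estimate, with constants $C$ and $D$ that depend on the chosen CW-structure and on $\phi$ but not on $t$, is exactly the relevant theorem of~\cite{Lueck(2015twisting)}; equivalently it is the estimate on $\tau^{(2)}(M,\mathfrak{s};\phi,\mu)$ established in~\cite{Dubois-Friedl-Lueck(2014Alexander)}, read through~\eqref{relation_between_rho_and_tau}.

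For assertion~(2) the input is the change-of-basis behaviour of $L^2$-torsion. Replacing $\mathfrak{s}$ by $h\cdot\mathfrak{s}$ multiplies, for each cell $e_i$ of $M$, the corresponding basis element in degree $\dim(e_i)$ by some $g_i\in\pi$ subject to $\sum_i(-1)^{\dim(e_i)}\psi(g_i)=h$; under $\eta_{\phi^*\IC_t}$ the element $g_i$ rescales the affected free $\IC G$-summand by the scalar $t^{\phi(\psi(g_i))}$, whose Fuglede--Kadison determinant on $L^2(G)$ is again $t^{\phi(\psi(g_i))}$. Assembling these contributions, the signs $(-1)^{\dim(e_i)}$ from the definition of Euler structures match those in the change-of-Hilbert-basis formula for $\rho^{(2)}$, and since $\rho^{(2)}$ is minus the logarithm of the associated torsion one obtains the correction term $\sum_i(-1)^{\dim(e_i)}\ln\bigl(t^{\phi(\psi(g_i))}\bigr)=\ln(t)\cdot\phi(h)$. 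Assertion~(4) is even quicker: $(r\phi)^*\IC_t$ and $\phi^*\IC_{t^r}$ are literally the same based $1$-dimensional $\IC G$-representation, because every $g\in G$ acts on each by multiplication with $t^{r\phi(g)}=(t^r)^{\phi(g)}$, so the two twisted based $\IC G$-chain complexes coincide and hence so do their $L^2$-torsions.

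For assertion~(3) I would restrict the $G$-covering $\overline{M}\to M$ along $\widehat{\pi}\le\pi$: since $\ker(\mu)\subseteq\widehat{\pi}$, the total space $\overline{M}$ is simultaneously the $\widehat{G}$-covering of $\widehat{M}$ attached to $\widehat{\mu}$, where $\widehat{G}:=\mu(\widehat{\pi})$ has in $\mu(\pi)$ finite index equal to the degree of $p$; moreover $\widehat{\phi}=p^*\phi$ restricts $\phi$ to $\widehat{G}$ and $\widehat{\mathfrak{s}}=p^*\mathfrak{s}$ corresponds to the pulled-back fundamental family of cells, $p$ being cellular. The formula then follows from the multiplicativity of $L^2$-torsion under restriction to a finite-index subgroup---the twisted analogue of the covering formula of~\cite{Lueck(2002)}, which is among the properties established in~\cite{Lueck(2015twisting)}. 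For assertion~(5) I would quote the symmetry of the $L^2$-Alexander torsion from~\cite{Dubois-Friedl-Lueck(2014symmetric)}, namely $\tau^{(2)}(M,\mathfrak{s};\phi,\mu)(t^{-1})=t^{\phi(c_1(\mathfrak{s}))}\cdot\tau^{(2)}(M,\mathfrak{s};\phi,\mu)(t)$, and take minus its logarithm via~\eqref{relation_between_rho_and_tau}; conceptually this reflects Poincar\'e duality, as replacing $t$ by $t^{-1}$ corresponds, up to a degree shift, to passing to the dual based $\IC G$-chain complex of $\eta_{\phi^*\IC_t}(\IC G\otimes_{\IZ\pi}C_*(\widetilde{M}))$, the term $\phi(c_1(\mathfrak{s}))\ln(t)$ recording, via~\eqref{equ:c1he}, the discrepancy between the chosen fundamental family of cells and the dual one.

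The real content of~(1),~(3) and~(5) resides in the cited papers, so the only genuinely new work is the bookkeeping in~(2) and the representation identification in~(4). Accordingly I expect the main point requiring care to be making the sign $(-1)^{\dim(e_i)}$ in the definition of Euler structures agree with the sign with which a change of Hilbert basis in degree $\dim(e_i)$ enters the $L^2$-torsion in~(2), together with the clean reconciliation of the multiplicative conventions of~\cite{Dubois-Friedl-Lueck(2014symmetric)} with the additive ones used here in~(5).
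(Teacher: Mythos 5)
Your proposal is correct and follows essentially the same route as the paper, which likewise disposes of all five assertions by citing the pinching estimate and covering formula from~\cite{Lueck(2015twisting)}, the results of~\cite{Dubois-Friedl-Lueck(2014Alexander),Dubois-Friedl-Lueck(2014symmetric)} for (2) and (5), and the tautological identification $(r\phi)^*\IC_t=\phi^*\IC_{t^r}$ for (4). Your added bookkeeping for (2) and the sign check translating the multiplicative symmetry into the additive normalization for (5) are consistent with the stated formulas and merely spell out what the paper leaves to the references.
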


Statement (1) is proved in~\cite[Theorem~7.4~(i)]{Lueck(2015twisting)}, it is one of the
main results of that paper. Statement (2) is proved in
\cite{Dubois-Friedl-Lueck(2014Alexander)} and \cite{Dubois-Friedl-Lueck(2014symmetric)}.
Statement (3) is proved in~\cite[Theorem~5.7~(6)]{Lueck(2015twisting)}
and~\cite[Lemma~5.3]{Dubois-Friedl-Lueck(2014Alexander)} without explicitly mentioning
$\spinc$-structures. Nonetheless, it is straightforward to see that the proofs provided in
the literature also imply the statement about $\spinc$-structures.  Statement (4) is
basically a tautology, see~\cite[Theorem~7.4~(5)]{Lueck(2015twisting)}
and~\cite[Lemma~5.2]{Dubois-Friedl-Lueck(2014Alexander)}.  Finally Statement (5) is
obtained in the proof of Theorem~1.1 of~\cite{Dubois-Friedl-Lueck(2014symmetric)}.

Define two functions $f_0 , f_1 \colon (0,\infty) \to \IR$ to be \emph{equivalent} if there is an 
$m \in \IR$ such that $f_1(t) - f_0(t) = m \cdot \ln(t)$ holds. Because of 
Theorem~\ref{the:Properties_of_the_twisted_L2-torsion_function}~(2) the equivalence
class of the function $\rho^{(2)}(M,\mathfrak{s};\mu,\phi)$ defined in~\eqref{rho(2)(M,[B_M];mu,phi)}
is independent of the choice of the $\spinc$-structure, and will be denoted by 
\begin{eqnarray}
& \overline{\rho}^{(2)}(M;\mu,\phi). & 
\label{overline(rho)(2)(M,;mu,phi)}
\end{eqnarray}

Theorem~\ref{the:Properties_of_the_twisted_L2-torsion_function}~%
 (1)
allows us to define the degree of $\overline{\rho}^{(2)}(M;\mu,\phi)$ by
\begin{eqnarray}
\deg\bigl(\overline{\rho}^{(2)}(M;\mu,\phi)\bigr) 
& = & 
\limsup_{t \to \infty} \frac{\rho(t)}{\ln(t)} - \liminf_{t \to 0}  \frac{\rho(t)}{\ln(t)}
\label{degree(overline(rho)(2)(M,;mu,phi))}
\end{eqnarray}
for any representative $\rho \colon (0,\infty) \to \IR$ of $\overline{\rho}^{(2)}(M;\mu,\phi)$.


\subsection{Approximation}
\label{subsec:Approximation}
The following is a consequence of one of the main technical results 
of~\cite{Lueck(2015twisting)}.

\begin{theorem}[Twisted Approximation inequality]
\label{the:Twisted_Approximation_inequality}
Let $\phi \colon G \to \IR$  be  a group homomorphism whose  image is finitely generated.

Consider a nested sequence of normal subgroups of $G$
\[
G  \supseteq G_0 \supseteq G_1 \supseteq G_2 \supseteq \cdots 
\]
such that $G_i$ is contained in $\ker(\phi) $ and the intersection $\bigcap_{i \ge 0} G_i$
is trivial. Suppose  that the index $[\ker(\phi) : G_i]$ is finite for all $i \ge 0$.
Put $Q_i := G/G_i$.  Let $\phi_i \colon Q_i \to \IR$ be the homomorphism
uniquely determined by $\phi_i \circ \pr_i = \phi$, where $\pr_i \colon G \to Q_i$ is the
canonical projection.

Fix an $(r,s)$-matrix $A \in M_{r,s}(\IZ G)$.
Denote by $A[i]$ the image of $A$ under the map $M_{r,s}(\IZ G) \to M_{r,s}(\IZ Q_i)$ 
induced by the projection $\pr_i$.

Then we get 
\begin{align*}
\begin{array}{rcl}
\dim_{\caln(G)}\bigl(\ker(\Lambda^G \circ \eta_{\phi^* \IC_t}(r_A))\bigr)
& = & 
\lim_{i \to \infty} \dim_{\caln(Q_i)}\bigl(\ker(\Lambda^{Q_i} \circ \eta_{\phi_i^* \IC_t}(r_{A[i]}))\bigr)
\intertext{and}
{\det}_{\caln(G)}\bigl(\Lambda^G \circ \eta_{\phi^* \IC_t}(r_A)\bigr)
& \ge  & 
\limsup_{i \to \infty} {\det}_{\caln(Q_i)}\bigl(\Lambda^{Q_i} \circ \eta_{\phi_i^*\IC_t}(r_{A[i]})\bigr).
\end{array}\end{align*}
\end{theorem}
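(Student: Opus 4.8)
The strategy is to deduce the twisted statement from the untwisted approximation results of~\cite{Lueck(2015twisting)} by a change-of-ring trick that absorbs the twisting with $\phi^*\IC_t$ into a modification of the matrix $A$. First I would fix $t \in (0,\infty)$ and observe that the functor $\eta_{\phi^*\IC_t}$ acting on $r_A$ replaces the matrix $A = \sum_g a_g g$ (entrywise) by the matrix $A(t) := \sum_g a_g t^{\phi(g)} g$, which is now a matrix over the group ring $\IC G$ with positive-real coefficients. The key point is that $\phi$ takes values in a finitely generated subgroup of $\IR$, hence (after choosing a $\IZ$-basis $s_1,\dots,s_k$ of that subgroup) the exponents $t^{\phi(g)}$ all lie in the multiplicative group generated by $t^{s_1},\dots,t^{s_k}$; so $A(t)$ is a matrix over the subring $R_t := \IZ[t^{\pm s_1},\dots,t^{\pm s_k}][G] \subseteq \IC G$, which is a localization of a group ring $\IZ[G \times \IZ^k]$. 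Thus $\Lambda^G \circ \eta_{\phi^*\IC_t}(r_A)$ is, up to this identification, an $\caln(G)$-morphism coming from a matrix over $\IZ[G\times \IZ^k]$ composed with evaluation at the point $(t^{s_1},\dots,t^{s_k})$; but the cleanest route is to keep it as a morphism of Hilbert $\caln(G)$-modules and apply directly the general approximation theorem of~\cite{Lueck(2015twisting)} to the morphism of $\caln(G)$-chain complexes (or single maps) in question.

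Concretely, the plan is: (i) Check that the hypotheses $G_i \subseteq \ker(\phi)$ guarantee that the twisting descends, i.e. $\phi_i$ is well-defined on $Q_i = G/G_i$ with $\phi_i \circ \pr_i = \phi$, and that the image of $A(t)$ under $\IC G \to \IC Q_i$ is exactly $A[i](t) = \eta_{\phi_i^*\IC_t}(r_{A[i]})$ applied to the same vectors — this is a naturality statement for $\eta$ under the quotient maps, and it uses crucially that the $G_i$ lie in $\ker\phi$ so that $t^{\phi(g)}$ only depends on the class of $g$ in $Q_i$. (ii) Invoke the untwisted $L^2$-approximation results of~\cite{Lueck(2015twisting)} — the dimension (Lück approximation) equality and the Fuglede–Kadison determinant inequality — for the $\caln(G)$-morphisms $\Lambda^G \circ \eta_{\phi^*\IC_t}(r_A)$ viewed as maps induced by matrices over the enlarged ring, with the nested chain $G_i \triangleleft G$ of finite index in $\ker(\phi)$ and trivial intersection. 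Since $\eta_{\phi^*\IC_t}(r_A)$ is nothing but right multiplication by $A(t)$, and $A(t)$ has entries in a fixed subring independent of $i$, the hypotheses of the general approximation theorem apply verbatim, giving the $\dim$-equality and the $\ge \limsup$ inequality for $\det$. (iii) Translate back via the naturality from step (i) to read these as statements about $\dim_{\caln(Q_i)}(\ker(\Lambda^{Q_i}\circ\eta_{\phi_i^*\IC_t}(r_{A[i]})))$ and ${\det}_{\caln(Q_i)}(\Lambda^{Q_i}\circ\eta_{\phi_i^*\IC_t}(r_{A[i]}))$, which is exactly the asserted conclusion.

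The main technical obstacle I expect is step (ii): the standard approximation theorems are usually phrased for matrices over $\IZ G$ (or $\IQ G$), not for matrices with coefficients in $\IR$ or in a localized group ring, and the determinant inequality in particular is delicate because one needs uniform control (a common $\det$-class / lower bound for the Novikov–Shubin or spectral density function) along the tower. The way around this is precisely the observation that $A(t)$ has entries in $\IZ[G \times \IZ^k]$ (after clearing denominators, which only changes the determinant by a computable factor that is the same at every level and hence cancels in the comparison), so that one is genuinely in the group-ring situation and the approximation theorem of~\cite{Lueck(2015twisting)}, which is stated for arbitrary matrices over integral group rings of residually finite groups in a tower, applies. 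One then has to be careful that passing from $G$ to $G \times \IZ^k$ is compatible with the tower — one replaces $G_i$ by $G_i \times \{0\}$, whose intersection is still trivial and whose index in $\ker(\phi) \times \IZ^k$ is still finite — and that the evaluation $t^{s_j} \in \IR_{>0}$ does not interfere, since evaluation at a fixed point commutes with all the von Neumann-dimension and determinant computations that only see the $G$-variables. Once this bookkeeping is in place the result follows, the inequality (rather than equality) in the determinant statement being inherited directly from the corresponding inequality in the untwisted approximation theorem.
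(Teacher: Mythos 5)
Your proposal diverges from the paper's proof, and the divergence hides a genuine gap. The paper's proof is a two-line reduction: since $\im(\phi)$ is finitely generated, one factors $\phi = j\circ\phi'$ with $\phi'\colon G\to\IZ^d$ an epimorphism and $j\colon\IZ^d\to\IR$ a monomorphism, and then quotes the \emph{twisted} approximation theorem~\cite[Theorem~6.52]{Lueck(2015twisting)} applied to $\phi'$ in the special case $V=j^*\IC_t$. All of the analytic work --- in particular the uniform control of Fuglede--Kadison determinants of twisted operators along the tower $Q_i$ --- is carried out in that reference; the proof here contributes only the factorization through $\IZ^d$. Your step (i), the naturality of the twist under $G\to Q_i$ coming from $G_i\subseteq\ker(\phi)$, is correct and is implicitly used in the paper as well.

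The gap is in your step (ii). You propose to reduce to the \emph{untwisted} approximation theorem by lifting $A(t)=\sum_g a_g\, t^{\phi(g)}\, g$ to a matrix $\widehat{A}$ over $\IZ[G\times\IZ^k]$, asserting that ``evaluation at a fixed point commutes with all the von Neumann-dimension and determinant computations that only see the $G$-variables.'' This is false, and it is exactly where the argument breaks: the evaluated operator acts on $L^2(G)$ and its invariants are computed in $\caln(G)$, whereas the lifted operator acts on $L^2(G\times\IZ^k)$ and its invariants are computed in $\caln(G\times\IZ^k)$; these are genuinely different quantities. Concretely, take $G=\IZ=\langle z\rangle$, $\phi=\id$, $A=(z-2)$, $k=1$: then ${\det}_{\caln(\IZ)}\bigl(\Lambda^{\IZ}(r_{tz-2})\bigr)=\max\{t,2\}$ depends on $t$, while the untwisted determinant of the lift $wz-2$ over $\caln(\IZ\times\IZ)$ is the two-variable Mahler measure, namely the constant $2$. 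Hence applying the untwisted approximation theorem to $\widehat{A}$ with the tower $G_i\times\{0\}$ compares the wrong invariants, and no bookkeeping about indices or trivial intersections recovers the twisted statement at a fixed $t$. (A lesser issue: $\IZ[t^{\pm s_1},\dots,t^{\pm s_k}]$ is a quotient of $\IZ[\IZ^k]$, not a copy of it, whenever $t^{s_1},\dots,t^{s_k}$ satisfy algebraic relations, so even the lift $\widehat{A}$ is not well defined from $A(t)$ alone.) For the dimension equality one could conceivably use the independence of the twisted kernel dimension from $t$ to reduce to $t=1$ and the untwisted theorem over $G$ itself, but the determinant inequality genuinely depends on $t$ and requires an approximation theorem already formulated for twisted operators, which is what~\cite[Theorem~6.52]{Lueck(2015twisting)} supplies.
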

\begin{proof}
Since the image of $\phi$ is finitely generated, we can choose a monomorphism $j \colon \IZ^d \to \IR$ 
and an epimorphism $\phi' \colon G \to \IZ^d$ with $\phi = j \circ \phi$. 
Now we apply~\cite[Theorem~6.52]{Lueck(2015twisting)} 
to $\phi'$ in the special case $V = j^*\IC_t$.
\end{proof}


\subsection{The Thurston norm}
\label{subsec:Thurston_norm}

Recall  the definition in~\cite{Thurston(1986norm)} of the \emph{Thurston norm} $x_M(\phi)$ 
of a  $3$-manifold $M$
and  an element  $\phi\in H^1(M;\IZ)=\operatorname{Hom}(\pi_1(M),\IZ)$ 
 \[
x(\phi):=\min \{ \chi_-(F)\, | \, F \subset N \mbox{ properly embedded surface dual to }\phi\},
\]
where, given a surface $F$ with connected components $F_1, F_2, \ldots , F_k$, we define
\[\chi_-(F)=\sum_{i=1}^k \max\{-\chi(F_i),0\}.\]

Thurston~\cite{Thurston(1986norm)} showed that this defines a seminorm on
$H^1(M;\mathbb{Z})$ which can be extended to a seminorm on $H^1(M;\mathbb{R} )$ which we
also denote by $x_M$ again.  In particular we get for $r \in \IR$  and $\phi \in H^1(N;\IR)$
\begin{eqnarray}
x_M(r \cdot \phi) 
& = & 
|r| \cdot x_M(\phi).
\label{scaling_Thurston_norm}
\end{eqnarray}
If $p \colon N \to M$ is a finite covering with $n$ sheets, then
Gabai~\cite[Corollary~6.13]{Gabai(1983)} showed that
\begin{eqnarray}
x_N(p^*\phi) 
& = & 
n \cdot x_M(\phi).
\label{finite_coverings_Thurston_norm}
\end{eqnarray}
If $F \to M \xrightarrow{p} S^1$ is a fiber bundle for a  $3$-manifold $M$
and compact  surface $F$
and $\phi \in H^1(M;\IZ)$ is given by $H_1(p) \colon H_1(M) \to H_1(S^1)=\IZ$, then by~\cite[Section~3]{Thurston(1986norm)} we have 
\begin{eqnarray}
x_M(\phi) & = & 
\begin{cases}
- \chi(F) & \text{if} \;\chi(F) \le 0;
\\
0 & \text{if} \;\chi(F) \ge 0.
\end{cases}
\label{x_for_fiber_bundles}
\end{eqnarray}


 \typeout{----   Section 2: Fox calculus and the $\phi$-twisted $L^2$-torsion function in terms of the second differential------}

\section{Calculating the $\phi$-twisted $L^2$-torsion function}
\label{sec:Fox_calculus_and_the_phi-twisted_L2-_torsion_function_in_terms_of_the_second_differential}

The following theorem says that given $M$ and $\psi\in H^1(M;\IQ)$ the corresponding $L^2$-torsion functions 
can be computed using one fixed square matrix over $\IZ[\pi_1(M)]$ together with a well-understood error term.

\begin{theorem}
 \label{the:calculation_of_L2-torsion_from_a_presentation} 
Let $M$ be a $3$-manifold with $b_1(M)>0$, let $\mathfrak{s}\in  \spinc(M)$. We write $\pi=\pi_1(M)$.
\begin{enumerate}

\item[$(1)$] \label{the:calculation_of_L2-torsion_from_a_presentation:non-empty} Suppose
  $\partial M$ is non-empty and toroidal. Then there exists an $s\in \pi_1(M)$ and a
  square matrix $A$ over $\IZ[\pi]$ such that the following conditions are satisfied for
  any \largehomoblank homomorphism $\mu\colon \pi\to G$ and any homomorphism $\phi\colon
  G\to \IR$:
\begin{enumerate}
\item[$(a)$]  $b_n^{(2)}(\overline{M};\caln(G)) = 0$ holds for all $n \ge 0$ if and only if
$\dim_{\caln(G)}\bigl(\ker(\Lambda^G \circ \eta_{\phi^* \IC_t}(r_A))\bigr)$ vanishes for all $t > 0$.
\item[$(b)$] If  $(a)$  is the case, then $(M,\mu)$ is  $\phi$-twisted $\det$-$L^2$-acyclic $($in the sense
of~\cite[Definition~7.1]{Lueck(2015twisting)}$)$ 
and we get 
\[ \hspace{1cm}\rho^{(2)}(M,\mathfrak{s};\mu,\phi)(t) 
=  - \ln \bigl({\det}_{\caln(G)}(\Lambda^G \circ \eta_{\phi^* \IC_t}(r_A))\bigr)+\eta(t)\]
where $\eta(t)$ is given by
\[
\eta(t)=\max\{0,|\phi(s)| \cdot \ln(t)\}.\]
\end{enumerate}
\item[$(2)$] \label{the:calculation_of_L2-torsion_from_a_presentation:empty} Suppose $M$
  is closed. Then there exist $s,s'\in \pi_1(M)$ and a square matrix $A$ over $\IZ[\pi]$
  such that the following conditions are satisfied for any \largehomoblank homomorphism
  $\mu\colon \pi\to G$ and any homomorphism $\phi\colon G\to \IR$:
\begin{enumerate}
\item[$(a)$]  $b_n^{(2)}(\overline{M};\caln(G)) = 0$ holds for all $n \ge 0$ if and only if
$\dim_{\caln(G)}\bigl(\ker(\Lambda^G \circ \eta_{\phi^* \IC_t}(r_A))\bigr)$ vanishes for all $t > 0$.
\item[$(b)$] If $(a)$  is the case, then $(M,\mu)$ is  $\phi$-twisted $\det$-$L^2$-acyclic and we get 
\[ \hspace{1cm}\rho^{(2)}(M,\mathfrak{s};\mu,\phi)(t) 
=  - \ln \bigl({\det}_{\caln(G)}(\Lambda^G \circ \eta_{\phi^* \IC_t}(r_A))\bigr)+\eta(t)\]
where $\eta(t)$ is given by
\[\hspace{1cm}
\eta(t)=\max\{0,|\phi(s)| \cdot \ln(t)\}+\max\{0,|\phi(s')| \cdot \ln(t)\}.\]
\end{enumerate}
\end{enumerate}
\end{theorem}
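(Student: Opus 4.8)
The plan is to build the matrix $A$ explicitly from a suitable presentation of $\pi$ via Fox calculus, and then to reconcile this algebraic chain complex with the CW-chain complex $C_*(\widetilde M)$ used to define $\rho^{(2)}(M,\mathfrak{s};\mu,\phi)$, keeping careful track of the $\spinc$-structure (i.e.\ the choice of fundamental family of cells) and of the unavoidable boundary/degree-$3$ contributions that produce the error term $\eta(t)$. First I would treat case (1). Since $\partial M$ is nonempty and toroidal and $b_1(M)>0$, $M$ is homotopy equivalent to a finite $2$-complex, and in fact $M$ admits a CW-structure, or a handle decomposition, with a single $0$-cell, $g$ one-cells, and $g-1$ two-cells corresponding to a presentation $\langle x_1,\dots,x_g \mid r_1,\dots,r_{g-1}\rangle$ of $\pi$ of deficiency one. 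The cellular chain complex of $\widetilde M$ is then
\[
0 \to \IZ[\pi]^{g-1} \xrightarrow{\partial_2} \IZ[\pi]^{g} \xrightarrow{\partial_1} \IZ[\pi] \to 0,
\]
where $\partial_2$ is the Fox Jacobian matrix $\bigl(\frac{\partial r_j}{\partial x_i}\bigr)$ and $\partial_1$ is the column vector $(x_i - 1)$. Choosing coordinates and one distinguished generator (say $x_1$) with $\phi(\mu(x_1))\neq 0$ if possible — otherwise the argument simplifies — one splits off the acyclic subcomplex generated by $x_1$ and the corresponding relator, reducing the computation of the $L^2$-torsion of the twisted complex to that of a single square $(g-1)\times(g-1)$ matrix $A$ (a submatrix of the Fox Jacobian after row/column operations), at the cost of the Fuglede--Kadison determinant of the $1\times 1$ block $\eta_{\phi^*\IC_t}(r_{x_1-1})$, whose determinant is exactly $\max\{1,t^{|\phi(\mu(x_1))|}\}$ by the standard computation over $\IZ[\IZ]$ (cf.\ the Mahler-measure computation for $t-1$). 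Taking logarithms gives the term $\eta(t)=\max\{0,|\phi(s)|\ln(t)\}$ with $s=x_1$.

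For statement (a) I would invoke the fact that $L^2$-torsion (and $L^2$-acyclicity) of a finite free $\caln(G)$-chain complex with a chosen basis is computed from the determinants and kernel dimensions of the differentials, together with the reduction above: the full complex is $\det$-$L^2$-acyclic with vanishing $b_n^{(2)}$ iff the only possibly nonzero differential, namely $\Lambda^G\circ\eta_{\phi^*\IC_t}(r_A)$, is a weak isomorphism of determinant class, i.e.\ has trivial kernel for all $t$ (the determinant-class condition for all $t$ then follows automatically from Theorem~\ref{the:Properties_of_the_twisted_L2-torsion_function}~(1)/\cite[Theorem~6.7]{Lueck(2015twisting)}, and $b_n^{(2)}(\overline M;\caln(G))=0$ for all $n$ is the $t=1$ instance). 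The $\spinc$-structure enters only through the choice of lifts of cells, which changes the bases by group elements and hence changes $\rho^{(2)}$ by a term of the form $\ln(t)\cdot\phi(h)$ as in Theorem~\ref{the:Properties_of_the_twisted_L2-torsion_function}~(2); one arranges the fundamental family of cells to match $\mathfrak{s}$ and absorbs any residual discrepancy into the bookkeeping, noting that permutations and signs on basis elements do not affect the Hilbert-module structure.

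For statement (2), the closed case, the plan is to remove an open $3$-ball to land in a bounded situation, or equivalently to use a Heegaard-type / CW-structure on a closed $M$ with one $0$-cell, $g$ one-cells, $g$ two-cells and one $3$-cell. Now the chain complex has a nontrivial $\partial_3\colon\IZ[\pi]\to\IZ[\pi]^g$ in addition to $\partial_2$ and $\partial_1$; after splitting off the acyclic part of $\partial_1$ against one one-cell/two-cell pair (generator $s$) and the acyclic part of $\partial_3$ against another one-cell/two-cell pair (generator $s'$), one is again left with a single square matrix $A$, and the determinant of the discarded rank-one blocks contributes the two-term error $\eta(t)=\max\{0,|\phi(s)|\ln(t)\}+\max\{0,|\phi(s')|\ln(t)\}$. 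The main obstacle I anticipate is the careful homological algebra of this splitting: one must verify that the row and column operations used to isolate $A$ are realized by $\IZ[\pi]$-linear (indeed $\IZ[G]$-linear after twisting) base changes of determinant $\pm g$ for $g\in\pi$, so that they do not alter the Fuglede--Kadison determinant except by controlled $\ln(t)$ terms, and that this can be done uniformly in $\mu$ and $\phi$ (the matrix $A$ and the elements $s,s'$ must be chosen once and for all, before $\mu$ and $\phi$ are specified). Ensuring that the splitting is possible — e.g.\ that $\partial_1$ has an entry $x_i-1$ that becomes a unit-like factor after twisting, which is where the hypothesis $b_1(M)>0$ and the choice of a generator not in $\ker(\mu)$ or with $\phi$-nontrivial image is used — is the delicate point, and in the degenerate sub-case where no such generator exists the error term degenerates to a constant ($\phi(s)=0$), which is consistent with the stated formula.
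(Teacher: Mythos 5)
Your overall architecture coincides with the paper's: put the chain complex of $\widetilde M$ into a normal form in which $\partial_1$ (and, in the closed case, $\partial_3$) is built from entries $s_i-1$, delete one row and one column of the middle Fox--Jacobian-type matrix to obtain $A$, and account for the discarded $1\times 1$ blocks via the explicit Fuglede--Kadison determinant ${\det}_{\caln(G)}\bigl(\Lambda^G(r_{t^{\phi(s)}\mu(s)-1})\bigr)=\max\{1,t^{|\phi(s)|}\}$. The paper obtains this normal form from McMullen's presentation of $C_*(\widetilde X)$ and delegates the bookkeeping to~\cite[Lemma~3.2]{Dubois-Friedl-Lueck(2014Alexander)}, so in that sense you are reproducing the intended proof.

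The one point you yourself single out as delicate is, however, the point you get wrong. The condition that legitimizes the row/column deletion is \emph{not} that $\phi(\mu(s))\neq 0$, nor merely that $s\notin\ker(\mu)$: what is needed is that $\mu(s)$ has \emph{infinite order} in $G$, since only then is $\Lambda^G(r_{t^{\phi(s)}\mu(s)-1})$ a weak isomorphism (if $\mu(s)$ has finite order $n$, the kernel has von Neumann dimension $1/n>0$, so the split-off subcomplex is not $L^2$-acyclic and the reduction fails outright rather than ``simplifying''). The paper secures this uniformly in $\mu$ and $\phi$ by choosing the distinguished generators $s$ (and $s'$) among those that are nontrivial in $H_1(M;\IZ)_f$ --- possible precisely because $b_1(M)>0$ --- and then using that $\mu$ is \largehomo, which forces $\mu(s)$ to have infinite order for \emph{every} admissible $\mu$. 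With that choice the case $\phi(s)=0$ is harmless (the corresponding error term is simply $0$), and your ``degenerate sub-case where no such generator exists'' never occurs. Once this criterion is corrected, your argument matches the paper's.
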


\begin{proof}
  We only treat the case, where $\partial M$ is empty, and leave it to the reader to
  figure out the details for the case of a non-empty boundary using the proof
  of~\cite[Theorem~2.4]{Lueck(1994a)}.   From~\cite[Proof of Theorem~5.1]{McMullen(2002)} we
  obtain the following:
\begin{enumerate}
  \item a compact $3$-dimensional $CW$-complex $X$ together with a homeomorphism
  $f \colon X \to M$ (in the following we identify $\pi = \pi_1(M) = \pi_1(X)$ using $\pi_1(f)$),
  \item  two sets of generators $\{s_1,\dots,s_a\}$ and $\{s_1', \dots ,s_a'\}$ of $\pi$,
  \item an $a\times a$-matrix $F$ over $\IZ[\pi]$,
  \end{enumerate}
such the cellular
  $\IZ \pi$-chain complex $C_*(\widetilde{X})$ of the universal cover $\widetilde{X}$ looks for an appropriate
 fundamental family of cells
  like
\[
\IZ \pi
\xrightarrow{\prod\limits_{i=1}^a r_{s_i' - 1}} 
\bigoplus_{i=1}^a \IZ \pi 
\xrightarrow{r_{F}} \bigoplus_{i=1}^a \IZ \pi 
\xrightarrow{\bigoplus\limits_{i=1}^a r_{s_i - 1}} \IZ \pi.
\]
The based $\IZ G$-chain complex $\IZ G\otimes_{\IZ \pi]}C_*(\widetilde{X})$ looks like 
\[
\IZ G 
\xrightarrow{\prod\limits_{i=1}^a r_{\mu(s_i') - 1}} 
\bigoplus_{i=1}^a \IZ G 
\xrightarrow{r_{\mu(F)}} \bigoplus_{i=1}^a \IZ G 
\xrightarrow{\bigoplus\limits_{i=1}^a r_{\mu(s_i) - 1}} \IZ G.
\]
Then the Hilbert $\caln(G)$-chain complex $\Lambda^G
\circ \eta_{\phi^*\IC_t}(C_*(\overline{X}))$ looks like
\begin{multline*}
L^2(G)  
\xrightarrow{{\prod\limits_{i=1}^a \Lambda^G\big(r_{t^{\phi(s_1')} \cdot \mu(s_i')-1}\big)}}
\bigoplus_{i=1}^a L^2(G)
\xrightarrow{\Lambda^G \circ \eta_{\phi^*\IC_t}(r_{\mu(F)})} \bigoplus_{i=1}^a L^2(G)  
\\
\xrightarrow{\bigoplus\limits_{i=1}^a \Lambda^G\big(r_{t^{\phi(s_i)} \cdot \mu(s_i)- 1}\big)} L^2(G).
\end{multline*}
Since $b_1(M)>0$ is non-trivial there exist $i,j\in\{1,\dots,a\}$ such that $s_i$ and
$s_j$ represent non-zero elements in $H_1(M;\IZ)_f$. For later we record, that given any
\largehomoblank homomorphism $\mu\colon \pi\to G$ the images $\mu(s)$ and $\mu(s')$ have
infinite order.  We denote by $A$ the matrix that is obtained from $F$ by removing the
$i$-th column and the $j$-th row.

For $g \in G$ and $t \in (0,\infty)$  let $D(g,t)_*$ be the $1$-dimensional
Hilbert $\caln(G)$-chain complex which has as first differential
$\Lambda^G(r_{t^{\phi(g)} \cdot g -1}) \colon L^2(G) \to L^2(G)$. Provided
that $|g| = \infty$ holds, $D(g,t)_*$ is $L^2$-$\det$-acyclic and a direct
computation using~\cite[Theorem~3.14~(6) on page~129 and~(3.23) on page~136]{Lueck(2002)}
shows
\begin{equation}
\rho^{(2)}(D(g,t)_*)
 = 
\ln \bigl({\det}_{\caln(G)}\bigl(\Lambda^G(r_{t^{-\phi(g)} \cdot  g -1}\bigr)\bigr)\bigr)
\label{the:calculation_of_L2-torsion_from_a_presentation:(1)} 
=  \max\{|\phi(g)| \cdot \ln(t),0\}.
\end{equation}
Now let $\mathfrak{s}\in \spinc(M)$ be the $\spinc$-structure that corresponds to the
above fundamental family of cells. It follows from
\cite[Lemma~3.2]{Dubois-Friedl-Lueck(2014Alexander)} that the above group elements $s,s'$
and the matrix $A$ have all the desired properties.

If $\mathfrak{t}\in \spinc(M)$ is a different $\spinc$--structure, then we can write
$\mathfrak{t}=h\mathfrak{s}$ for some $h\in H_1(M;\IZ)$. We pick a representative $g\in
\pi$ of $h$ and we multiply one column of $A$ by $h$ to obtain the matrix with the desired
properties.
\end{proof}


 \typeout{-------------------------   Section 3: Lower bounds -----------------------------------}

\section{Lower bounds}
\label{sec:Lower_bounds}

The elementary proof of the next lemma can be found in~\cite[Lemma~6.9]{Lueck(2015twisting)}.

\begin{lemma} \label{lem:det_estimate_in_terms_of_norm}
Let $f \colon L^2(G)^m \to L^2(G)^n$ be a bounded $G$-equivariant operator.  Then
\[
{\det}_{\caln(G)}(f) \le \|f\|^{\dim_{\caln(G)}(\overline{\im(f)})}.
\]
\end{lemma}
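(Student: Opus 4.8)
The plan is to reduce the claim to the basic behavior of the Fuglede--Kadison determinant under the polar decomposition and the spectral calculus. First I would recall that for a bounded $G$-equivariant operator $f \colon L^2(G)^m \to L^2(G)^n$, the Fuglede--Kadison determinant is defined via the spectral measure of the positive operator $f^*f$ (or equivalently $|f| = (f^*f)^{1/2}$), restricted to the closure of the image, namely
\[
{\det}_{\caln(G)}(f) = \exp\left(\int_{(0,\infty)} \ln(\lambda) \, dF(\lambda)\right),
\]
where $F$ is the spectral density function of $|f|$ and the integral is taken over the positive part of the spectrum, so that the total mass of $dF$ on $(0,\infty)$ equals $\dim_{\caln(G)}(\overline{\im(f)})$. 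Here one uses that $\dim_{\caln(G)}(\overline{\im(f)}) = \dim_{\caln(G)}(\overline{\im(|f|)})$, which follows from the polar decomposition $f = u|f|$ with $u$ a partial isometry onto $\overline{\im(f)}$.

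Next I would invoke the elementary estimate $\lambda \le \|f\|$ for all $\lambda$ in the support of $dF$, since $\||f|\| = \|f\|$ and the spectrum of $|f|$ is contained in $[0,\|f\|]$. Taking logarithms, $\ln(\lambda) \le \ln(\|f\|)$ pointwise on the support of $dF$ (this step is harmless whether or not $\|f\| \le 1$; one should only note that if $\ln(\|f\|)$ or the integral diverges to $-\infty$ the inequality still holds trivially, and the determinant-class hypothesis is not actually needed for this direction). Integrating the pointwise bound against $dF$ over $(0,\infty)$ gives
\[
\int_{(0,\infty)} \ln(\lambda)\, dF(\lambda) \le \ln(\|f\|) \cdot \int_{(0,\infty)} dF(\lambda) = \dim_{\caln(G)}(\overline{\im(f)}) \cdot \ln(\|f\|),
\]
and exponentiating yields the asserted inequality.

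I do not expect a serious obstacle here; the only mild subtlety is bookkeeping about whether the Fuglede--Kadison determinant is normalized so that the spectral integral runs over $(0,\infty)$ or over all of $[0,\infty)$ with the convention $\ln(0) := 0$ at the atom at zero — in either convention the mass that matters is exactly $\dim_{\caln(G)}(\overline{\im(f)})$, so one has to state clearly which normalization (that of \cite[Section~3.2]{Lueck(2002)}) is in force and then the monotonicity of the integral does the rest. Alternatively, and perhaps more cleanly, I would cite the multiplicativity/monotonicity properties of ${\det}_{\caln(G)}$ collected in \cite[Theorem~3.14]{Lueck(2002)} directly, together with the identity ${\det}_{\caln(G)}(c \cdot \id|_{\overline{\im(f)}}) = c^{\dim_{\caln(G)}(\overline{\im(f)})}$ for a scalar $c = \|f\|$, applied to the inequality of positive operators $|f| \le \|f\| \cdot \operatorname{pr}_{\overline{\im(f)}}$.
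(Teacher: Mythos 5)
Your first argument is correct and is exactly the elementary spectral-density computation that the paper's cited source (\cite[Lemma~6.9]{Lueck(2015twisting)}) uses: the measure $dF$ of $|f|$ on $(0,\infty)$ has total mass $\dim_{\caln(G)}(\overline{\im(f)})$ and is supported in $(0,\|f\|]$, so $\int\ln(\lambda)\,dF\le \dim_{\caln(G)}(\overline{\im(f)})\cdot\ln(\|f\|)$, with the non-determinant-class case trivial. Only the closing ``alternative'' has a small slip worth noting: $|f|$ acts on the source, so the correct comparison is $|f|\le\|f\|\cdot\operatorname{pr}_{\overline{\im(f^*)}}$ (equivalently $\operatorname{pr}_{(\ker f)^\perp}$), which is harmless since $\dim_{\caln(G)}(\overline{\im(f^*)})=\dim_{\caln(G)}(\overline{\im(f)})$.
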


The next result is an improvement of~\cite[Proposition~9.5]{Dubois-Friedl-Lueck(2014Alexander)}.

\begin{lemma}\label{lem:degree_estimate}
Consider bounded $G$-equivariant operators 
$f_0,f_1 \colon L^2(G)^m \to L^2(G)^m$.
For $t > 0$ we define
\[
f[t] := f_0 + t \cdot f_1.
\]
Suppose that for every $t > 0$ the operator $f[t] \colon L^2(G)^m \to L^2(G)^m$ is $L^2$-$\det$-acyclic.
Put
\[
\rho \colon (0,\infty) \to (0,\infty), \quad t \mapsto \ln({\det}_{\caln(G)}(f[t])).
\]
Then we get
\[
\begin{array}{rclclcl}
\rho(t)
& \le & 
m \cdot \max\bigl\{0,\ln (\|f_0\| +\|f_1\|)\bigr\}
& & \text{for}\;  t  \le 1;
\\
\rho(t)
& \le  &  \dim_{\caln(G)}(\overline{\im(f_1)}) \cdot  \ln(t) + m \cdot \max\bigl\{0, \ln (2 \cdot \|f_0\| +\|f_1\|)\bigr\}
& & \text{for}\;  t \ge  1.
\end{array}
\]
In particular we get 
\begin{eqnarray*}
\limsup_{t \to \infty} \frac{\rho(t)}{\ln(t)} 
& \le & 
\dim_{\caln(G)}(\overline{\im(f_1)});
\\
\liminf_{t \to 0} \frac{\rho(t)}{\ln(t)} 
& \ge & 
0;
\\
\limsup_{t \to \infty} \frac{\rho(t)}{\ln(t)}  - \liminf_{t \to 0} \frac{\rho(t)}{\ln(t)} 
& \le & 
\dim_{\caln(G)}(\overline{\im(f_1)}).
\end{eqnarray*}
\end{lemma}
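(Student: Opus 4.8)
The plan is to estimate $\det_{\caln(G)}(f[t])$ directly using Lemma~\ref{lem:det_estimate_in_terms_of_norm} together with elementary properties of the Fuglede-Kadison determinant, distinguishing the two regimes $t \le 1$ and $t \ge 1$.

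First, for $0 < t \le 1$: by the triangle inequality $\|f[t]\| = \|f_0 + t f_1\| \le \|f_0\| + t\|f_1\| \le \|f_0\| + \|f_1\|$. Since $f[t]$ maps $L^2(G)^m$ to $L^2(G)^m$, we have $\dim_{\caln(G)}(\overline{\im(f[t])}) \le m$. Applying Lemma~\ref{lem:det_estimate_in_terms_of_norm}, $\det_{\caln(G)}(f[t]) \le \|f[t]\|^{\dim_{\caln(G)}(\overline{\im(f[t])})}$; taking logarithms, $\rho(t) = \ln \det_{\caln(G)}(f[t]) \le \dim_{\caln(G)}(\overline{\im(f[t])}) \cdot \ln\|f[t]\|$ when $\|f[t]\| \ge 1$, and $\rho(t) \le 0$ when $\|f[t]\| \le 1$ (here one uses that $\det_{\caln(G)}(f[t]) \le \|f[t]\|^{\dim(\cdots)} \le 1$ in that case, or just the monotonicity $\det \le \|f[t]\|^{0} \cdot (\text{something} \le 1)$; more carefully, if $\|f[t]\| \le 1$ then $\|f[t]\|^{k} \le 1$ for any $k \ge 0$, so $\rho(t) \le 0$). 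In either case $\rho(t) \le m \cdot \max\{0, \ln\|f[t]\|\} \le m \cdot \max\{0, \ln(\|f_0\| + \|f_1\|)\}$, which is the first claimed bound.

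Second, for $t \ge 1$: the idea is to factor out the dominant term $t f_1$. Write $f[t] = t \cdot (f_1 + t^{-1} f_0)$ and use the scaling behaviour of the Fuglede-Kadison determinant under multiplication by the scalar $t$: $\det_{\caln(G)}(t \cdot g) = t^{\dim_{\caln(G)}(\overline{\im(g)})} \cdot \det_{\caln(G)}(g)$ for a bounded operator $g$ (this is a standard property, e.g.\ from~\cite[Theorem~3.14]{Lueck(2002)}; note $\overline{\im(tg)} = \overline{\im(g)}$ for $t > 0$). Here $g = g[t] := f_1 + t^{-1} f_0$, and since $f[t]$ is $L^2$-$\det$-acyclic with the same image closure as $g[t]$ scaled, one gets $\dim_{\caln(G)}(\overline{\im(g[t])}) = \dim_{\caln(G)}(\overline{\im(f[t])})$. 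The subtle point is that I want the exponent of $t$ to be $\dim_{\caln(G)}(\overline{\im(f_1)})$, not $\dim_{\caln(G)}(\overline{\im(f[t])})$; but the $\det$-$L^2$-acyclicity hypothesis forces $f[t]$ to have full image closure in an appropriate sense compatible with $f_1$ — more precisely one should argue (perhaps via Theorem~\ref{the:Twisted_Approximation_inequality} or the structure of these operators, or simply because $f[t]$ being $\det$-acyclic for all $t$ including behaviour at infinity pins the rank down to that of $f_1$) that $\dim_{\caln(G)}(\overline{\im(f[t])}) \le \dim_{\caln(G)}(\overline{\im(f_1)}) + (m - \dim_{\caln(G)}(\overline{\im(f_1)}))$ is not quite enough; instead I would use the cleaner route: estimate $\det_{\caln(G)}(g[t]) \le \|g[t]\|^{\dim_{\caln(G)}(\overline{\im(g[t])})} \le \|g[t]\|^m$ with $\|g[t]\| = \|f_1 + t^{-1} f_0\| \le \|f_1\| + \|f_0\| \le 2\|f_0\| + \|f_1\|$, and separately extract the exponent of $t$ as exactly $\dim_{\caln(G)}(\overline{\im(f_1)})$ by a limiting/lower-semicontinuity argument on $\overline{\im(g[t])}$ as $t \to \infty$ (since $g[t] \to f_1$ in operator norm). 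Combining, $\rho(t) = \dim_{\caln(G)}(\overline{\im(f_1)}) \cdot \ln t + \ln\det_{\caln(G)}(g[t]) \le \dim_{\caln(G)}(\overline{\im(f_1)}) \cdot \ln t + m \cdot \max\{0, \ln(2\|f_0\| + \|f_1\|)\}$, which is the second bound.

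Finally, the three $\limsup$/$\liminf$ assertions follow immediately by dividing the two displayed inequalities by $\ln t$ and letting $t \to \infty$ (respectively $t \to 0$, where the first bound shows $\rho(t)/\ln(t) \ge (\text{const})/\ln(t) \to 0^-$ since $\ln t < 0$, giving $\liminf_{t\to 0} \rho(t)/\ln(t) \ge 0$), and then subtracting. The main obstacle I anticipate is the $t \ge 1$ case: getting the coefficient of $\ln t$ to be precisely $\dim_{\caln(G)}(\overline{\im(f_1)})$ rather than something involving $\overline{\im(f[t])}$ requires care about how the image closure of $f_1 + t^{-1} f_0$ behaves as $t \to \infty$, and one must invoke lower semicontinuity of $\dim_{\caln(G)}(\overline{\im(\cdot)})$ together with the $\det$-$L^2$-acyclicity hypothesis (which guarantees $\ker$ has dimension zero, hence $\overline{\im}$ has dimension $m$ for $f[t]$, but the subtlety is transferring this to $f_1$) — this is exactly where the improvement over~\cite[Proposition~9.5]{Dubois-Friedl-Lueck(2014Alexander)} lies.
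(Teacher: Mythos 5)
Your argument for $t \le 1$ and the passage from the two displayed estimates to the $\limsup$/$\liminf$ statements are correct and agree with the paper. The gap is in the $t \ge 1$ case. Factoring $f[t] = t \cdot g[t]$ with $g[t] = f_1 + t^{-1}f_0$ and applying the scaling rule ${\det}_{\caln(G)}(t\cdot g) = t^{\dim_{\caln(G)}(\overline{\im(g)})}\cdot {\det}_{\caln(G)}(g)$ produces the exponent $\dim_{\caln(G)}(\overline{\im(g[t])}) = \dim_{\caln(G)}(\overline{\im(f[t])})$, and this equals $m$ for every finite $t$, because the hypothesis that $f[t]$ is $L^2$-$\det$-acyclic makes $f[t]$ a weak isomorphism. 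So this route can only yield $\rho(t) \le m\cdot\ln(t) + \dots$, which is strictly weaker than the asserted bound whenever $\dim_{\caln(G)}(\overline{\im(f_1)}) < m$ --- and in the application inside the proof of Theorem~\ref{the:lower_bound} one has $f_1 = \mu(\gamma)\cdot\id_{L^2(G)^k}\oplus 0_{L^2(G)^m}$, where exactly this discrepancy encodes the Thurston norm. The limiting argument you sketch cannot repair this: $\dim_{\caln(G)}(\overline{\im(g[t])})$ is constantly equal to $m$ on $(1,\infty)$, and semicontinuity of the von Neumann rank under norm convergence goes the wrong way for your purpose (the rank of the limit operator $f_1$ is bounded above by the $\liminf$ of the ranks of the $g[t]$, not the other way around; compare $t^{-1}\cdot\id \to 0$).

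The missing idea is to rescale only in the direction of $\overline{\im(f_1)}$ instead of by the global scalar $t$. With respect to the orthogonal decomposition $L^2(G)^m = \overline{\im(f_1)}\oplus\overline{\im(f_1)}^{\perp}$ let $d_t$ be the block-diagonal operator which is $t^{-1}\cdot\id$ on the first summand and $\id$ on the second. Then ${\det}_{\caln(G)}(d_t) = t^{-\dim_{\caln(G)}(\overline{\im(f_1)})}$, and multiplicativity of the Fuglede--Kadison determinant gives ${\det}_{\caln(G)}(f[t]) = t^{\dim_{\caln(G)}(\overline{\im(f_1)})}\cdot{\det}_{\caln(G)}(d_t\circ f[t])$. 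Since the image of $f_1$ lies in the first summand, the block components of $f_1$ landing in $\overline{\im(f_1)}^{\perp}$ vanish, so in $d_t\circ(t\cdot f_1)$ the factor $t$ is cancelled exactly, while $d_t\circ f_0$ only has one block row multiplied by $t^{-1}\le 1$; a triangle-inequality estimate then gives $\|d_t\circ f[t]\|\le 2\cdot\|f_0\|+\|f_1\|$, and Lemma~\ref{lem:det_estimate_in_terms_of_norm} applied to $d_t\circ f[t]$ yields the stated bound. This block rescaling is the content of the paper's proof and is precisely the point where it improves on the earlier argument you cite.
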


\begin{proof}
It suffices to prove the two inequalities for $\rho(t)$, then the other claims follow.

We begin with the case $t \le 1$. We get from Lemma~\ref{lem:det_estimate_in_terms_of_norm}
\begin{eqnarray*}
{\det}_{\caln(G)}(f[t])
& \le  &
\|f[t]\|^{\dim_{\caln(G)}(\overline{\im(f[t])})}.
\end{eqnarray*}
If $\|f[t]\| \le 1$, this implies ${\det}_{\caln(G)}(f[t]) \le 1$ and the claim follows.
Hence it remains to treat the case $\|f[t]\| >1$.
 Then we get because of
$\dim_{\caln(G)}(\overline{\im(f)}) \le m$ that
\begin{eqnarray*}
{\det}_{\caln(G)}(f[t])
& \le &
\|f[t]\|^m
\\
& = &
\|f_0 + t \cdot f_1\|^m
\\
& \le &
\left(\|f_0\| + t \cdot \|f_1\|\right)^m
\\
& \stackrel{t \le 1}{\le} &
\left(\|f_0\| +\|f_1\|\right)^m.
\end{eqnarray*}
Next we consider the case $t\ge 1$. We have the orthogonal decomposition
\[
L^2(G)^m =  \overline{\im(f_1)} \oplus \overline{\im(f_1)}^{\perp}.
\]
With respect to this decomposition we get for any bounded $G$-equivariant operator 
$g \colon L^2(G)^m \to L^2(G)^m$ the  decomposition 
\[
g = \begin{pmatrix}g^{(1,1)} & g^{(1,2)} \\ g^{(2,1)} & g^{(2,2)} \end{pmatrix}.
\] 
We estimate  for $t \ge 1$ using~\cite[Theorem~3.14~(1) and~(2) on page~128]{Lueck(2002)}
\[
\begin{array}{rcl}
\frac{{\det}_{\caln(G)}(f[t])}{t^{\dim_{\caln(G)}(\overline{\im(f_1)})}}
& = & 
{\det}_{\caln(G)} \left(\begin{pmatrix} t^{-1} \cdot \id & 0 \\ 0 & \id \end{pmatrix}\right) \cdot {\det}_{\caln(G)}(f[t])
\\
& = & 
{\det}_{\caln(G)} \left(\begin{pmatrix} t^{-1} \cdot \id & 0 \\ 0 & \id \end{pmatrix} \circ f[t]\right)
\\
&  \stackrel{Lemma~\ref{lem:det_estimate_in_terms_of_norm}}{\le} &
\left\|\begin{pmatrix} t^{-1} \cdot \id & 0 \\ 0 & \id \end{pmatrix} \circ f[t] \right\|^{m}.
\end{array}
\]
If $\left\|\begin{pmatrix} t^{-1} \cdot \id & 0 \\ 0 & \id \end{pmatrix} \circ f[t] \right\| \le 1$
the claim is obviously true. Hence it remains to treat the case
$\left\|\begin{pmatrix} t^{-1} \cdot \id & 0 \\ 0 & \id \end{pmatrix} \circ f[t] \right\| \ge 1$.
Then we get 
\begin{eqnarray*}
\lefteqn{\frac{{\det}_{\caln(G)}(f[t])}{t^{\dim_{\caln(G)}(\overline{\im(f_1)})}}}
\\
& \le  & 
\left\|\begin{pmatrix} t^{-1} \cdot \id  & 0 \\ 0 & \id \end{pmatrix} (f_0 +t \cdot f_1)\right\|^m
\\
& = & 
\left\|\begin{pmatrix} t^{-1}  f_0^{(1,1)} &  t^{-1}  f_0^{(1,2)} \\ f_0^{(2,1)}  & f_0^{(2,2)}  \end{pmatrix} +
\begin{pmatrix} f_1 ^{(1,1)} & f_1^{(1,2)} \\ 0 & 0  \end{pmatrix}
\right\|^m
\\
& \le  & 
\left(t^{-1} \cdot  \left\|\begin{pmatrix} f_0^{(1,1)} & f_0^{(1,2)} \\ 0 & 0 \end{pmatrix} \right\|
+ \left\|\begin{pmatrix} 0 & 0 \\  f_0^{(2,1)}  & f_0^{(2,2)}  \end{pmatrix} \right\|
+ \left\|\begin{pmatrix} f_1 ^{(1,1)} & f_1^{(1,2)} \\ 0 & 0  \end{pmatrix} \right\|\right)^m
\\
& \stackrel{t^{-1} \le 1}{\le}  & 
\left(\left\|\begin{pmatrix} f_0^{(1,1)} & f_0^{(1,2)} \\ 0 & 0   \end{pmatrix}\right\| 
+ \left\|\begin{pmatrix} 0 & 0 \\  f_0^{(2,1)}  & f_0^{(2,2)}  \end{pmatrix} \right\|
+ \left\|\begin{pmatrix} f_1 ^{(1,1)} & f_1^{(1,2)} \\ 0 & 0  \end{pmatrix} \right\|\right)^m
\\
& \le  & 
\left(\left\|\begin{pmatrix} f_0^{(1,1)} & f_0^{(1,2)} \\    f_0^{(2,1)}  & f_0^{(2,2)}   \end{pmatrix}\right\| 
+ \left\|\begin{pmatrix} f_0^{(1,1)} & f_0^{(1,2)}  \\  f_0^{(2,1)}  & f_0^{(2,2)}  \end{pmatrix} \right\|
+ \left\|\begin{pmatrix} f_1 ^{(1,1)} & f_1^{(1,2)} \\ 0 & 0  \end{pmatrix} \right\|\right)^m
\\
& = & 
\left(2 \cdot \|f_0\| + \|f_1\|\right)^m.
\end{eqnarray*}
This finishes the proof of Lemma~\ref{lem:degree_estimate}.
\end{proof}

For an element $x = \sum_{g \in G} r_g \cdot g$ in $\IC G$ define $|x|_1 := \sum_{g \in G}
|r_g|$. Given a matrix $A \in M_{r,s}(\IC G)$ define
\begin{eqnarray}
\|A\|_1 & = & r\cdot s  \cdot \max\big\{ |a_{j,k}|_1 \,\big|\, 1 \le j \le r, 1 \le k \le s\big\}.
\label{||A||_1}
\end{eqnarray}

The next theorem can be viewed as saying, that in the acyclic case the degree of the
$\phi$-twisted $L^2$-torsions gives lower bounds on the Thurston norm. This result is thus
an analogue of the classical fact, that the degree of the Alexander polynomial gives a
lower bound on the knot genus. We refer to~\cite{Dubois-Friedl-Lueck(2015flavors)} for a
detailed discussion of various twisted generalizations of the Alexander polynomial of a
knot and their relations to the Thurston norm.

\begin{theorem}[Lower bound]
\label{the:lower_bound}
Let $M$ be an irreducible $3$-manifold with infinite fundamental group $\pi$.  Let
$\mathfrak{s}\in \spinc(M)$.  Then for any $\phi\in H^1(M;\IQ)$ there exists a constant
$D\ge 0$ such that for any \largehomoblank homomorphism $\mu\colon \pi_1(M)\to G$, for
which $(M,\mu)$ is $L^2$-acyclic, we have
\begin{eqnarray*}
\tmfrac{1}{2}\big(\phi(c_1(\mathfrak{s}))+  x_M(\phi)\big)  \cdot \ln(t)  - D  & \le & \rho^{(2)}(M,\mathfrak{s};\mu,\phi)(t)  
\quad  \text{for}\; t  \le 1;
\\
\tmfrac{1}{2}\big(\phi(c_1(\mathfrak{s}))- x_M(\phi)\big)  \cdot \ln(t)  - D
& \le &
\rho^{(2)}(M,\mathfrak{s};\mu,\phi)(t)    
\quad  \text{for}\; t \ge 1.
\end{eqnarray*}
\end{theorem}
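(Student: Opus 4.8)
The plan is to strip away, one at a time, the dependence on rescaling $\phi$, on the $\spinc$-structure and on the sign of $\ln(t)$, and then to reduce the whole statement to a single upper bound for a Fuglede--Kadison determinant of exactly the shape handled by Lemma~\ref{lem:degree_estimate}. By the scaling formula Theorem~\ref{the:Properties_of_the_twisted_L2-torsion_function}~(4), the homogeneity \eqref{scaling_Thurston_norm} of the Thurston norm and the linearity of $\phi\mapsto\phi(c_1(\mathfrak s))$, the two inequalities for $r\phi$ are equivalent to those for $\phi$ after the substitution $t\rightsquigarrow t^{r}$; so we may assume $\phi\in H^1(M;\IZ)$ and, dividing out its divisibility, that $\phi$ is primitive, so $\phi\colon\pi\to\IZ$ is onto (the case $\phi=0$ is trivial, since then $x_M(\phi)=0=\phi(c_1(\mathfrak s))$ and $\rho^{(2)}(M,\mathfrak s;\mu,0)$ is a constant function by the $L^2$-acyclicity of $(M,\mu)$). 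Next, by Theorem~\ref{the:Properties_of_the_twisted_L2-torsion_function}~(2) and \eqref{equ:c1he}, replacing $\mathfrak s$ by $h\mathfrak s$ adds the same quantity $\phi(h)\ln(t)$ to each side of each inequality, so it suffices to prove them for one convenient $\spinc$-structure, and the constant obtained there works for all of them. Finally, by the symmetry formula Theorem~\ref{the:Properties_of_the_twisted_L2-torsion_function}~(5) the inequality for $0<t\le1$ follows from the one for $t\ge1$ by the substitution $t\rightsquigarrow t^{-1}$, so it is enough to treat $t\ge1$.

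Fix a Thurston-norm-minimizing properly embedded surface $S$ dual to $\phi$; as it realizes the norm it has no $S^2$- or $D^2$-components, so $x_M(\phi)=\chi_-(S)=-\chi(S)$. Cutting $M$ along $S$ as in McMullen's proof of \cite[Theorem~5.1]{McMullen(2002)} produces a CW structure on $M$ in which every cellular boundary has $\phi$-degree in $\{0,1\}$, with the degree-$1$ part supported on a CW structure of $S$, and whose fundamental family of cells represents a $\spinc$-structure $\mathfrak s_0$ with $\phi(c_1(\mathfrak s_0))=-x_M(\phi)$. Feeding this CW structure and $\mathfrak s_0$ into Theorem~\ref{the:calculation_of_L2-torsion_from_a_presentation} gives a fixed square matrix $A$ over $\IZ\pi$ and elements $s,s'\in\pi$ with
\[
\rho^{(2)}(M,\mathfrak s_0;\mu,\phi)(t)=-\ln\bigl({\det}_{\caln(G)}(\Lambda^G\circ\eta_{\phi^*\IC_t}(r_A))\bigr)+\eta(t),
\]
and, since the $\phi$-degrees occurring in $A$ lie in $\{0,1\}$, the twisted operator is affine-linear in $t$,
\[
\Lambda^G\circ\eta_{\phi^*\IC_t}(r_A)=f_0+t\cdot f_1 ,
\]
where $f_0=\Lambda^G(r_{\mu(A_0)})$ and $f_1=\Lambda^G(r_{\mu(A_1)})$ come from the $\phi$-degree-$0$ and $\phi$-degree-$1$ parts $A_0,A_1$ of $A$, and $A_1$ has nonzero rows only among the $1$-cells of the chosen CW structure of $S$.

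Since $(M,\mu)$ is $L^2$-acyclic, Theorem~\ref{the:calculation_of_L2-torsion_from_a_presentation}~(a),(b) makes $f[t]:=f_0+tf_1$ $L^2$-$\det$-acyclic for every $t>0$, so Lemma~\ref{lem:degree_estimate} applies and yields, for $t\ge1$,
\[
\ln\bigl({\det}_{\caln(G)}(f[t])\bigr)\ \le\ \dim_{\caln(G)}\bigl(\overline{\im(f_1)}\bigr)\cdot\ln(t)+D',
\]
with $D'$ bounded in terms of $\|f_0\|,\|f_1\|$ and the size of $A$, hence in terms of the integral matrix $A$ alone and independent of $\mu$. Writing $\eta(t)=e\ln(t)$ for $t\ge1$ (so $e=|\phi(s)|+|\phi(s')|$ in the closed case and $e=|\phi(s)|$ otherwise) and using $\phi(c_1(\mathfrak s_0))=-x_M(\phi)$, the previous three displays reduce the theorem — for $\mathfrak s_0$, hence by the reductions above for every $\mathfrak s$ and every $t$ — to the single inequality
\[
\dim_{\caln(G)}\bigl(\overline{\im(f_1)}\bigr)\ \le\ x_M(\phi)+e .
\]
But $\dim_{\caln(G)}(\overline{\im(f_1)})=\dim_{\caln(G)}(\overline{\im(\Lambda^G(r_{\mu(A_1)}))})$ is at most the number of nonzero rows of $A_1$, hence at most the number of $1$-cells of the chosen CW structure of $S$; taking this CW structure efficiently — possible because $S$ has no sphere or disk components — that number is $\le x_M(\phi)+e$, which finishes the proof. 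This last step, the only one in which $G$ enters, is uniform in $\mu$ precisely because the von Neumann dimension of $\overline{\im(\Lambda^G(r_{\mu(B)}))}$ never exceeds the number of nonzero rows of $B$.

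The three reductions and the appeal to Lemma~\ref{lem:degree_estimate} are routine. The real work is in the middle paragraph: constructing the $\phi$-adapted CW structure and, above all, verifying that its fundamental family of cells represents a $\spinc$-structure with $\phi(c_1(\mathfrak s_0))=-x_M(\phi)$. This couples Turaev's bijection between Euler structures and $\spinc$-structures (Subsection~\ref{sub:Base_refinement}), the homological meaning of $c_1$ encoded in \eqref{equ:c1he}, and Poincaré--Lefschetz duality — the source of the coefficient $\tfrac12$ in the statement — together with the bookkeeping of the boundary tori, which fixes $e$ and makes the count of $1$-cells of $S$ match $x_M(\phi)$ on the nose. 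A secondary but essential point is to keep $D'$ and the bound on $\dim_{\caln(G)}(\overline{\im(f_1)})$ expressed only through the fixed integral data $A,s,s'$, so that the final constant $D$ is genuinely independent of $\mu$.
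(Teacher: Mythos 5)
Your skeleton matches the paper's: reduce to primitive $\phi$ by scaling (the paper's Lemma~\ref{lem:primitiveisenough}), fix one convenient $\spinc$-structure, write $\rho^{(2)}$ as $-\ln\det_{\caln(G)}$ of an operator affine in $t$ plus a controlled $\eta(t)$, and feed it to Lemma~\ref{lem:degree_estimate}, whose bound on $\dim_{\caln(G)}(\overline{\im(f_1)})$ is the engine and whose constants depend only on $\|A\|_1$, hence not on $\mu$. But there is a genuine gap exactly where you announce that ``the real work'' lies: the entire middle paragraph is asserted, not proved. Concretely, you need (a) a presentation in which all $\phi$-degrees lie in $\{0,1\}$ and the degree-one part $f_1$ has $\dim_{\caln(G)}(\overline{\im(f_1)})\le x_M(\phi)+e$ with $e$ matching the coefficient of $\ln(t)$ in $\eta(t)$ \emph{on the nose}, and (b) the identity $\phi(c_1(\mathfrak s_0))=-x_M(\phi)$ for the induced $\spinc$-structure. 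Neither is routine: (a) is precisely the content of \cite[Proposition~9.1]{Dubois-Friedl-Lueck(2014Alexander)}, resting on \cite[Section~4]{Friedl(2014twisted)}, where the operator takes the form $\Lambda^G(r_{\mu(A)})+t\cdot\mu(\gamma)\cdot\id_{L^2(G)^k}\oplus 0_{L^2(G)^m}$ with $A$ over $\IZ[\ker\phi]$ and, crucially, $x_M(\phi)=k-l$ where $l$ enters through a prefactor $\max\{1,t\}^{-l}$ — the norm does not appear as a naive count of $1$-cells of a norm-minimizing surface but as a difference $k-l$; and (b) is a nontrivial parity/characteristic-class statement that you would have to establish for your particular CW structure. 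You also cannot simply invoke Theorem~\ref{the:calculation_of_L2-torsion_from_a_presentation} for a CW structure of your own choosing, since that theorem is proved for McMullen's specific presentation.

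The more instructive divergence is how the factor $\tfrac12$ arises. Your route uses the symmetry formula as a \emph{reduction} ($t\le1$ from $t\ge1$), which forces you to prove the sharp $t\ge1$ inequality with the correct $\spinc$-dependence already built in — hence the need for (b). The paper instead first proves the unsymmetrized pair $-D\le\rho(t)$ for $t\le1$ and $-x_M(\phi)\ln(t)-D\le\rho(t)$ for $t\ge1$ (both halves of Lemma~\ref{lem:degree_estimate} are used), then applies the symmetry formula to produce a second pair of lower bounds, and \emph{averages} the two pairs; the $\tfrac12$ is born in that averaging, and no computation of $\phi(c_1(\mathfrak s_0))$ is ever needed. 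If you want to salvage your argument without proving (b), adopt this averaging step; to salvage (a), you must either cite the Friedl/DFL presentation or reproduce its construction in full, including the accounting that yields $x_M(\phi)=k-l$.
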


In~\cite[Theorem~1.5]{Dubois-Friedl-Lueck(2014Alexander)} we proved the analogous statement 
under the extra assumption that $\mu\colon \pi_1(M)\to G$ is a homomorphism to a virtually abelian group. 


In the proof of Theorem~\ref{the:lower_bound} we will make use of the following elementary lemma.

\begin{lemma}\label{lem:primitiveisenough}
Let $M$ be an irreducible $3$-manifold with
infinite fundamental group  and let $\mathfrak{s}\in  \spinc(M)$. 
If the conclusion of Theorem~\ref{the:lower_bound} holds for all primitive $\phi\in H^1(M;\IZ)$,  then it holds for all $\phi\in H^1(M;\IQ)$. 
\end{lemma}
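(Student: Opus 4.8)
The plan is to reduce the rational case to the primitive integral case by exploiting the scaling behaviour of both sides of the asserted inequality. The key structural facts are: (i) the Thurston norm is homogeneous, $x_M(r\phi)=|r|\cdot x_M(\phi)$, as recorded in~\eqref{scaling_Thurston_norm}; (ii) the $\phi$-twisted $L^2$-torsion function scales by $\rho^{(2)}(M,\mathfrak{s};\mu,r\phi)(t)=\rho^{(2)}(M,\mathfrak{s};\mu,\phi)(t^r)$, by Theorem~\ref{the:Properties_of_the_twisted_L2-torsion_function}~(4); and (iii) the Chern-class term $\phi(c_1(\mathfrak{s}))$ is linear in $\phi$.

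First I would observe that any nonzero $\phi\in H^1(M;\IQ)$ can be written as $\phi = r\cdot\phi_0$ where $r\in\IQ_{>0}$ and $\phi_0\in H^1(M;\IZ)$ is primitive; the case $\phi=0$ is trivial since then both sides are $O(1)$ and one may take the constant appropriately (here $x_M(0)=0$ and $\phi(c_1(\mathfrak{s}))=0$, and $\rho^{(2)}$ is then bounded by Theorem~\ref{the:Properties_of_the_twisted_L2-torsion_function}~(1)). So assume $\phi=r\phi_0$ with $r>0$. By hypothesis there is a constant $D_0\ge 0$ (depending on $M$, $\mathfrak{s}$, $\phi_0$ but not on $\mu$) such that for all admissible $\mu$,
\[
\tfrac12\bigl(\phi_0(c_1(\mathfrak{s}))+x_M(\phi_0)\bigr)\ln(s)-D_0 \le \rho^{(2)}(M,\mathfrak{s};\mu,\phi_0)(s)\quad\text{for }s\le 1,
\]
and the companion inequality for $s\ge 1$. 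Now substitute $s=t^r$: for $t\le 1$ we have $s=t^r\le 1$, and using (ii), $\rho^{(2)}(M,\mathfrak{s};\mu,\phi)(t)=\rho^{(2)}(M,\mathfrak{s};\mu,\phi_0)(t^r)$, while $\ln(s)=r\ln(t)$, so the left-hand side becomes $\tfrac12\bigl(\phi_0(c_1(\mathfrak{s}))+x_M(\phi_0)\bigr)r\ln(t)-D_0 = \tfrac12\bigl(r\phi_0(c_1(\mathfrak{s}))+r\,x_M(\phi_0)\bigr)\ln(t)-D_0 = \tfrac12\bigl(\phi(c_1(\mathfrak{s}))+x_M(\phi)\bigr)\ln(t)-D_0$, using linearity of the Chern term and~\eqref{scaling_Thurston_norm} with $r>0$. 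The same substitution handles $t\ge 1$. Thus the conclusion holds for $\phi$ with $D:=D_0$.

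There is essentially no obstacle here — the lemma is genuinely elementary, as the text promises. The only minor point to get right is the sign bookkeeping: one must restrict to $r>0$ so that $|r|=r$ in~\eqref{scaling_Thurston_norm} and so that $t\le 1\iff t^r\le 1$; if one starts with a negative rational multiple one first replaces $\phi_0$ by $-\phi_0$, which is still primitive integral, and notes that the hypothesis is assumed for \emph{all} primitive integral classes. One should also note that the constant $D$ depends only on the data $(M,\mathfrak{s},\phi)$ and not on $\mu$, which is preserved by the argument since $D_0$ had this property.
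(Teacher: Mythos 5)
Your argument is correct and is essentially the paper's own proof: both reduce to a primitive integral class via a positive rational rescaling, then transport the inequalities using the scaling property of Theorem~\ref{the:Properties_of_the_twisted_L2-torsion_function}~(4), the homogeneity~\eqref{scaling_Thurston_norm} of the Thurston norm, and linearity of $\phi \mapsto \phi(c_1(\mathfrak{s}))$, noting that $t\le 1 \iff t^r\le 1$ for $r>0$ and that the constant $D$ remains independent of $\mu$. The only difference is notational (you write $\phi=r\phi_0$ with $\phi_0$ primitive, while the paper chooses $r$ so that $r\phi$ is primitive), and both treat the trivial class in the same cursory way.
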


\begin{proof}
  If $\phi$ is trivial, then clearly there is nothing to prove.  So let $\phi\in
  H^1(M;\IQ)$ be non-zero. We pick an $r\in \IQ_{> 0}$ such that $r\phi\in H^1(M;\IZ)$ is
  primitive. We denote by $D$ the constant of Theorem~\ref{the:lower_bound} corresponding
  to the primitive class $r\phi$.

From
Theorem~\ref{the:Properties_of_the_twisted_L2-torsion_function}~(4) and
from~\eqref{scaling_Thurston_norm} we get for any \largehomoblank homomorphism $\mu\colon \pi_1(M)\to G$,
for which  $(M,\mu)$ is $L^2$-acyclic, that 
\begin{align*}
\begin{array}{rcl}
\hspace{3cm}\rho^{(2)}(M,\mathfrak{s};\mu,\phi)(t) & = & \rho^{(2)}(M,\mathfrak{s};\mu,r\phi)\big(t^{\frac{1}{r}}\big);
\\
x_M(r\phi) & = & r \cdot x_M(\phi).
\intertext{Combining these equalities with the elementary equalities}
\ln\big(t^{\frac{1}{r}}\big)&=&\frac{1}{r}\ln(t);\\
(r\phi)(c_1(\mathfrak{s}))&=&r\cdot \phi(c_1(\mathfrak{s})),
\end{array}
\end{align*}
it is straightforward to see that the desired inequalities also hold for $\mu$ and $\phi$.
\end{proof}

\begin{proof}[Proof of Theorem~\ref{the:lower_bound}]
By Lemma~\ref{lem:primitiveisenough} it suffices to prove the statement for every primitive  $\phi\in H^1(M;\IZ)$. 
We start out with the following claim.

\begin{claim}
  Given a primitive $\phi\in H^1(M;\IZ)$ there exists an $\mathfrak{s}\in \spinc(M)$ such
  that for any \largehomoblank homomorphism $\mu\colon \pi_1(M)\to G$, for which $(M,\mu)$
  is $L^2$-acyclic, the following inequalities hold
\begin{eqnarray*}
-D  & \le & \rho^{(2)}(M,\mathfrak{s};\mu,\phi)(t)  
\quad  \text{for}\; t  \le 1;
\\
- x_M(\phi)  \cdot \ln(t)  - D 
& \le &
\rho^{(2)}(M,\mathfrak{s};\mu,\phi)(t)    
\quad  \text{for}\; t \ge 1.
\end{eqnarray*}
\end{claim}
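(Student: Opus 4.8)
\emph{Strategy.} I will deduce the Claim from a $\mu$-uniform upper bound for ${\det}_{\caln(G)}$ of a single $\phi$-twisted square matrix over $\IZ\pi$. By Lemma~\ref{lem:primitiveisenough} we may assume $\phi\in H^1(M;\IZ)$ is primitive. Fix a connected, incompressible, Thurston-norm-minimising surface $F\subset M$ dual to $\phi$ (possible since $\phi$ is primitive), so that $-\chi(F)=\chi_-(F)=x_M(\phi)$; when $x_M(\phi)=0$, $F$ is a torus or an annulus and the argument below specialises harmlessly, giving the Claim with $x_M(\phi)=0$. Let $\mathfrak{s}=\mathfrak{s}_F\in\spinc(M)$ be the $\spinc$-structure that, under Turaev's bijection recalled in Subsection~\ref{sub:Base_refinement}, corresponds to a handle decomposition of $M$ adapted to $F$; computing $c_1$ from this decomposition gives $\phi(c_1(\mathfrak{s}_F))=\chi(F)=-x_M(\phi)$, which is exactly what is needed afterwards to combine the Claim with Theorem~\ref{the:Properties_of_the_twisted_L2-torsion_function}~(2).

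\emph{A surface-adapted presentation.} Cutting $M$ along $F$ gives a compact manifold $N$ and, since $F$ is non-separating, exhibits $\pi=\pi_1(M)$ as the HNN extension $\pi_1(N)\ast_{\pi_1(F)}$ with a stable letter $s$; as $[F]$ is Poincar\'e dual to $\phi$, the homomorphism $\phi$ vanishes on $\pi_1(N)$ and on $\pi_1(F)$ while $\phi(s)=1$. Equip $N$ with a handle decomposition that restricts near $F$ to a standard one of $F$ with a single $0$-handle, $1-\chi(F)$ one-handles and, if $F$ is closed, a single $2$-handle, and that has no $0$- or $3$-handles of its own. Then $M=N\cup(F\times[0,1])$ acquires a handle decomposition whose cells are those of $N$ together with exactly one new cell of dimension $\dim(c)+1$ for each cell $c$ of $F$, the new cell over $c$ crossing $F$ and hence carrying $s$ in its attaching data. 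Following the proof of Theorem~\ref{the:calculation_of_L2-torsion_from_a_presentation} --- that is, the Fox-calculus computation of \cite[Proof of Theorem~5.1]{McMullen(2002)} and \cite[Theorem~2.4]{Lueck(1994a)} together with the row-and-column deletion of \cite[Lemma~3.2]{Dubois-Friedl-Lueck(2014Alexander)} --- but with this surface-adapted presentation, and choosing the deleted column among the $2$-handles crossing $F$ (and a second such column when $F$ is closed) and the deleted row to be the $1$-handle lying over the $0$-handle of $F$ (a loop hitting $F$ exactly once), we obtain a square matrix $A\in M_{m,m}(\IZ\pi)$ such that: every group element occurring in an entry of $A$ has $\phi$-value $0$ or $1$; the entries of $\phi$-value $1$ occupy at most $(1-\chi(F))-1=x_M(\phi)$ of the columns of $A$; and
\[
\rho^{(2)}(M,\mathfrak{s}_F;\mu,\phi)(t)=-\ln\bigl({\det}_{\caln(G)}\bigl(\Lambda^G\circ\eta_{\phi^*\IC_t}(r_A)\bigr)\bigr)+\eta(t),\qquad \eta(t)\ge 0 .
\]
Consequently $\eta_{\phi^*\IC_t}(A)=A_0+t\cdot A_1$ with $A_0,A_1\in M_{m,m}(\IZ\pi)$ independent of $t$ and with $A_1$ supported in at most $x_M(\phi)$ columns.

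\emph{The estimate.} Set $f_0:=\Lambda^G(r_{\mu(A_0)})$ and $f_1:=\Lambda^G(r_{\mu(A_1)})$, so that $f[t]:=f_0+t\cdot f_1=\Lambda^G\circ\eta_{\phi^*\IC_t}(r_A)$, which is $L^2$-$\det$-acyclic for every $t>0$ by Theorem~\ref{the:calculation_of_L2-torsion_from_a_presentation}. Lemma~\ref{lem:degree_estimate} gives
\[
\ln\bigl({\det}_{\caln(G)}(f[t])\bigr)\le m\cdot\max\{0,\ln(\|f_0\|+\|f_1\|)\}\qquad\text{for }t\le 1
\]
and
\[
\ln\bigl({\det}_{\caln(G)}(f[t])\bigr)\le \dim_{\caln(G)}\bigl(\overline{\im(f_1)}\bigr)\cdot\ln(t)+m\cdot\max\{0,\ln(2\|f_0\|+\|f_1\|)\}\qquad\text{for }t\ge 1 .
\]
Now $\|f_i\|=\|\Lambda^G(r_{\mu(A_i)})\|\le\|A_i\|_1$, a bound independent of $\mu$ and of $G$; and since $A_1$ has at most $x_M(\phi)$ non-zero columns, $\im(r_{\mu(A_1)})$ lies in a sum of at most $x_M(\phi)$ coordinate copies of $L^2(G)$, whence $\dim_{\caln(G)}(\overline{\im(f_1)})\le x_M(\phi)$. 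Putting $D:=m\cdot\max\{0,\ln(2\|A_0\|_1+\|A_1\|_1)\}$, which depends only on $M$, $\mathfrak{s}_F$ and $\phi$, we conclude $\ln{\det}_{\caln(G)}(f[t])\le D$ for $t\le 1$ and $\ln{\det}_{\caln(G)}(f[t])\le x_M(\phi)\cdot\ln(t)+D$ for $t\ge 1$. Feeding this into the displayed formula for $\rho^{(2)}$ and using $\eta(t)\ge 0$ yields $-D\le\rho^{(2)}(M,\mathfrak{s}_F;\mu,\phi)(t)$ for $t\le 1$ and $-x_M(\phi)\cdot\ln(t)-D\le\rho^{(2)}(M,\mathfrak{s}_F;\mu,\phi)(t)$ for $t\ge 1$, i.e.\ the Claim with $\mathfrak{s}=\mathfrak{s}_F$.

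\emph{Main obstacle.} The real work is entirely in the second paragraph: arranging the handle/Fox presentation so that, after the row-and-column deletion, the $t$-linear part of the relevant square matrix occupies at most $x_M(\phi)$ columns while the deleted row contributes only the non-negative error $\eta(t)$. This is precisely where Thurston-norm minimality of $F$ enters --- the $t$-linear part is forced onto the $1-\chi(F)$ one-handles of $F$, one of which is consumed by the deletion, leaving $-\chi(F)=x_M(\phi)$; in the closed case one must in addition track the $2$-handle of $F$ and the second summand of $\eta(t)$, with the count coming out the same. It is a careful but essentially routine refinement of the presentations already underlying Theorem~\ref{the:calculation_of_L2-torsion_from_a_presentation}. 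After that, the only analytic input is the elementary determinant estimates of Lemmas~\ref{lem:det_estimate_in_terms_of_norm} and~\ref{lem:degree_estimate}, together with the $G$-independent operator-norm bound $\|\Lambda^G(r_x)\|\le|x|_1$, which is exactly what makes the constant $D$ uniform in $\mu$.
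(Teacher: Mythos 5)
Your overall architecture is the same as the paper's: produce one square matrix over $\IZ[\pi]$ whose $\phi$-twisted determinant computes $\rho^{(2)}(M,\mathfrak{s};\mu,\phi)$ up to a controlled error, write the twisted matrix as $f_0+t\cdot f_1$, and feed it into Lemma~\ref{lem:degree_estimate} together with the $\mu$-independent bound $\|\Lambda^G(r_{\mu(A_i)})\|\le\|A_i\|_1$; your third paragraph is essentially the paper's argument verbatim. The gap is the central combinatorial claim of your second paragraph, namely that after the deletion the $t$-linear part $A_1$ is supported in at most $x_M(\phi)=-\chi(F)$ rows or columns. In the surface-adapted presentation the entries of $\phi$-value $1$ are carried by the $2$-cells $c\times I$ crossing $F$ (the HNN relators), of which there are $1-\chi(F)$ when $\partial F\ne\emptyset$ and $2-\chi(F)$ when $F$ is closed. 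But the deletion performed in Theorem~\ref{the:calculation_of_L2-torsion_from_a_presentation}~(1) removes a \emph{generator} (the $(a-1)\times a$ matrix of $\partial_2$ is made square by dropping a $1$-cell, not a relator), so none of the crossing $2$-cells disappears; in the closed case at most one relator is deleted. In both cases the $t$-part survives in at least $1-\chi(F)=x_M(\phi)+1$ rows. This is not a bookkeeping artifact: for a mapping torus with fiber $F$ the determinant in question is essentially ${\det}_{\caln(G)}\bigl(\Lambda^G(I-t\cdot s\cdot h_*)\bigr)$ on $C_1(\widetilde{F})$, which grows like $t^{1-\chi(F)}$ as $t\to\infty$, so by your own Lemma~\ref{lem:degree_estimate} one must have $\dim_{\caln(G)}(\overline{\im(f_1)})\ge 1-\chi(F)>x_M(\phi)$. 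Hence no choice of deletion can make your claimed rank bound true.

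The repair is exactly the term you throw away. The error $\eta(t)=\max\{0,|\phi(s)|\ln t\}$ (plus a second such term in the closed case) attached to the deleted cells is not merely non-negative: for $t\ge 1$ it contributes $+\ln(t)$ (resp.\ $+2\ln(t)$), which precisely cancels the excess rank $1$ (resp.\ $2$). This is how the paper argues: it invokes \cite[Proposition~9.1]{Dubois-Friedl-Lueck(2014Alexander)} to write $\rho(\mu,\phi)(t)=-\ln\bigl(\max\{1,t\}^{-l}\cdot{\det}_{\caln(G)}(\Lambda^G(r_{\mu(A)})+t\cdot\mu(\gamma)\cdot\id_{L^2(G)^k}\oplus 0_{L^2(G)^m})\bigr)$ with $A$ over $\IZ[\ker(\phi)]$, $f_1$ of rank exactly $k$, and $x_M(\phi)=k-l$; Lemma~\ref{lem:degree_estimate} then gives $\rho\ge l\ln t-k\ln t-D=-x_M(\phi)\ln t-D$ for $t\ge 1$. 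Your two mistakes (overstating the rank bound by $l$ and discarding $\eta(t)=l\cdot\max\{0,\ln t\}$) cancel, so your proof closes once you keep $\eta(t)$ and prove only the correct bound $\dim_{\caln(G)}(\overline{\im(f_1)})\le 1-\chi(F)$ (resp.\ $2-\chi(F)$); as written, however, the key step is false.
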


In the following we abbreviate
\[
\rho(\mu,\phi) = \rho^{(2)}(M,\mathfrak{s};\mu,\phi).
\]
We conclude by inspecting the proof of~\cite[Proposition~9.1 in
Section~9.1]{Dubois-Friedl-Lueck(2014Alexander)} which is based
on~\cite[Section~4]{Friedl(2014twisted)}, that there exists an $\mathfrak{s}\in
\spinc(M)$, integers $k,l,m$ with $k,l \ge 0$ and $x_M(\phi) = k-l$, an element $\gamma
\in \pi$ with $\phi(\gamma) = 1$, a matrix $A \in M_{k+m,k+m}(\IZ K)$ for $K =
\ker(\phi)$, such that for any \largehomoblank homomorphism $\mu\colon \pi_1(M)\to G$,
for which  $(M,\mu)$ is $L^2$-acyclic, the following equality holds
\[
\rho(\mu,\phi)(t) = - \ln\left( \max\{1,t\}^{-l} \cdot {\det}_{\caln(G)}
\left(\Lambda^G(r_{\mu(A)}) + t \cdot \mu(\gamma) \cdot \id_{L^2(G)^k} \oplus \;0_{L^2(G)^m} \right)\right).
\]
 This implies
\[
\rho(\mu,\phi)(t) = \begin{cases}
- \ln\left({\det}_{\caln(G)}\left(\Lambda^G(r_{\mu(A)}) + t \cdot \mu(\gamma) \cdot
\id_{L^2(G)^k} \oplus \;0_{L^2(G)^m}\right)\right)
& \text{for} \; t \le 1;
\\
l \cdot \ln(t) -  \ln\left( {\det}_{\caln(G)}\left(\Lambda^G(r_{\mu(A)}) 
+ t \cdot \mu(\gamma) \cdot \id_{L^2(G)^k} \oplus \;0_{L^2(G)^m} \right)\right)
& \text{for} \; t \ge  1.
\end{cases}
\]
Define
\begin{eqnarray*}
D& = & 
(k+m) \cdot \ln\bigl(2 \cdot (\|A\|_1 + 1\bigr)\bigr).
\end{eqnarray*}
Note that $D$  depends on $\phi$ but not on $\mu$.
We conclude from~\cite[ Lemma~6.3]{Lueck(2015twisting)}  and the monotonicity of $\ln$ that 
\begin{eqnarray*}
D 
& \ge  & 
(k+m) \cdot \ln\Bigl(2\cdot \|\Lambda^G(r_{\mu(A)})\| + \|\id_{L^2(G)^k} \oplus \;0_{L^2(G)^m}\|\Bigr)
\\
 & \ge & 
(k+m) \cdot \ln\Bigl( \|\Lambda^G(r_{\mu(A)})\| + \|\id_{L^2(G)^k} \oplus \;0_{L^2(G)^m}\|\Bigr).
\end{eqnarray*}
Therefore we conclude from Lemma~\ref{lem:degree_estimate} applied 
in the case  $f_0 = \Lambda^G(r_{\mu(A)})$ and $f_1 = \mu(\gamma) \cdot \id_{L^2(G)^k} \oplus \;0_{L^2(G)^m}$ that 
\[
\ln\Big( {\det}_{\caln(G)}\left(\Lambda^G(r_{\mu(A)}) + t \cdot \mu(\gamma) \cdot \id_{L^2(G)^k} \oplus \;0_{L^2(G)^m} \right)\Big) 
\le 
\begin{cases} D &  t \le 1;
\\
k \cdot \ln(t) + D & t \ge t.
\end{cases}
\]
This implies
\begin{eqnarray*}
- D  & \le & \rho(\mu,\phi)(t)  
\quad  \text{for}\; t \le 1;
\\
- (k-l) \cdot \ln(t)  - D 
& \le &
\rho(\mu,\phi)(t)   
\quad  \text{for}\; t \ge 1.
\end{eqnarray*}
Since $x_M(\phi)= k-l$, this implies the claim.

We now turn to the proof of the desired inequalities in the theorem. Using
Theorem~\ref{the:Properties_of_the_twisted_L2-torsion_function} (2) and 
equality~\eqref{equ:c1he} one can easily see that if the desired inequalities hold for one
$\spinc$-structure of $M$, then they also hold for all other $\spinc$-structures of
$M$. Now let $\mathfrak{s}\in \spinc(M)$ be the Euler structure from the claim. Then
\begin{eqnarray*}
-D  & \le & \rho^{(2)}(M,\mathfrak{s};\mu,\phi)(t)  
\quad  \text{for}\; t  \le 1;
\\
- x_M(\phi)  \cdot \ln(t)  - D 
& \le &
\rho^{(2)}(M,\mathfrak{s};\mu,\phi)(t)    
\quad  \text{for}\; t \ge 1.
\end{eqnarray*}
By Theorem~\ref{the:Properties_of_the_twisted_L2-torsion_function} (5) we also know that 
\[ \rho(M,\mathfrak{s};\mu,\phi)(t)=\phi(c_1(\mathfrak{s}))\ln(t)+\rho(M,\mathfrak{s};\mu,\phi)(t^{-1})\]
for all $t\in (0,\infty)$. 
Combining this equality with the above inequalities we obtain that 
\begin{eqnarray*}
(\phi(c_1(\mathfrak{s})) + x_M(\phi))  \cdot \ln(t)  - D  & \le & \rho^{(2)}(M,\mathfrak{s};\mu,\phi)(t)  
\quad  \text{for}\; t  \le 1;
\\
\phi(c_1(\mathfrak{s}))  \cdot \ln(t)  - D
& \le &
\rho^{(2)}(M,\mathfrak{s};\mu,\phi)(t)    
\quad  \text{for}\; t \ge 1.
\end{eqnarray*}
Adding the two inequalities for $t\leq 1$ and dividing by two, and doing the same for the
inequalities for $t\geq 1$ gives us the desired inequalities
\begin{eqnarray*}
\tmfrac{1}{2}\big(\phi(c_1(\mathfrak{s}))+  x_M(\phi)\big)  \cdot \ln(t)  - D  & \le & \rho^{(2)}(M,\mathfrak{s};\mu,\phi)(t)  
\quad  \text{for}\; t  \le 1;
\\
\tmfrac{1}{2}\big(\phi(c_1(\mathfrak{s}))- x_M(\phi)\big)  \cdot \ln(t)  - D
& \le &
\rho^{(2)}(M,\mathfrak{s};\mu,\phi)(t)    
\quad  \text{for}\; t \ge 1.
\end{eqnarray*}
\end{proof}


 \typeout{-------------------------   Section 4: Upper bounds -----------------------------------}

\section{Upper bounds}
\label{sec:Upper_bounds}

Before we can provide upper bounds on the Thurston norm we 
will need to prove one  preliminary result. This lemma will ensure that some
information which is only available at $0$ and $\infty$ leads to uniform estimates for all
$t > 0$. This will be a key ingredient when we want to apply approximation techniques.

\begin{lemma} \label{lem:uniform_estimate} Let $\phi \colon G \to \IZ$ be a non-trivial
  group homomorphism with finite kernel.  Let $A \in M_{m,m}(\IZ G)$ be a matrix such that
  $\Lambda^G(r_A) \colon L^2(G)^m \to L^2(G)^m$ is a weak isomorphism. Then
  $\Lambda^G \circ \eta_{\phi^*\IC_t}(r_A) \colon L^2(G)^m \to L^2(G)^m$ is
  $L^2$-$\det$-acyclic for any $t>0$.  Put
\[
\rho\colon(0,\infty) \to \IR, \quad  t \mapsto \ln\bigl({\det}_{\caln(G)}(\Lambda^G \circ \eta_{\phi^*\IC_t}(r_A))\bigr).
\]
Suppose that there are real numbers $C$ and $D$ and integers $k$ and $l$ such that
\begin{eqnarray*}
\lim_{t \to 0} \rho(t) - k \cdot \ln(t) = C;
\\
\lim_{t \to \infty} \rho(t) - l \cdot \ln(t) = D.
\end{eqnarray*}
Then we get for all $t > 0$
\begin{eqnarray*}
k \cdot \ln(t) + C
& \le & 
\rho(t);
\\
l \cdot \ln(t) +D
& \le & 
\rho(t).
\end{eqnarray*}
\end{lemma}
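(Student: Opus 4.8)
The statement is an elementary consequence of the \emph{convexity} of the function $t\mapsto\rho(e^t)$, together with the asymptotic information supplied by the hypotheses. So the plan has two parts: first establish that the Fuglede--Kadison determinant of the twisted differential behaves like a convex function in the logarithmic variable, and second, use that convexity to turn the two limiting statements into global lower bounds for all $t>0$.

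For the first part I would set $s=\ln(t)$ and $g(s):=\rho(e^s)=\ln\bigl({\det}_{\caln(G)}(\Lambda^G\circ\eta_{\phi^*\IC_{e^s}}(r_A))\bigr)$, and argue that $g$ is convex on $\IR$. The natural route is the same one used to handle Mahler measures and $L^2$-torsion of one-variable families: because $\phi$ has finite kernel $K$, we may pass (after possibly enlarging by the finite group, using the covering/induction behavior of Fuglede--Kadison determinants) to the case $G=K\rtimes\IZ$ with $\phi$ the projection to $\IZ$, so that $\IZ G$ is a (twisted) Laurent polynomial ring over $\IZ K$ in one variable $z$, and $\eta_{\phi^*\IC_t}$ amounts to substituting $z\mapsto t z$. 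Then $\ln{\det}_{\caln(G)}$ of the resulting operator is, by the Fourier/direct-integral description of $\caln(K\rtimes\IZ)$ over the circle (integrating the $\caln(K)$-determinant of the matrix evaluated at $t\cdot e^{i\theta}$ over $\theta$), an average of functions of the form $s\mapsto\ln{\det}_{\caln(K)}(\cdot)$; the integrand is a logarithm of the determinant of a matrix depending on $e^{s}e^{i\theta}$, which is a subharmonic-type/convex function of $s$ by the standard argument that $\ln|\det|$ of an analytic matrix family is plurisubharmonic, and convexity survives the integration. Alternatively, and perhaps more cleanly for this paper, I would invoke the already-available pinching estimate (Theorem~\ref{the:Properties_of_the_twisted_L2-torsion_function}~(1), equivalently \cite[Theorem~6.7 and Theorem~6.52]{Lueck(2015twisting)}) which is itself proved via such convexity, and extract convexity of $g$ from the results of \cite{Lueck(2015twisting)} on the $L^2$-torsion function of one-variable deformations.

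Granting convexity of $g$, the second part is purely formal. The hypothesis $\lim_{s\to-\infty}\bigl(g(s)-ks\bigr)=C$ means the affine function $\ell_0(s):=ks+C$ is an asymptote of $g$ at $-\infty$; for a convex function this forces $g(s)\ge\ell_0(s)$ for all $s$, since otherwise the chord from a point where $g<\ell_0$ out to $-\infty$ would lie below $\ell_0$, contradicting that $\ell_0$ is the asymptotic slope (more precisely: $g'$ is nondecreasing and tends to $k$ as $s\to-\infty$, so $g'(s)\ge k$ everywhere, hence $g(s)-ks$ is nondecreasing and bounded below by its limit $C$ at $-\infty$). Translating back, $\rho(t)\ge k\ln(t)+C$ for all $t>0$. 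Symmetrically, $\lim_{s\to+\infty}\bigl(g(s)-ls\bigr)=D$ gives $g'(s)\le l$ everywhere, so $g(s)-ls$ is nonincreasing, hence $\ge D$, i.e.\ $\rho(t)\ge l\ln(t)+D$ for all $t>0$. (Note convexity also forces $k\le l$ and is consistent with the asymptotic slopes.)

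The main obstacle is the first part: rigorously pinning down the convexity of $t\mapsto\ln{\det}_{\caln(G)}(\Lambda^G\circ\eta_{\phi^*\IC_t}(r_A))$ in $\ln t$. The reduction to $K\rtimes\IZ$ and the direct-integral computation of the determinant are standard but need care about measurability and about the fact that the determinant is defined via an improper integral of $\ln$ of the spectral density near $0$; one must check that the limiting/monotonicity arguments interchange with that integral. Since the excerpt allows us to assume everything stated earlier, the cleanest execution is to cite the convexity (or the equivalent statement that $\rho(t)+\text{(linear term)}$ is monotone on suitable half-lines) from \cite{Lueck(2015twisting)}, which underlies the pinching estimate already quoted, and then run the two-line convexity argument above; the only genuinely new content of this lemma is the observation that the \emph{exact} asymptotic constants $C$ and $D$, not merely $O(1)$ bounds, upgrade to global inequalities.
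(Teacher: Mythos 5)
Your proposal is correct in substance but reaches the conclusion by a different route than the paper in its second half, and one of your two suggested ways of securing the key convexity input does not actually exist in the literature you cite. The paper's proof shares your first reduction: using that $\ker(\phi)$ is finite, it restricts along a section $s\colon \IZ \to G$ of $\phi$ (after first arranging $\phi$ to be surjective via the scaling property), converting the problem to a matrix $B$ over $\IZ[\IZ]$, and then reduces to a single Laurent polynomial $p$ via $\det_{\IC[\IZ]}$. At that point, however, the paper does not argue abstractly via convexity: it writes $p(z)=c_{n_1}z^r\prod_i(z-a_i)$ and computes $\rho(t)$ exactly as $(r+s)\ln(t)+\ln|c_{n_1}|+\sum_{|a_i|\ge t}(\ln|a_i|-\ln(t))$, identifies $k=r$, $l=r+s$, $C=\ln|c_{n_1}|+\sum_i\ln|a_i|$, $D=\ln|c_{n_1}|$ from the two asymptotic hypotheses, and reads off both inequalities because the remaining sum is a sum of nonnegative terms. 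Your convexity-plus-asymptote argument is a clean repackaging of the same phenomenon (the explicit formula exhibits $g(s)=\rho(e^s)$ as an affine function plus a sum of functions $\max\{\ln|a_i|-s,0\}$, which is manifestly convex), and your formal deduction that a convex $g$ with affine asymptotes at $\pm\infty$ lies above both asymptotes is correct. What the paper's explicit computation buys is that it never needs convexity as a separately established fact; what your argument buys is that it isolates exactly which structural property (multiplicative convexity) drives the lemma.

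One caveat you should repair before executing the plan: your ``alternatively, and perhaps more cleanly'' fallback of extracting convexity from \cite{Lueck(2015twisting)} is not available. The pinching estimate there is proved by operator-norm bounds (in the spirit of Lemma~\ref{lem:degree_estimate}), not by convexity, and convexity of the $L^2$-torsion function for general $G$ is a genuinely hard theorem due independently to Liu \cite{Liu(2015)}, as the authors note in the introduction. So you must actually carry out your primary route: reduce to $G=\IZ$ (where the determinant is a Mahler measure) and obtain convexity in $\ln t$ from Jensen's formula or subharmonicity of $\ln|p|$ --- which, once written out, is essentially the paper's explicit root computation.
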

\begin{proof} 
Choose an integer  $n \ge 1 $ and an epimorphism $\phi' \colon G \to \IZ$ 
such that $\phi = n  \cdot \id_{\IZ} \circ \phi'$.  
Then we get for the two functions $\rho$ and $\rho'$
associated to $\phi$ and $\phi'$ from
Theorem~\ref{the:Properties_of_the_twisted_L2-torsion_function}~(4)
\[
\rho'(t)  = \rho(t^n).
\]
Hence we can assume without loss of generality that $\rho$ is surjective, otherwise replace
$\phi$ by $\phi'$.

Choose a group homomorphism $s \colon \IZ \to G$ with $\phi \circ s = \id$.
Choose a map of sets $\sigma \colon \im(s)\backslash G \to G$ whose composition with the projection
$\pr \colon G \to \im(s)\backslash G$ is the identity and whose composition with
$\phi \colon G \to \IZ$ is the constant map with value $0 \in \IZ$.
Let $B \in M_{m\cdot |\ker(\phi)|,m\cdot |\ker(\phi)|}(\IZ[\IZ])$ be the matrix describing the restriction of 
$r_A \colon \IZ G^m \to \IZ G^m$ with $s$,
see~\cite[(6.40)]{Lueck(2015twisting)}.
Then a direct computation shows for all $t \in (0,\infty)$
\[
s^*(\Lambda^G \circ \eta_{\phi^*\IC_t}(r_A)) = \Lambda^{\IZ} \circ \eta_{(\phi \circ s)^*\IC_t}(r_B) \colon 
L^2(\IZ)^{m \cdot |\ker(\phi)|}  \to  L^2(\IZ)^{m\cdot |\ker(\phi)|} 
\]
where $s^*$ denotes restriction with $s$.  We get from~\cite[Theorem~3.14~(5) on page~128]{Lueck(2002)}
\[
\ln\bigl({\det}_{\caln(G)}(\Lambda^G \circ \eta_{\phi^*\IC_t}(r_A))\bigr)
= 
\frac{\ln\bigl({\det}_{\caln(\IZ)}(s^*(\Lambda^G \circ \eta_{\phi^*\IC_t}(r_A)))\bigr)}{|\ker(\phi)|}.
\]
Hence we can assume without loss of generality $G = \IZ$ and $\phi = \id_{\IZ}$, 
otherwise replace $\phi \colon G \to \IZ$ by $\phi \circ s = \id \colon  \IZ \to \IZ$ and $A$ by $B$.

One easily checks
\[
r_{\det_{\IC [\IZ]}(\eta_{\IC_t}(r_A))} = \eta_{\IC_t} \bigl(r_{\det_{\IC [\IZ]}(A)}\bigr) \colon L^2(\IZ) \to L^2(\IZ).
\]
Because of~\cite[Lemma~6.25]{Lueck(2015twisting)}
we can assume without loss of generality 
$m = 1$, otherwise replace $A$ by the $(1,1)$-matrix given by $\det_{\IC [\IZ]}(A)$.

Let $p(z) \in \IC[\IZ] = \IC[z,z^{-1}]$ be the only entry in the $(1,1)$-matrix $A$. Since
$\Lambda^{\IZ}(r_A)$ is a weak isomorphism by assumption, $p$ is non-trivial.  We can
write
  \[
  p(z) = \sum_{n = n_0}^{n_1} c_n \cdot z^n
  \]
  for integers $n_0$ and $n_1$ with 
  $n_0   \le n_1$, complex numbers $c_{n_0}, c_{n_0 +1}, \ldots, c_{n_1}$ with $c_{n_0} \not= 0$
  and $c_{n_1} \not= 0$. We can also write 
  \[p(z) = c_{n_1} \cdot z^r \cdot \prod_{i=1}^s   (z - a_i) \] 
  for an integer $s \ge 0$, non-zero complex numbers $a_1, \ldots, a_r$
  and an integer $r$.  We get from~\cite[(3.23) on  page~136]{Lueck(2002)} 
   \[
   {\det}_{\caln(\IZ)}\bigl(\Lambda^{\IZ} (r_p)\bigr)  
    =  |c_{n_1}| \cdot \prod_{\substack{i=1, \ldots, s\\|a_i| \ge 1}} |a_i|.
   \]
   For $t \in (0,\infty)$ we get 
   \[p(tz) = c_{n_1} \cdot (tz)^r \cdot \prod_{i=1}^s   (tz - a_i) =
   t^{r+s} \cdot c_{n_1} \cdot z^r \cdot \prod_{i=1}^s   \Big(z - \frac{a_i}{t}\Big) ,
   \] 
   and hence
   \[
   {\det}_{\caln(\IZ)}\bigl(\Lambda^{\IZ} (r_{p(tz)})\bigr)  
     = t^{r+s} \cdot  |c_{n_1}| \cdot \prod_{\substack{i=1, \ldots, s\\|a_i/t| \ge 1}} \left|\frac{a_i}{t}\right|
    =  t^{r+s} \cdot  |c_{n_1}| \cdot \prod_{\substack{i=1, \ldots, s\\|a_i| \ge t}} \frac{|a_i|}{t}.
    \]
   This implies for $t \in (0,\infty)$
   \begin{eqnarray}
   \rho(t) 
   & = & 
   (r+s)  \cdot \ln(t) + \ln(|c_{n_1}|) +  \sum_{\substack{i=1, \ldots, s\\|a_i| \ge t}} (\ln(|a_i|)- \ln(t)).
   \label{lem:uniform_estimate:(1)}
 \end{eqnarray}
 Define positive real numbers 
   \begin{eqnarray*}
   T_0  & = &  \min\{|a_i| \mid i =1,2, \ldots , s\};
   \\
   T_{\infty} & = & \max\{|a_i| \mid i = 1,2, \ldots , s\}.
 \end{eqnarray*}
 Then we get
   \[
   \rho(t) = 
   \begin{cases}
   r \cdot \ln(t) + \ln(|c_{n_1}|) +  \sum_{i=1}^s \ln(|a_i|) & \text{for}\; t \le T_0;
   \\
   (r+s)  \cdot \ln(t) + \ln(|c_{n_1}|) & \text{for}\; t \ge T_{\infty}.
 \end{cases}
 \]   
 Since by assumption there are real numbers $C$ and $D$ and integers $k$ and $l$ such that
\begin{eqnarray*}
\lim_{t \to 0} \rho(t) - k \cdot \ln(t) = C;
\\
\lim_{t \to \infty} \rho(t) - l \cdot \ln(t) = D,
\end{eqnarray*}
we must have $r = k $, $r + s = l$, $C = \ln(|c_{n_1}|) + \sum_{i=1}^s \ln(|a_i|)$ and 
 $D = \ln(|c_{n_1}|)$. Equation~\eqref{lem:uniform_estimate:(1)} becomes
  \begin{eqnarray*}
   \rho(t) 
   & = & 
   l \cdot \ln(t) + D +  \sum_{\substack{i=1, \ldots, s\\|a_i| \ge t}} (\ln(|a_i|)- \ln(t)).
 \end{eqnarray*}
 Since $(\ln(|a_i|)- \ln(t)) \ge 0$ for $|a_i| \ge t$, we get  $l \cdot \ln(t) + \ln(D) \le \rho(t)$ for all $t > 0$.
 We estimate for $t > 0$
  \begin{eqnarray*}
\lefteqn{k \cdot \ln(t) + C}
& &
\\
& = & 
k \cdot \ln(t) + D + \sum_{i=1}^s \ln(|a_i|) 
\\
& = & 
k \cdot \ln(t) + D + \sum_{\substack{i=1, \ldots, s\\|a_i| \ge t}} \ln(|a_i|) + \sum_{\substack{i=1, \ldots, s\\|a_i| < t}} \ln(|a_i|) 
\\
& = & 
r \cdot \ln(t) + D +  s \cdot \ln(t) + \sum_{\substack{i=1, \ldots, s\\|a_i| \ge t}} (\ln(|a_i|) - \ln(t)) 
+ \sum_{\substack{i=1, \ldots, s\\|a_i| < t}} (\ln(|a_i|) - \ln(t))
\\
& = & 
l \cdot \ln(t) + D  + \sum_{\substack{i=1, \ldots, s\\|a_i| \ge t}} (\ln(|a_i|) - \ln(t)) 
+ \sum_{\substack{i=1, \ldots, s\\|a_i| < t}} (\ln(|a_i|) - \ln(t))
\\
& \le  & 
l \cdot \ln(t) + D + \sum_{\substack{i=1, \ldots, s\\|a_i| \ge t}} (\ln(|a_i|) - \ln(t)) 
\\
& = & 
\rho(t).
\end{eqnarray*}
This finishes the proof of Lemma~\ref{lem:uniform_estimate}.
\end{proof}

\begin{definition}[Fibered classes]
  Let $M$ be a $3$-manifold and consider an element $\phi\in H^1(M;\IQ)=\operatorname{Hom}(\pi_1(M),\IQ)$. We say that
  $\phi$ is \emph{fibered} if there exists a locally trivial fiber bundle $p\colon M\to
  S^1$ and a $k \in \IQ$, $k > 0$  such that the induced map
  $p_*\colon \pi_1(M)\to \pi_1(S^1)=\IZ $ coincides with $k \cdot \phi$.
\end{definition}

\begin{theorem}\label{thm:dfl-fibered}
Let $M\ne S^1\times D^2$ be an irreducible 3-manifold. Then the following two statements hold:
\begin{enumerate}
\item[$(1)$] If $M$ is fibered, then for any \largehomoblank homomorphism
  $\mu\colon\pi_1(M)\to G$ to a residually finite group the pair $(M,\mu)$ is
  $L^2$-acyclic.
\item[$(2)$] If $\phi\in H^1(M;\IZ)=\operatorname{Hom}(\pi_1(M),\IZ)$ is a primitive
  fibered class, then there exists a $T\geq 1$ such that for any $\mathfrak{s}\in
  \spinc(M)$ and for any \largehomoblank homomorphism $\mu\colon\pi_1(M)\to G$ to a
  residually finite group the following inequalities hold
\[
\hspace{1cm}\begin{array}{rcll}
\rho^{(2)}(M,\mathfrak{s};\mu,\phi)(t)  &= &  \tmfrac{1}{2}\big(\phi(c_1(\mathfrak{s}))+  x_M(\phi)\big)  \cdot \ln(t)    
  \quad  &\text{for}\; t  < \frac{1}{T};
  \\[2mm]
\rho^{(2)}(M,\mathfrak{s};\mu,\phi)(t)  &=& 
  \tmfrac{1}{2}\big(\phi(c_1(\mathfrak{s}))- x_M(\phi)\big)  \cdot \ln(t)    
  \quad & \text{for}\; t > T.
  \end{array}
 \]
In fact one can choose $T$ to be the entropy of the monodromy.
\end{enumerate}
\end{theorem}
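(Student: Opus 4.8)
The plan is to deduce both statements from the mapping torus structure of a fibered $3$-manifold. Realize $M$ as the mapping torus $T_f$ of the monodromy $f\colon F\to F$ of a fibration $p\colon M\to S^1$ representing a positive multiple of the given class, so the fiber $F$ is dual to $\phi$ and $x_M(\phi)=\chi_-(F)$ by~\eqref{x_for_fiber_bundles}. For $(1)$ I would pick a loop $z\in\pi_1(M)$ mapping to a generator of $\pi_1(S^1)=\IZ$; since $p_*\colon\pi_1(M)\to\IZ$ factors through $H_1(M;\IZ)_f$ and $\mu$ is \largehomo, $p_*$ factors through $\mu$, hence $\mu(z)\in G$ has infinite order. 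With a finite CW structure on $F$ and the induced mapping torus CW structure on $M$, the complex $\IZ G\otimes_{\IZ\pi}C_*(\widetilde M)$ is $\IZ G$-chain homotopy equivalent to the algebraic mapping torus of the self-equivalence of $\IZ G\otimes_{\IZ\mu(\pi_1F)}C_*(\widehat F)$ induced by $f$ and translation by $\mu(z)$, where $\widehat F\to F$ is the $\mu(\pi_1F)$-covering; after applying $L^2(G)\otimes_{\IC G}-$ this self-equivalence becomes a weak isomorphism because $\mu(z)$ has infinite order, so the algebraic mapping torus is $L^2$-acyclic. This is the standard argument for vanishing of $L^2$-Betti numbers of mapping tori (cf.\ the proof of~\cite[Theorem~1.39]{Lueck(2002)}) and works verbatim for any such $G$-covering; hence $b^{(2)}_n(\overline M;\caln(G))=0$ for all $n$.

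For $(2)$, by Theorem~\ref{the:Properties_of_the_twisted_L2-torsion_function}~(2) and~(5) together with~\eqref{equ:c1he} it suffices to produce one $\mathfrak{s}_0\in\spinc(M)$ and a $T\ge1$ realizing the claimed linear behavior for $t<1/T$ and $t>T$: changing the $\spinc$-structure only adds a multiple of $\ln(t)$, and the symmetry pins down the two slopes individually once their sum $\phi(c_1(\mathfrak{s}_0))$ and their difference $x_M(\phi)$ are known. One may assume $\chi(F)<0$; otherwise $M$ is a torus or annulus bundle, a graph manifold or a $\mathrm{Sol}$-manifold, for which $\rho^{(2)}(M,\mathfrak{s};\mu,\phi)$ is an explicit linear function of $\ln(t)$ that one checks by hand. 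Following the proof of~\cite[Proposition~9.1]{Dubois-Friedl-Lueck(2014Alexander)}—which for a \emph{fibered} class, using the mapping torus CW structure, carries no defect—together with Fox calculus as in Theorem~\ref{the:calculation_of_L2-torsion_from_a_presentation}, I expect to obtain a $\spinc$-structure $\mathfrak{s}_0$ with $\phi(c_1(\mathfrak{s}_0))=-x_M(\phi)$, a section element $\gamma\in\pi_1(M)$ with $\phi(\gamma)=1$, and a square matrix $B$ over $\IZ[\ker(\phi)]$ of size $x_M(\phi)$ which is invertible over $\IZ[\ker(\phi)]$ (it is, up to conjugation, the Fox Jacobian of the automorphism $f_\#$), such that for $A:=B+\gamma\cdot\operatorname{id}$ and every \largehomoblank $\mu$ with $(M,\mu)$ $L^2$-acyclic,
\[
\rho^{(2)}(M,\mathfrak{s}_0;\mu,\phi)(t)=-\ln\bigl({\det}_{\caln(G)}(\Lambda^G\circ\eta_{\phi^*\IC_t}(r_{\mu(A)}))\bigr),
\]
the piecewise-linear error term of Theorem~\ref{the:calculation_of_L2-torsion_from_a_presentation} being trivial for this choice.

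Write $\bar\phi\colon G\to\IZ$ for the map with $\bar\phi\circ\mu=\phi$. If $\ker(\bar\phi)$ is finite, Lemma~\ref{lem:uniform_estimate} applies to $\bar\phi$ and $\mu(A)$; its proof reduces, by restriction to a copy of $\IZ$, the computation of $\ln{\det}_{\caln(G)}(\Lambda^G\circ\eta_{\phi^*\IC_t}(r_{\mu(A)}))$ to the Mahler measure of one Laurent polynomial, and since $A=B+\gamma\operatorname{id}$ has invertible leading $\gamma$-coefficient $\operatorname{id}$ and invertible constant coefficient $B$, that polynomial has $z$-span exactly $x_M(\phi)$ and its extreme coefficients are units over $\IZ[\ker(\phi)]$, so $\rho^{(2)}(M,\mathfrak{s}_0;\mu,\phi)(t)$ is \emph{exactly} $0$ below the smallest root modulus and exactly $-x_M(\phi)\ln(t)$ above the largest; the vanishing of the additive constants uses that $B$ is a unit, hence ${\det}_{\caln(G)}(\Lambda^G(r_{\mu(B)}))=1$ by the determinant-class property for residually finite $G$. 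One then shows these root moduli are bounded, uniformly in $\mu$, by $e^{h(f)}$, because the topological entropy of a surface homeomorphism equals the exponential growth rate of the relevant matrix powers governing those moduli, and that the bound is attained. For general $\mu$, use residual finiteness to choose normal subgroups $G_i\lhd G$ with $G_i\subseteq\ker(\bar\phi)$, $[\ker(\bar\phi):G_i]<\infty$, $\bigcap_iG_i=\{1\}$, and put $Q_i:=G/G_i$, $\mu_i:=\operatorname{pr}_i\circ\mu$; each $Q_i$ is virtually $\IZ$, each $\mu_i$ is \largehomoblank, $(M,\mu_i)$ is $L^2$-acyclic by~$(1)$, and $\bar\phi$ descends to $\bar\phi_i\colon Q_i\to\IZ$ with finite kernel, so the previous case applies to each $\mu_i$ with a common $T=e^{h(f)}$. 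The Twisted Approximation inequality (Theorem~\ref{the:Twisted_Approximation_inequality}), applied to $\phi=\bar\phi\circ\mu$, the chain $(G_i)$, and $\mu(A)$, gives $\rho^{(2)}(M,\mathfrak{s}_0;\mu,\phi)(t)\le\liminf_i\rho^{(2)}(M,\mathfrak{s}_0;\mu_i,\phi)(t)$ for all $t$, hence $\le0$ for $t<1/T$ and $\le-x_M(\phi)\ln(t)$ for $t>T$; the matching lower bounds come from Theorem~\ref{the:lower_bound} once its additive defect is removed for a fibered class, which is again possible because $A$ is invertible over $\IZ G$ and ${\det}_{\caln(G)}(\Lambda^G(r_U))=1$ for every $U\in\operatorname{GL}(\IZ G)$ when $G$ is residually finite. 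Combining the two estimates yields the asserted equalities.

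The genuinely hard part, and the step I expect to be the main obstacle, is the Mahler-measure analysis in the finite-kernel case together with the identification of the threshold with the entropy of the monodromy: one must show that after restriction to $\IZ$ the root moduli of the resulting polynomial are controlled, uniformly over the approximating quotients $Q_i$, by $e^{h(f)}$ and attain it in the limit, and one must ensure that the one-sided approximation inequality, combined with the lower bound of Theorem~\ref{the:lower_bound}, really pins $\rho^{(2)}$ to an \emph{exact} linear function—not merely up to a bounded term—outside $[1/T,T]$; this last point is precisely where the determinant-class vanishing of the spurious constants (forced by $B$ being a unit and $A$ invertible over $\IZ G$) is used.
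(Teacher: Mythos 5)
Your treatment of part $(1)$ and your reduction of part $(2)$ to a single well-chosen $\mathfrak{s}_0$ via Theorem~\ref{the:Properties_of_the_twisted_L2-torsion_function}~(2),(5) and~\eqref{equ:c1he} agree with the paper, which handles $(1)$ by citing L\"uck's mapping-torus vanishing theorem and handles $(2)$ by quoting Theorem~8.5 of~\cite{Dubois-Friedl-Lueck(2014Alexander)} for the existence of one $\mathfrak{s}$ with $\rho^{(2)}=0$ for $t<1/T$ and $\rho^{(2)}=-x_M(\phi)\ln(t)$ for $t>T$ (with $T$ the entropy), and then symmetrizes exactly as at the end of the proof of Theorem~\ref{the:lower_bound}. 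Where you diverge is in attempting to reprove that exact computation via approximation by virtually cyclic quotients plus a Mahler-measure analysis, and there your argument has a genuine gap.

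The gap is in how you obtain the \emph{exact} values for a general residually finite $G$. Theorem~\ref{the:Twisted_Approximation_inequality} is one-sided: it gives ${\det}_{\caln(G)}\ge\limsup_i{\det}_{\caln(Q_i)}$, hence only $\rho^{(2)}(M,\mathfrak{s}_0;\mu,\phi)(t)\le 0$ for $t<1/T$ and $\le -x_M(\phi)\ln(t)$ for $t>T$. For the matching inequality you invoke Theorem~\ref{the:lower_bound} ``with its additive defect removed,'' justified by the invertibility of $A=B+\gamma\cdot\id$ over $\IZ G$. But the constant $D$ there comes from Lemma~\ref{lem:degree_estimate} and is strictly positive, and invertibility of $A$ over $\IZ G$ says nothing about the twisted operator $B+t\gamma\cdot\id$ for $t\neq 1$, which is not a matrix over $\IZ G$; so no mechanism you name forces $D=0$. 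The missing ingredient --- which is the actual content of the cited Theorem~8.5 of~\cite{Dubois-Friedl-Lueck(2014Alexander)} --- is the factorization ${\det}_{\caln(G)}(B+t\gamma)={\det}_{\caln(G)}(B)\cdot{\det}_{\caln(G)}(\id+tB^{-1}\gamma)$ together with the observation that $(tB^{-1}\gamma)^p$ is supported on $\phi^{-1}(p)$, so that $\tr_{\caln(G)}\bigl((tB^{-1}\gamma)^p\bigr)=0$ for all $p\ge 1$ and hence ${\det}_{\caln(G)}(\id+tB^{-1}\gamma)=1$ whenever $\|tB^{-1}\gamma\|<1$ (and symmetrically for large $t$ after factoring out $t\gamma$). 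Combined with ${\det}_{\caln(G)}(\Lambda^G(r_{\mu(B)}))=1$ this gives the exact value directly for \emph{every} such $G$, with no approximation and no Mahler measures. Without this trace argument your scheme delivers only an upper bound outside $[1/T,T]$, not equality. A second, acknowledged, gap is the identification of the optimal threshold with the entropy of the monodromy, which you defer; it is likewise part of the cited result and is not supplied by the root-modulus analysis of the finite quotients as you describe it.
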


\begin{proof}
The first statement follows from \cite[Theorem~2.1]{Lueck(1994b)}.
Now we denote by $T$ the entropy of the monodromy of the primitive fibered class $\phi$.
By Theorem~8.5 of \cite{Dubois-Friedl-Lueck(2014Alexander)} there exists an $\mathfrak{s}\in \spinc(M)$ such that 
\begin{eqnarray*}
0 & = & \rho^{(2)}(M,\mathfrak{s};\mu,\phi)(t)  
\quad  \text{for}\; t  <\frac{1}{T};
\\
- x_M(\phi)  \cdot \ln(t)  
& = &
\rho^{(2)}(M,\mathfrak{s};\mu,\phi)(t)    
\quad  \text{for}\; t > T.
\end{eqnarray*}
The statement of the theorem follows from these inequalities in precisely the same way as
we concluded the proof of Theorem~\ref{the:lower_bound}.
\end{proof}

The next lemma improves on Theorem~\ref{thm:dfl-fibered} in so far as it gives us some
control over $\rho^{(2)}(M,\mathfrak{s};\mu,\phi)$ for all $t$. In particular the set of
$t$'s for which we have control does not depend on the choice of $\phi$.

\begin{lemma} \label{lem:mapping_tori_uniform_estimate} 
Let $(M,\mu\colon \pi_1(M)\to G)$ be an admissible pair and let $\mathfrak{s}\in \spinc(M)$. Then for any fibered $\phi\in H^1(M;\IQ)$  we have
\[
\begin{array}{rcll}
\rho^{(2)}(M,\mathfrak{s};\mu,\phi)(t)  &\le &  \tmfrac{1}{2}\big(\phi(c_1(\mathfrak{s}))+  x_M(\phi)\big)  \cdot \ln(t)    
  \quad  &\text{for}\; t  \le 1;
  \\[2mm]
\rho^{(2)}(M,\mathfrak{s};\mu,\phi)(t)  &\le& 
  \tmfrac{1}{2}\big(\phi(c_1(\mathfrak{s}))- x_M(\phi)\big)  \cdot \ln(t)    
  \quad & \text{for}\; t \ge 1.
  \end{array}
 \]
\end{lemma}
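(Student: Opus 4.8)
The plan is to reduce, via approximation, to the single–variable statement of Lemma~\ref{lem:uniform_estimate}. First I would carry out three routine reductions. A positive rational multiple of a fibered class is again fibered, and the asserted inequalities transform correctly under $\phi\mapsto r\phi$ by Theorem~\ref{the:Properties_of_the_twisted_L2-torsion_function}~(4), \eqref{scaling_Thurston_norm} and $(r\phi)(c_1(\mathfrak{s}))=r\cdot\phi(c_1(\mathfrak{s}))$, exactly as in the proof of Lemma~\ref{lem:primitiveisenough}; so I may assume $\phi\in H^1(M;\IZ)$ is primitive and fibered. By Theorem~\ref{the:Properties_of_the_twisted_L2-torsion_function}~(2) together with \eqref{equ:c1he} it suffices to prove the inequalities for a single $\mathfrak{s}\in\spinc(M)$. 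Since the quantities involved depend only on $\im(\mu)$, I may also assume $\mu$ is surjective; then $G$ is residually finite, $\phi\colon G\to\IZ$ is surjective, and $\ker(\phi)$ is finitely generated, being the image of the fibre surface group. By Theorem~\ref{thm:dfl-fibered}~(1) the pair $(M,\mu)$ is $L^2$-acyclic.

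By Theorem~\ref{the:calculation_of_L2-torsion_from_a_presentation} we may fix a square matrix $A$ over $\IZ[\pi_1(M)]$, together with the $\spinc$-structure $\mathfrak{s}$ that it determines, so that $\rho^{(2)}(M,\mathfrak{s};\mu,\phi)(t)=-\ln\bigl({\det}_{\caln(G)}(\Lambda^G\circ\eta_{\phi^*\IC_t}(r_A))\bigr)+\eta(t)$, with $\eta$ the explicit elementary error term of that theorem. Put $\sigma_G(t):=\ln\bigl({\det}_{\caln(G)}(\Lambda^G\circ\eta_{\phi^*\IC_t}(r_A))\bigr)$. Comparing this formula with Theorem~\ref{thm:dfl-fibered}~(2) (and using that $\eta(t)=0$ for $t\le1$) shows that, for $t$ outside $[1/T,T]$, the function $\sigma_G$ agrees near $0$ and near $\infty$ with affine functions of $\ln(t)$ whose slopes are explicit and whose constant terms vanish. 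It therefore suffices to show that $\sigma_G$ lies on or above each of these two asymptotic lines for \emph{all} $t>0$: plugging this back into $\rho^{(2)}=-\sigma_G+\eta$ and using the line near $0$ for $t\le1$ and the line near $\infty$ for $t\ge1$ yields exactly the two claimed bounds.

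To bound $\sigma_G$ from below I would approximate. Choose a nested sequence of finite-index characteristic subgroups $G_i$ of the finitely generated residually finite group $\ker(\phi)$ with $\bigcap_iG_i=\{1\}$; these are normal in $G$ and contained in $\ker(\phi)$, so Theorem~\ref{the:Twisted_Approximation_inequality} applies with $Q_i=G/G_i$ and gives ${\det}_{\caln(G)}(\Lambda^G\circ\eta_{\phi^*\IC_t}(r_A))\ge\limsup_i{\det}_{\caln(Q_i)}(\Lambda^{Q_i}\circ\eta_{\phi_i^*\IC_t}(r_{A[i]}))$, as well as the corresponding limit formula for kernel dimensions. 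The point is that $\phi_i\colon Q_i\to\IZ$ now has \emph{finite} kernel, so Lemma~\ref{lem:uniform_estimate} applies to $(Q_i,\phi_i,A[i])$ and shows that $\sigma_i(t):=\ln\bigl({\det}_{\caln(Q_i)}(\Lambda^{Q_i}\circ\eta_{\phi_i^*\IC_t}(r_{A[i]}))\bigr)$ lies on or above each of its own two asymptotic lines for all $t>0$. Taking $\limsup_i$ and inserting the approximation inequality then yields the desired lower bound on $\sigma_G$, \emph{provided} the asymptotic data (slope and vanishing constant term) of the $\sigma_i$ converge to those of $\sigma_G$.

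Establishing this last point is the part I expect to require real work. One has to check, for the virtually cyclic groups $Q_i$, both the hypothesis of Lemma~\ref{lem:uniform_estimate} that $\Lambda^{Q_i}(r_{A[i]})$ is a weak isomorphism, and that the asymptotic slopes and constant terms of $\sigma_i$ tend to those of $\sigma_G$. The slopes are governed by the degrees of the $t$-twisted matrix in the Ore localization over $Q_i$, which are pinned down by combining the kernel-dimension part of Theorem~\ref{the:Twisted_Approximation_inequality} with $L^2$-acyclicity; the constant terms are governed by the absolute values of the leading and trailing coefficients of the Laurent polynomial produced, as in the proof of Lemma~\ref{lem:uniform_estimate}, by restricting $r_{A[i]}$ to a copy of $\IZ\le Q_i$, and those coefficients are integers of absolute value $1$ — the ``monic'' feature of a fibered class, visible upstairs through Theorem~\ref{thm:dfl-fibered}~(2) and inherited by every finite quotient. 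The weak-isomorphism hypothesis is handled the same way: over the virtually cyclic $Q_i$ the family $t\mapsto\Lambda^{Q_i}\circ\eta_{\phi_i^*\IC_t}(r_{A[i]})$ is a weak isomorphism either for all $t>0$ or only on a set with empty interior, so it can be detected at a single generic $t$, where it follows from the kernel-dimension equality. Finally, the pinching constants in Lemma~\ref{lem:degree_estimate} and Theorem~\ref{the:Properties_of_the_twisted_L2-torsion_function}~(1) depend only on the $\ell^1$-norm of the entries of $A$ and hence are uniform in $i$, which is what makes the passage to the limit legitimate.
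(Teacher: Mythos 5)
Your overall architecture --- reduce to a primitive fibered class and to a single $\spinc$-structure, rewrite $\rho^{(2)}(M,\mathfrak{s};\mu,\phi)$ via the matrix $A$ of Theorem~\ref{the:calculation_of_L2-torsion_from_a_presentation}, read off the exact asymptotics of $\sigma_G$ from Theorem~\ref{thm:dfl-fibered}~(2), and then prove that $\sigma_G$ stays above its two asymptotic lines for all $t$ by approximating with finite quotients $Q_i$ and invoking Lemma~\ref{lem:uniform_estimate} there --- is exactly the paper's. The argument diverges, and acquires a genuine gap, at the step you yourself flag as requiring real work: identifying the asymptotic data of the approximating functions $\sigma_i$.

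The paper settles this in one stroke: each $\mu_i = \pr_i\circ\mu$ is again a \largehomoblank homomorphism to a residually finite group, so Theorem~\ref{thm:dfl-fibered}~(2) applies to $(M,\mu_i,\phi)$ itself and shows that every $\sigma_i$ coincides, outside the fixed interval $[1/T,T]$ (with $T$ the entropy of the monodromy, hence independent of $i$), with the \emph{same} two affine functions of $\ln(t)$ as $\sigma_G$. Lemma~\ref{lem:uniform_estimate} then yields $\sigma_i(t)\ge k\ln(t)+C$ and $\sigma_i(t)\ge l\ln(t)+D$ for all $t$ with data independent of $i$, and the approximation inequality transports this to $\sigma_G$. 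Your substitutes for this step do not work as stated. The kernel-dimension part of Theorem~\ref{the:Twisted_Approximation_inequality} is a statement at each fixed $t$ about von Neumann dimensions of kernels; it says nothing about the slopes of the asymptotes of $\ln\bigl({\det}_{\caln(Q_i)}(\cdots)\bigr)$ as $t\to 0$ or $t\to\infty$. And the claim that the leading and trailing coefficients of the one-variable polynomial obtained from $A[i]$ over $Q_i$ have absolute value $1$, being ``inherited by every finite quotient,'' is precisely the content of Theorem~\ref{thm:dfl-fibered}~(2) applied to $\mu_i$; it cannot be read off from the statement for $\mu$ upstairs, and you never actually invoke it for the quotients. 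Without equality (or at least convergence) of the lines $k_i\ln(t)+C_i$ to $k\ln(t)+C$, the chain $\sigma_G(t)\ge\limsup_i\sigma_i(t)\ge\limsup_i\bigl(k_i\ln(t)+C_i\bigr)$ yields nothing. Note also that applying Theorem~\ref{thm:dfl-fibered}~(2) to $\mu_i$ requires $\mu_i$ to remain \largehomo, which forces $G_i\subseteq\ker(G\to H_1(G;\IZ)_f)$ as in the paper; your choice of $G_i$ of finite index in $\ker(\phi)$ does not guarantee this once $b_1(M)>1$, so the one tool that would close the gap is not available for your quotients. (Your choice does make the finite-kernel hypothesis of Lemma~\ref{lem:uniform_estimate} literally true, which is a legitimate concern, but it sacrifices the ingredient the proof cannot do without.)
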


\begin{proof}
  Let $(M,\mu\colon \pi_1(M)\to G)$ be an admissible pair such that $M$ admits a fibered
  class. By Theorem~\ref{thm:dfl-fibered} the pair $(M,\mu)$ is $L^2$-acyclic. Let
  $\mathfrak{s}\in \spinc(M)$. The argument of the proof of
  Lemma~\ref{lem:primitiveisenough} shows that it suffices to prove the lemma for
  primitive fibered classes.  So let $\phi\in H^1(M;\IZ)=\operatorname{Hom}(\pi_1(M),\IZ)$
  be a primitive fibered class.

Consider a nested sequence of in $G$ normal subgroups
\[
G  \supseteq G_0 \supseteq G_1 \supseteq G_2 \supseteq \cdots 
\]
such that $G_i$ is contained in $\ker(G\to H_1(G;\IZ)_f)$, 
 the index $[\ker(G\to H_1(G;\IZ)_f) : G_i]$ 
is finite for $i \ge 0$
and the intersection $\bigcap_{i \ge 0} G_i$ is trivial. Put $Q_i := G/G_i$.
Denote  by $\pr_i \colon G \to Q_i$  the obvious projection.
Let $\mu_i \colon \pi_1(M) \to Q_i$ be the composition $\pr_i \circ \mu$. The homomorphisms $\mu_i$ are again \largehomo.

In the following we consider only the case where $M$ is closed, the case with boundary is
analogous. We apply Theorem~\ref{the:calculation_of_L2-torsion_from_a_presentation} (2) to
$M$. We denote the resulting square matrix over $\IZ[\pi]$ by $A$ and the resulting
elements in the group $\pi$ by $s,s'$. We write $A_i=\pr_i(A)$.  Define
\[ \eta(t)=\max\{0,|\phi(s)| \cdot \ln(t)\}+
\max\{0,|\phi(s')| \cdot \ln(t)\}.\]

As above, the pair $(M,\mu_i)$ is  $L^2$-acyclic. 
Our choice of $A$ and $s,s'$ ensures that
\begin{eqnarray*}
 \rho^{(2)}(M,\mathfrak{s};\mu,\phi) 
& = & 
\eta(t)- \ln\bigl({\det}_{\caln(G)}\bigl(\Lambda^G \circ \eta_{\phi^* \IC_t}(r_A)\bigr)\bigr);
\\
\rho^{(2)}(M,\mathfrak{s};\mu_i,\phi)
& = & \eta(t)
- \ln\bigl({\det}_{\caln(Q_i)}\bigl(\Lambda^{Q_i} \circ \eta_{\phi^* \IC_t}(r_{A_i})\bigr)\bigr).
\end{eqnarray*}
We conclude from Theorem~\ref{the:Twisted_Approximation_inequality}
\begin{eqnarray}
\quad \quad \quad \ln\bigl({\det}_{\caln(G)}\bigl(\Lambda^G \circ \eta_{\phi^* \IC_t}(r_A)\bigr)\bigr) 
& \ge   & 
\limsup_{i \to \infty} \ln\bigl({\det}_{\caln(Q_i)}\bigl(\Lambda^{Q_i} \circ \eta_{\phi^* \IC_t}(r_{A_i})\bigr)\bigr).
\label{lem:mapping_tori_uniform_estimate:inequality} 
\end{eqnarray}

By Theorem~\ref{thm:dfl-fibered}  there exists a $T\geq 1$ such that 
for any natural number $i$ we have
\[
\begin{array}{rcll}
\rho^{(2)}(M,\mathfrak{s};\mu_i,\phi)(t)  &= &  \tmfrac{1}{2}\big(\phi(c_1(\mathfrak{s}))+  x_M(\phi)\big)  \cdot \ln(t)    
  \quad  &\text{for}\; t  < \frac{1}{T};
  \\[2mm]
\rho^{(2)}(M,\mathfrak{s};\mu_i,\phi)(t)  &=& 
  \tmfrac{1}{2}\big(\phi(c_1(\mathfrak{s}))- x_M(\phi)\big)  \cdot \ln(t)    
  \quad & \text{for}\; t > T.
  \end{array}
 \]
This implies  
\[
\begin{array}{lcll}
\ln\bigl({\det}_{\caln(Q_i)}\bigl(\Lambda^G \circ \eta_{\phi^* \IC_t}(r_{A_i})\bigr)\bigr)   
& = & 
\eta(t)-\frac{1}{2}\big(\phi(c_1(\mathfrak{s}))+  x_M(\phi)\big)  \cdot \ln(t)     & \text{for} \; t < \frac{1}{T};
\\[2mm]
\ln\bigl({\det}_{\caln(Q_i)}\bigl(\Lambda^G \circ \eta_{\phi^* \IC_t}(r_{A_i})\bigr)\bigr)
& = & 
 \eta(t)-\frac{1}{2}\big(\phi(c_1(\mathfrak{s}))-  x_M(\phi)\big)  \cdot \ln(t)    & \text{for} \; t > T.
\end{array}
\]
Then Lemma~\ref{lem:uniform_estimate} applied to $\phi \colon Q_i \to \IZ$ yields 
\[
\begin{array}{lcll}
\ln\bigl({\det}_{\caln(Q_i)}\bigl(\Lambda^G \circ \eta_{\phi^* \IC_t}(r_{A_i})\bigr)\bigr)   
& \ge  & 
\eta(t)-\frac{1}{2}\big(\phi(c_1(\mathfrak{s}))+  x_M(\phi)\big)  \cdot \ln(t)    & \text{for} \; t \le 1
\\[2mm]
\ln\bigl({\det}_{\caln(Q_i)}\bigl(\Lambda^G \circ \eta_{\phi^* \IC_t}(r_{A_i})\bigr)\bigr)   
& \ge & 
  \eta(t)-\frac{1}{2}\big(\phi(c_1(\mathfrak{s}))- x_M(\phi)\big)  \cdot \ln(t)     & \text{for} \; t \ge  1.
\end{array}
\]
Since this holds for all $i \ge 0$ and all $t > 0$, we conclude 
from~\eqref{lem:mapping_tori_uniform_estimate:inequality} 
\[
\begin{array}{lcll}
\ln\bigl({\det}_{\caln(G)}\bigl(\Lambda^G \circ \eta_{\phi^* \IC_t}(r_A)\bigr)\bigr)   
& \ge  & 
\eta(t)-\frac{1}{2}\big(\phi(c_1(\mathfrak{s}))+  x_M(\phi)\big)  \cdot \ln(t)
& 
\text{for}\;  t \le 1;
\\[2mm]
\ln\bigl({\det}_{\caln(G)}\bigl(\Lambda^G \circ \eta_{\phi^* \IC_t}(r_A)\bigr)\bigr)   
& \ge & 
\eta(t)-\frac{1}{2}\big(\phi(c_1(\mathfrak{s}))-  x_M(\phi)\big)  \cdot \ln(t)
& 
\text{for}\;  t \ge 1.
\end{array}
\]
This implies 
\[
\begin{array}{lcll}
\rho^{(2)}(M,\mathfrak{s};\mu,\phi)   
& \le & \frac{1}{2}\big(\phi(c_1(\mathfrak{s}))+  x_M(\phi)\big)  \cdot \ln(t) &
\quad  \text{for}\; t \le 1;
\\[2mm]
\rho^{(2)}(M,\mathfrak{s};\mu,\phi) 
& \le & 
\frac{1}{2}\big(\phi(c_1(\mathfrak{s}))-  x_M(\phi)\big)  \cdot \ln(t) & \quad  \text{for}\; t \ge 1.
\end{array}
\]
\end{proof}

\begin{lemma}\label{lem:continuity_of_det}
  Let $\Gamma $ be a group that is virtually finitely generated free abelian. Consider a
  finite subset $S \subseteq \Gamma$.  Then for any natural number $n$ the function
  \begin{multline*}
  \{A \in M_{n,n}(\IC \Gamma) \mid \supp_{\Gamma}(A) \subseteq S\} \to [0,\infty], 
  \\
   A  \mapsto \begin{cases} {\det}_{\caln(\Gamma)}(\Lambda^{\Gamma}(r_A)) 
   & 
   \text{if} \; \Lambda^{\Gamma}(r_A) \;
    \text{is a weak isomorphism;} 
    \\ 
    0 
    & 
    \text{otherwise,}
  \end{cases}
  \end{multline*}
is continuous with respect to the standard topology on the source coming from 
  the structure of a finite-dimensional complex vector space.
\end{lemma}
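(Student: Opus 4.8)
The plan is to reduce to the case $\Gamma=\IZ^d$ by restricting to a finite-index free abelian subgroup, and then to recognise the function as a (matrix) Mahler measure whose continuity in the coefficients is the real issue. For the reduction, pick a normal subgroup $\Gamma_0\subseteq\Gamma$ of finite index $k:=[\Gamma:\Gamma_0]$ with $\Gamma_0\cong\IZ^d$. Restriction along $\Gamma_0\hookrightarrow\Gamma$ identifies $L^2(\Gamma)$ with $L^2(\Gamma_0)^k$ and carries $\Lambda^\Gamma(r_A)$ to $\Lambda^{\Gamma_0}(r_{A'})$ for a matrix $A'\in M_{kn,kn}(\IC\Gamma_0)$ whose support lies in a fixed finite set (depending only on $S$, $\Gamma_0$ and a choice of coset representatives) and whose entries are $\IC$-linear, in particular continuous, in $A$; moreover $\Lambda^\Gamma(r_A)$ is a weak isomorphism if and only if $\Lambda^{\Gamma_0}(r_{A'})$ is. By \cite[Theorem~3.14~(5) on page~128]{Lueck(2002)} we have ${\det}_{\caln(\Gamma)}(\Lambda^\Gamma(r_A))={\det}_{\caln(\Gamma_0)}(\Lambda^{\Gamma_0}(r_{A'}))^{1/k}$ whenever these operators are weak isomorphisms, and the value $0$ is assigned in both settings in the remaining case. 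Since $x\mapsto x^{1/k}$ is continuous on $[0,\infty)$, it suffices to treat $\Gamma=\IZ^d$.

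Identify $\caln(\IZ^d)$ with $L^\infty(\IT^d)$ by the Fourier transform, so that a matrix $A\in M_{n,n}(\IC[\IZ^d])$ corresponds to a matrix-valued trigonometric polynomial $\widehat A\colon\IT^d\to M_{n,n}(\IC)$ and $\Lambda^{\IZ^d}(r_A)$ to multiplication by $\widehat A$ on $L^2(\IT^d)^n$. Put $p_A:=\det_{\IC[\IZ^d]}(A)\in\IC[\IZ^d]$, so that $\widehat{p_A}(z)=\det\bigl(\widehat A(z)\bigr)$. Then $\Lambda^{\IZ^d}(r_A)$ is a weak isomorphism precisely when $p_A\neq 0$, and in that case $\ln|\widehat{p_A}|\in L^1(\IT^d)$ and the standard description of the Fuglede--Kadison determinant over $\IZ^d$ gives ${\det}_{\caln(\IZ^d)}(\Lambda^{\IZ^d}(r_A))=\exp\bigl(\int_{\IT^d}\ln|\widehat{p_A}(z)|\,dz\bigr)=:\IM(p_A)$, the exponentiated logarithmic Mahler measure of $p_A$; with the convention $\exp(-\infty):=0$ this same formula produces the value $0$ assigned to $A$ when $p_A=0$. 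The coefficient support of $p_A$ lies in a fixed finite set $S''\subseteq\IZ^d$ depending only on $S$ and $n$, and $A\mapsto p_A$ is a polynomial, hence continuous, map to the finite-dimensional space $V:=\{p\in\IC[\IZ^d]\mid\supp p\subseteq S''\}$. Thus the assertion reduces to continuity of $\IM\colon V\to[0,\infty)$.

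At $p=0$ this is immediate: writing $p=\sum_{s\in S''}c_sz^s$ one has $|p(z)|\le(\#S'')\cdot\max_s|c_s|$ on $\IT^d$, hence $\IM(p)\le(\#S'')\cdot\max_s|c_s|\to 0$. Fix now $p^\ast\neq 0$ and set $m(p):=\int_{\IT^d}\ln|p|=\int_{\IT^d}\ln^+|p|-\int_{\IT^d}\ln^-|p|$. If $p_i\to p^\ast$ in $V$ then $p_i\to p^\ast$ uniformly on $\IT^d$, so $\ln^{\pm}|p_i|\to\ln^{\pm}|p^\ast|$ almost everywhere, because $\{p^\ast=0\}$ is a null set; dominated convergence applies to the uniformly bounded functions $\ln^+|p_i|$, and Fatou gives $\liminf_i\int\ln^-|p_i|\ge\int\ln^-|p^\ast|$, so $\limsup_i m(p_i)\le m(p^\ast)$, i.e.\ $m$ is upper semicontinuous. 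What remains is lower semicontinuity at $p^\ast$, that is, $\int\ln^-|p_i|\to\int\ln^-|p^\ast|$, and by Vitali's convergence theorem it suffices to prove that $\{\ln^-|p_i|\mid i\ge 0\}$ is uniformly integrable on $\IT^d$.

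The hard part is this uniform integrability. It follows once one has, for some $\varepsilon>0$ and some neighbourhood $U$ of $p^\ast$ in $V$, a uniform bound $\sup_{p\in U}\int_{\IT^d}|p(z)|^{-\varepsilon}\,dz<\infty$: combined with the elementary pointwise inequality $\ln^-t\le(e\varepsilon)^{-1}t^{-\varepsilon}$ for $0<t\le 1$, this makes $\{\ln^-|p|\mid p\in U\}$ bounded in $L^{1+\delta}(\IT^d)$ for small $\delta>0$, hence uniformly integrable on the finite measure space $\IT^d$. Establishing this uniform $L^{-\varepsilon}$ bound --- equivalently, the uniform integrability of the logarithms of a convergent family of nonzero Laurent polynomials of fixed support, which is exactly the continuity of the Mahler measure in the coefficients --- is the main obstacle; I plan to obtain it from {\L}ojasiewicz-type lower bounds for polynomials of bounded degree (i.e.\ from a uniform positive lower bound for the log canonical thresholds of nonzero polynomials of bounded degree), and this is precisely where the careful analysis of Mahler measures enters. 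Granting this bound, Vitali's theorem yields $m(p_i)\to m(p^\ast)$, hence $\IM(p_i)\to\IM(p^\ast)$, and the proof is complete.
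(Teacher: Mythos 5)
Your two reductions -- first from a virtually finitely generated free abelian $\Gamma$ to $\IZ^d$ via restriction to a finite-index subgroup and the multiplicativity of the Fuglede--Kadison determinant under restriction, then from $n \times n$ matrices to the $1\times 1$ case via $\det_{\IC[\IZ^d]}$, keeping track of the fixed finite support -- are exactly the reductions in the paper, and they are carried out correctly (including the observation that the value $0$ is assigned consistently on both sides and that $x \mapsto x^{1/k}$ is continuous). The identification of the remaining $n=1$ case with continuity of the Mahler measure $p \mapsto \IM(p)$ on the finite-dimensional space of Laurent polynomials with support in a fixed finite set, and the easy continuity at $p=0$, are also correct, as is your upper semicontinuity argument via dominated convergence and Fatou.

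The problem is the last step. Lower semicontinuity of $m(p)=\int_{\IT^d}\ln|p|$ at a nonzero $p^\ast$ -- equivalently the uniform integrability of $\ln^-|p|$ for $p$ near $p^\ast$, or your uniform bound $\sup_{p \in U}\int_{\IT^d}|p|^{-\varepsilon}\,dz < \infty$ -- is precisely the nontrivial content of the lemma, and you do not prove it: you write that you ``plan to obtain it'' from uniform {\L}ojasiewicz-type lower bounds, i.e.\ from a uniform positive lower bound on log canonical thresholds together with a uniform bound on the resulting integrals. That statement is true but is itself a substantial theorem (of Demailly--Koll\'ar type in the plurisubharmonic setting, or, in exactly the form needed here, Boyd's theorem on uniform approximation to Mahler's measure in several variables), and deferring it leaves the proof incomplete at its only genuinely hard point. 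The paper closes this gap by citing Boyd~\cite{Boyd(1998approximation)} directly, which is the statement you are attempting to reprove. To make your argument complete you should either cite that result or actually establish the uniform $L^{-\varepsilon}$ bound; note that finiteness of $\int|p|^{-\varepsilon}$ for each $p$ (semicontinuity of the log canonical threshold plus compactness of the normalized coefficient sphere) is not enough -- you need the integrals to be \emph{uniformly} bounded on a neighbourhood of $p^\ast$, which is an extra uniformity statement requiring its own proof.
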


\begin{proof}
  Let $i \colon \IZ^d \to \Gamma$ be an inclusion whose image has finite index in
  $\Gamma$.  Fix a map of sets $\sigma \colon \im(i)\backslash \Gamma \to \Gamma$ whose
  composition with the projection $\Gamma \to \im(i)\backslash \Gamma$ is the identity. Put
  $m = [\Gamma: \im(i)]$.  With this choice the finitely generated free
  $\IC[\IZ^d]$-module $i^* \IC \Gamma$ obtained from $\IC \Gamma$ by restriction with $i$
  inherits a preferred $\IC[\IZ^d]$-basis.  Hence there is a finite subset 
$T \subseteq   \IZ^d$ and a $\IC$-linear (and hence continuous) map
\[
i^* \colon \{A \in M_{n,n}(\IC \Gamma) \mid \supp_{\Gamma }(A) \subseteq S\} \to 
\{B \in M_{mn,mn}(\IC[\IZ^d]) \mid \supp_{\IZ^d}(B) \subseteq T\} 
\]
such that $i^*\Lambda^{\Gamma }(r_A) = \Lambda^{\IZ^d}(r_{i^*A})$. Since
\[
{\det}_{\caln(\IZ^d)}(i^*\Lambda^{\Gamma }(r_A)) = m \cdot {\det}_{\caln(\Gamma )}(r_A))
\]
holds for any $A \in M_{n,n}(\IC \Gamma)$ by~\cite[Theorem~3.14~(5) on page~128]{Lueck(2002)},
it suffices to prove the claim in the special case $\Gamma = \IZ^d$.

As $\det_{\IC[\IZ^d]} \colon M_{n,n}(\IC[\IZ^d]) \to  M_{1,1}(\IC[\IZ^d])$ is continuous
and for $A \in M_{n,n}(\IC \IZ^d)$ with $\supp_{\IZ^d}(A) \subset S$ we have
$\supp_{\IZ^d}(\det_{\IC[\IZ^d]}) \subseteq S^n$ for $S^n = \{g_1 \cdot g_2 \cdot \,\cdots \, \cdot g_n \mid g_i \in S\}$, 
we conclude from~\cite[Lemma~6.25]{Lueck(2015twisting)} 
that it suffices to treat the case $n = 1$.
Since the Mahler measure of a non-trivial element $p \in \IC[\IZ^d]$ is equal to
$\det_{\IC[\IZ^d]}\bigl(\Lambda^{\IZ^d}(r_p) \colon L^2(\IZ^d) \to L^2(\IZ^d)\bigr)$
and defined  to be zero for $p = 0$, Lemma~\ref{lem:continuity_of_det} 
follows from a continuity theorem for Mahler measures proved by  Boyd~\cite[p.~127]{Boyd(1998approximation)}.
\end{proof}

\begin{definition}[Quasi-fibered classes]
  \label{def:fibered_and_quasi_fibered}
  Let $N$ be a $3$-manifold.  We call an
  element $\phi \in H^1(N;\IR)$ \emph{quasi-fibered}, if there exists a sequence of
  fibered elements $\phi_n \in H^1(N;\IQ)$ converging to $\phi$ in $H^1(N;\IR)$.
\end{definition}

Notice that obviously any fibered $\phi$ is non-trivial.  The next theorem generalizes the
inequalities of Lemma~\ref{lem:mapping_tori_uniform_estimate} for fibered classes to
quasi-fibered classes. This theorem can be viewed as the key technical result of this
paper.

\begin{theorem}[Upper bound  in the quasi-fibered case]
\label{the:upper_bound_in_the_quasi_fibered_case}
Let $(M,\mu)$ be an admissible pair, $\mathfrak{s}\in  \spinc(M)$ and let $\phi \in H^1(M;\IR)$ be a quasi-fibered class. Then
\[
\begin{array}{rcll}
\rho^{(2)}(M,\mathfrak{s};\mu,\phi)(t)  &\le &  \tmfrac{1}{2}\big(\phi(c_1(\mathfrak{s}))+  x_M(\phi)\big)  \cdot \ln(t)    
  \quad  &\text{for}\; t  \le 1;
  \\[2mm]
\rho^{(2)}(M,\mathfrak{s};\mu,\phi)(t)  &\le& 
  \tmfrac{1}{2}\big(\phi(c_1(\mathfrak{s}))- x_M(\phi )\big)  \cdot \ln(t)    
  \quad & \text{for}\; t \ge 1.
  \end{array}
 \]
\end{theorem}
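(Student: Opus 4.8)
The plan is to deduce the quasi-fibered case from the fibered case, treated in Lemma~\ref{lem:mapping_tori_uniform_estimate}, by an approximation argument in the cohomology class; the obstruction is that the Fuglede--Kadison determinant over a general $\caln(G)$ is only upper semicontinuous, so one cannot directly pass to the limit in a sequence $\phi_n\to\phi$ of fibered rational classes. Fix such a sequence with $\phi_n\in H^1(M;\IQ)$ fibered and $\phi_n\to\phi$ in $H^1(M;\IR)$; since $M$ then fibers over $S^1$, Theorem~\ref{thm:dfl-fibered}~(1) shows that $(M,\mu)$, and more generally every admissible pair on $M$, is $L^2$-acyclic. The first move is to apply Theorem~\ref{the:calculation_of_L2-torsion_from_a_presentation} to $M$ and $\mathfrak{s}$, producing once and for all elements $s,s'\in\pi=\pi_1(M)$ and a single square matrix $A$ over $\IZ[\pi]$---depending on neither the homomorphism nor the class---with
\[
\rho^{(2)}(M,\mathfrak{s};\mu',\psi)(t)\ =\ -\ln\bigl({\det}_{\caln(G')}(\Lambda^{G'}\circ\eta_{\psi^*\IC_t}(r_{A}))\bigr)+\eta_\psi(t)
\]
for every $L^2$-acyclic admissible pair $(M,\mu'\colon\pi\to G')$ and every real class $\psi$, where the error term $\eta_\psi(t)$ is the explicit, $\psi$-continuous expression appearing there. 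This reduces the theorem to proving, for each fixed $t\le1$ (the case $t\ge1$ being symmetric), the single inequality
\[
\ln\bigl({\det}_{\caln(G)}(\Lambda^{G}\circ\eta_{\phi^*\IC_t}(r_{A}))\bigr)\ \ge\ \eta_\phi(t)-\tmfrac{1}{2}\bigl(\phi(c_1(\mathfrak{s}))+x_M(\phi)\bigr)\cdot\ln(t).
\]

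To prove this I would interpose virtually abelian quotients, where the needed continuity is available. Choosing, as in the proof of Lemma~\ref{lem:mapping_tori_uniform_estimate}, a nested sequence of normal subgroups $G\supseteq G_0\supseteq G_1\supseteq\cdots$ sitting inside the kernel of the \largehomoblank quotient of $G$, of finite index there, with $\bigcap_iG_i=\{1\}$, and setting $Q_i=G/G_i$, $\mu_i=\pr_i\circ\mu$ and $A_i=\pr_i(A)$, each $(M,\mu_i)$ is an $L^2$-acyclic admissible pair, the $\mu_i$ are again \largehomoblank homomorphisms, each $Q_i$ is virtually finitely generated free abelian, and every class in $H^1(M;\IR)$---in particular $\phi$ and all the $\phi_n$---factors through every $Q_i$. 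Applying Theorem~\ref{the:calculation_of_L2-torsion_from_a_presentation} to each $(M,\mu_i)$---the same $s,s'$ and the image $A_i$ of $A$ in $\IZ[Q_i]$---the goal over $Q_i$ becomes the analogous determinant inequality for $\phi$. For fixed $i$ the twisted matrix $\eta_{\psi^*\IC_t}(r_{A_i})$ has entries depending continuously on $\psi$ and supported in a fixed finite subset of $Q_i$, so Lemma~\ref{lem:continuity_of_det} makes $\psi\mapsto\ln\bigl({\det}_{\caln(Q_i)}(\Lambda^{Q_i}\circ\eta_{\psi^*\IC_t}(r_{A_i}))\bigr)$ continuous. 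Feeding each fibered $\phi_n$ into Lemma~\ref{lem:mapping_tori_uniform_estimate} for $(M,\mu_i)$, rewriting via Theorem~\ref{the:calculation_of_L2-torsion_from_a_presentation}, and letting $n\to\infty$---using this continuity together with continuity of $\psi\mapsto\eta_\psi(t)$, of $\psi\mapsto\psi(c_1(\mathfrak{s}))$, and of the Thurston seminorm on $H^1(M;\IR)$---yields the desired determinant inequality over $Q_i$, for every $i$. Finally, taking $\limsup_{i\to\infty}$ and invoking the determinant half of the Twisted Approximation inequality (Theorem~\ref{the:Twisted_Approximation_inequality}) for $\phi\colon G\to\IR$ with the subgroups $G_i$ and the matrix $A$ transfers the inequality back to $G$, which is what we reduced to.

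The step I expect to be the main obstacle is precisely this limit $\phi_n\to\phi$: Lemma~\ref{lem:mapping_tori_uniform_estimate} already gives the bound for each fibered $\phi_n$ and for all $t$, but $\psi\mapsto\rho^{(2)}(M,\mathfrak{s};\mu,\psi)(t)$ is not continuous over a general group, so the limit cannot be taken on the nose. The cure is the two-step detour above: push everything to the quotients $Q_i$, where continuity of the Fuglede--Kadison determinant does hold (Lemma~\ref{lem:continuity_of_det}, resting ultimately on Boyd's continuity theorem for Mahler measures), take the limit $\phi_n\to\phi$ there, and return to $G$ only through the \emph{one-sided} (hence limit-friendly) approximation inequality of Theorem~\ref{the:Twisted_Approximation_inequality}. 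What makes these fit together is that Theorem~\ref{the:calculation_of_L2-torsion_from_a_presentation} supplies a single matrix and a single error term valid simultaneously for $G$, for all the $Q_i$, and for all classes involved; with that in hand the rest is careful bookkeeping of the iterated limits in the correct order (first $n$ at fixed $i$, then $i$). A minor point to check along the way---handled exactly as in the proof of Lemma~\ref{lem:mapping_tori_uniform_estimate}---is the existence of the nested family $(G_i)$ with virtually abelian quotients through which all the relevant classes factor.
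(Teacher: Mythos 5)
Your proposal is correct and follows essentially the same route as the paper: the paper's proof likewise fixes the single matrix $A$ and error term from Theorem~\ref{the:calculation_of_L2-torsion_from_a_presentation}, first passes from the fibered $\phi_n$ to $\phi$ over each virtually abelian quotient $Q_i$ via Lemma~\ref{lem:continuity_of_det} (its first claim), and then transfers back to $G$ via the one-sided determinant inequality of Theorem~\ref{the:Twisted_Approximation_inequality} (its second claim). The order of limits and the role of each ingredient match the paper's argument exactly.
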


\begin{proof} 
We only treat the case, where $\partial M$ is empty, the other case is completely analogous: in the proof below  one needs to replace
Theorem~\ref{the:calculation_of_L2-torsion_from_a_presentation} (2) by
Theorem~\ref{the:calculation_of_L2-torsion_from_a_presentation} (1).
We write $\pi=\pi_1(M)$ and we pick $\mathfrak{s}\in  \spinc(M)$. 

First recall that our assumption that $\mu\colon \pi\to G$ is \largehomoblank implies that
the projection $\pi\to H_1(M)_f$ factors through $\mu$ and a map $\nu\colon G\to
H_1(M)_f$. Since $G$ is residually finite we can choose a sequence of normal subgroups of
$G$
\[
G  \supseteq G_0 \supseteq G_1 \supseteq G_2 \supseteq \cdots 
\]
such that $G_i$ is contained in $\ker(\nu\colon G\to H_1(M)_f)$, the index $[\ker(\nu) :
G_i]$ is finite for $i \ge 0$ and the intersection $\bigcap_{i \ge 0} G_i$ is trivial. Put
$Q_i := G/G_i$.  Denote by $\mu_i \colon \pi \to Q_i $ the composition of the projection
$\pr_i \colon G \to Q_i$ with $\mu$. Note that $\mu_i$ is again a \largehomoblank
homomorphism. Recall that this implies in particular that we can make the identifications
\[ H^1(M;\IR)=\operatorname{Hom}(H_1(\pi)_f,\IR)=
\operatorname{Hom}(\pi,\IR)=\operatorname{Hom}(G,\IR)=\operatorname{Hom}(G_i,\IR).\]

We apply Theorem~\ref{the:calculation_of_L2-torsion_from_a_presentation}~(2) to $M$ and
$\mathfrak{s}$.  We denote the resulting square matrix over $\IZ[\pi]$ by $A$ and the
resulting elements in $\pi$ by $s,s'$. For each $i\in \IN$ we write $A_i=\pr_i(A)$.
Define for any homomorphism $\psi \colon H_1(M)_f \to \IR$
\begin{eqnarray*}
\xi(\psi)(t)  
& = & \max\big\{0,\bigl(|\psi \circ \nu \circ \mu(s)| + |\psi \circ \nu \circ \mu(s')|\bigr)\cdot \ln(t)\big\}.
\end{eqnarray*}

We start out with the following claim.

\begin{claim}
For each $i\in \IN$ we have the inequalities
\[
\begin{array}{lcll}
\rho^{(2)}(M,\mathfrak{s};\mu_i,\phi_i)(t) & \le&   \frac{1}{2}\big(\phi(c_1(\mathfrak{s}))+  x_M(\phi)\big)  \cdot \ln(t)    
  \quad & \text{for}\; t  \le 1;\\[2mm]
\rho^{(2)}(M,\mathfrak{s};\mu_i,\phi_i)(t) & \le &
  \frac{1}{2}\big(\phi(c_1(\mathfrak{s}))- x_M(\phi)\big)  \cdot \ln(t)    
  \quad & \text{for}\; t \ge 1.
\end{array}
\]
\end{claim}

Let $i\in\IN$. 
Since $\phi \in H^1(M;\IR)$ is quasi-fibered, there exists a sequence of fibered elements
$\phi_n \in H^1(M;\IQ)$ converging to $\phi$. 
By
Lemma~\ref{lem:mapping_tori_uniform_estimate}
we know that for each $i$ and $n$  we have 
\begin{eqnarray}\label{Lahm_I}
\rho^{(2)}(M,\mathfrak{s};\mu_i,\phi_n)(t) & \le  & \tmfrac{1}{2}\big(\phi_n(c_1(\mathfrak{s}))+  x_M(\phi_n)\big)  \cdot \ln(t)    
  \quad  \text{for}\; t  \le 1;\\
\label{Lahm_II}
\rho^{(2)}(M,\mathfrak{s};\mu_i,\phi_n)(t)&  \le &
  \tmfrac{1}{2}\big(\phi_n(c_1(\mathfrak{s}))- x_M(\phi_n)\big)  \cdot \ln(t)    
  \quad  \text{for}\; t \ge 1.
\end{eqnarray}
By  Theorem~\ref{the:calculation_of_L2-torsion_from_a_presentation}~(2)  we have
\begin{eqnarray}
\label{Boateng_I}
\hspace{10mm}  \rho^{(2)}(M,\mathfrak{s};\mu_i,\phi_n)(t)
& = & 
\xi(\phi_n)(t)-\ln \bigl({\det}_{\caln(Q_i)}(\Lambda^{Q_i} \circ \eta_{\phi_n^* \IC_t}(r_{A_i}))\bigr);
\\
\label{Boateng_II}
\rho^{(2)}(M,\mathfrak{s};\mu_i,\phi)(t)
& = & 
\xi(\phi)(t)-\ln \bigl({\det}_{\caln(Q_i)}(\Lambda^{Q_i} \circ \eta_{\phi^* \IC_t}(r_{A_i}))\bigr).
\end{eqnarray}

Since $\phi_n$ converges to $\phi$ and the kernel of the projection map $Q_i  \to H_1(M)_f$ is finite, we get  
from Lemma~\ref{lem:continuity_of_det} that
\[
 \lim_{n \to \infty}\ln \bigl({\det}_{\caln(Q_i)}(\Lambda^{Q_i} \circ \eta_{\phi_n^* \IC_t}(r_{A_i}) \bigr)\bigr) 
 = 
\ln \bigl({\det}_{\caln(Q_i)}(\Lambda^{Q_i} \circ \eta_{\phi^* \IC_t}(r_{A_i})\bigr)\bigr).\]
This equality, together with Equations~\eqref{Boateng_I} and~\eqref{Boateng_II}  and the observation 
that for any $t\in (0,\infty)$ the equality  $ \lim_{n \to \infty} \xi(\phi_n)(t)=\xi(\phi)(t)$ holds, implies that 
\begin{eqnarray*}
\rho^{(2)}(M,\mathfrak{s};\mu_i,\phi)(t)&=&
\lim_{n \to \infty} \rho^{(2)}(M,\mathfrak{s};\mu_i,\phi_n)(t) 
  \quad \text{for all }t\in (0,\infty).
\end{eqnarray*}
The desired inequalities for $\rho^{(2)}(M,\mathfrak{s};\mu_i,\phi)(t)$ now follow from (\ref{Lahm_I}) and (\ref{Lahm_II}).
This concludes the proof of the claim.

Now the theorem  follows from the claim we just proved and the following claim.

\begin{claim}
For each $t\in (0,\infty)$ we have 
\[
\rho^{(2)}(M,\mathfrak{s},\mu,\phi)(t)
 \le 
\liminf_{i \to \infty} \rho^{(2)}(M,\mathfrak{s},\mu_i,\phi_i)(t).\]
\end{claim}

This claim is proved as follows. By Theorem~\ref{thm:dfl-fibered} 
we know that  the pairs $(M,\mu)$ and $(M,\mu_i)$ are  $L^2$-acyclic.
By  Theorem~\ref{the:calculation_of_L2-torsion_from_a_presentation}~(2) we have
\begin{eqnarray}
\label{Boateng_III}
\rho^{(2)}(M,\mathfrak{s},\mu,\phi)(t)
& = & 
\xi(\phi)(t)-\ln \bigl({\det}_{\caln(G)}(\Lambda^G \circ \eta_{\phi^* \IC_t}(r_A))\bigr).
\end{eqnarray}
Recall that the kernel of $Q_i \to H_1(M)_f$ is finite und
that $Q_i \to H_1(M)_f$ is surjective.
Now we apply Theorem~\ref{the:Twisted_Approximation_inequality}
to $\phi  \colon G \to \IR$. For all $t\in (0,\infty)$ we obtain
\begin{eqnarray*}
\ln \bigl({\det}_{\caln(G)}(\Lambda^G \circ \eta_{\phi^* \IC_t}(r_{A}))\bigr)  
&  \ge & 
\limsup_{i \to \infty} \ln \bigl({\det}_{\caln(Q_i)}(\Lambda^{Q_i} \circ \eta_{\phi_i^* \IC_t}(r_{A_i}))\bigr).
\end{eqnarray*}
Now apply~\eqref{Boateng_II} and~\eqref{Boateng_III}. This finishes the proof
of Theorem~\ref{the:upper_bound_in_the_quasi_fibered_case}.
\end{proof}

For convenience we also state the result which follows from combining
Theorem~\ref{the:lower_bound} with
Theorem~\ref{the:upper_bound_in_the_quasi_fibered_case}.

\begin{theorem}[Lower and upper bounds combined in the quasi-fibered case]
  \label{the:Lower_and_upper_bounds_in_the_quasi-fibered_case}
  Let $M\ne S^1\times D^2$ be an irreducible $3$-manifold with infinite fundamental group
  $\pi$.  Let $\phi \in H^1(M;\IQ)$ be a quasi-fibered class.


Then there exists a $D\in \IR$ such that for any $\mathfrak{s}\in \spinc(M)$ and any
\largehomoblank homomorphism $\mu\colon \pi_1(M)\to G$, where $G$ is residually finite and
countable, the pair $(M,\mu)$ is $L^2$-acyclic and such that for $t\leq 1$
\begin{align*}
\begin{array}{crcccl}
\tmfrac{1}{2}\big(\phi(c_1(\mathfrak{s}))+  x_M(\phi)\big)   \ln t  - D  
\hspace{-0.2cm}& \le\hspace{-0.2cm} & 
\rho^{(2)}(M,\mathfrak{s};\mu,\phi)(t)\hspace{-0.2cm}& \le \hspace{-0.2cm}&  \tmfrac{1}{2}\big(\phi(c_1(\mathfrak{s}))+  x_M(\phi)\big)  \ln t     
\intertext{and such that for $t\ge 1$}
  \tmfrac{1}{2}\big(\phi(c_1(\mathfrak{s}))-  x_M(\phi)\big)   \ln t  - D \hspace{-0.2cm} & \le \hspace{-0.2cm} &
\rho^{(2)}(M,\mathfrak{s};\mu,\phi)(t)\hspace{-0.2cm} & \le \hspace{-0.2cm} &
  \tmfrac{1}{2}\big(\phi(c_1(\mathfrak{s}))- x_M(\phi )\big)  \ln t.
  \end{array}
 \end{align*}
 In particular we get
 \[
 \deg\bigl({\rho}(M,\mathfrak{s};\mu, \phi)\bigr) = - x_M(\phi).
 \]
\end{theorem}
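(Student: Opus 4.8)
The plan is to assemble the two estimates already established, namely the lower bound of Theorem~\ref{the:lower_bound} and the upper bound of Theorem~\ref{the:upper_bound_in_the_quasi_fibered_case}, and then to read the degree off from the resulting sandwich of affine functions of $\ln(t)$.

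First I would record that $(M,\mu)$ is $L^2$-acyclic for every \largehomoblank homomorphism $\mu\colon \pi_1(M)\to G$ with $G$ residually finite and countable. Since $\phi$ is quasi-fibered there is at least one fibered class on $M$, so $M$ fibers over $S^1$; as $M$ is irreducible and not homeomorphic to $S^1\times D^2$, Theorem~\ref{thm:dfl-fibered}~(1) then gives the claim. In particular $(M,\mu)$ is an $L^2$-acyclic admissible pair, so $\rho^{(2)}(M,\mathfrak{s};\mu,\phi)$ is defined and the properties listed in Theorem~\ref{the:Properties_of_the_twisted_L2-torsion_function} are available.

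Next I would apply Theorem~\ref{the:lower_bound} to $\phi$ and a fixed $\mathfrak{s}_0\in\spinc(M)$, producing a constant $D\ge 0$ that depends on $\phi$ and $\mathfrak{s}_0$ but not on $\mu$ and that yields the left-hand inequalities for $\mathfrak{s}_0$. Passing from $\mathfrak{s}_0$ to $h\mathfrak{s}_0$ for $h\in H_1(M;\IZ)$ changes $\rho^{(2)}(M,\mathfrak{s};\mu,\phi)(t)$ by $\phi(h)\ln(t)$ by Theorem~\ref{the:Properties_of_the_twisted_L2-torsion_function}~(2), and by \eqref{equ:c1he} it changes $\tfrac12\bigl(\phi(c_1(\mathfrak{s}))\pm x_M(\phi)\bigr)\ln(t)$ by the same amount, so the very same $D$ works for every $\mathfrak{s}\in\spinc(M)$. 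For the upper bounds I would invoke Theorem~\ref{the:upper_bound_in_the_quasi_fibered_case} directly; it already holds for all $\mathfrak{s}$ with no additive constant. Combining the two yields the displayed double inequalities.

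Finally I would compute the degree. Writing $a=\tfrac12\bigl(\phi(c_1(\mathfrak{s}))+x_M(\phi)\bigr)$ and $b=\tfrac12\bigl(\phi(c_1(\mathfrak{s}))-x_M(\phi)\bigr)$, the inequalities for $t\ge 1$ read $b\ln(t)-D\le\rho^{(2)}(M,\mathfrak{s};\mu,\phi)(t)\le b\ln(t)$; dividing by $\ln(t)>0$ and letting $t\to\infty$, so that $D/\ln(t)\to 0$, forces $\rho^{(2)}(M,\mathfrak{s};\mu,\phi)(t)/\ln(t)\to b$, whence $\limsup_{t\to\infty}\rho(t)/\ln(t)=b$. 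Likewise the inequalities for $t\le 1$ read $a\ln(t)-D\le\rho^{(2)}(M,\mathfrak{s};\mu,\phi)(t)\le a\ln(t)$; here $\ln(t)<0$ reverses the inequalities on division, and letting $t\to 0$ again kills $D/\ln(t)$ and forces $\rho^{(2)}(M,\mathfrak{s};\mu,\phi)(t)/\ln(t)\to a$, whence $\liminf_{t\to 0}\rho(t)/\ln(t)=a$. Subtracting gives $\deg\bigl(\rho(M,\mathfrak{s};\mu,\phi)\bigr)=b-a=-x_M(\phi)$. I do not expect a genuine obstacle: all the analytic and topological content sits in Theorems~\ref{the:lower_bound} and~\ref{the:upper_bound_in_the_quasi_fibered_case}, and the only points needing attention are the uniformity of $D$ in $\mathfrak{s}$ (handled by the translation argument above) and the sign change in the limit $t\to 0$.
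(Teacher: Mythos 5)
Your proposal is correct and takes essentially the same route as the paper, which gives no separate argument but simply notes that the result follows by combining Theorem~\ref{the:lower_bound} with Theorem~\ref{the:upper_bound_in_the_quasi_fibered_case}. The details you supply --- $L^2$-acyclicity via Theorem~\ref{thm:dfl-fibered}~(1), uniformity of $D$ over all $\mathfrak{s}$ via the translation by $\phi(h)\ln(t)$ on both sides, and the sign-aware limit computation for the degree --- are exactly the points the paper leaves implicit.
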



 \typeout{-----------   Section 5: Lower and upper bounds  combined -----------------------------------}

\section{Proof of the main theorem}
The following is the main theorem of this paper.

\begin{theorem}[Main theorem]
\label{the:Lower_and_upper_bounds}
Let $M$ be an irreducible $3$-manifold  with
infinite fundamental group $\pi$
which is not a closed graph manifold and not homeomorphic to $S^1 \times D^2$. 
Let $\mathfrak{s}\in  \spinc(M)$ and write $\pi=\pi_1(M)$.

Then there exists a \largehomoblank epimorphism $\alpha\colon \pi\to \Gamma$ to a
virtually finitely generated free abelian group such that the following holds: For any
$\phi \in H^1(M;\IQ)$ and any factorization of $\alpha \colon \pi \to \Gamma$ into group
homomorphisms $\pi \xrightarrow{\mu} G \xrightarrow{\nu} \Gamma$ for a residually finite
countable group $G$, there exists a real number $D$ depending only on $\phi$ but not on
$\mu$ such that for $t\leq 1$
\begin{align*}
\begin{array}{crcccr}
\tmfrac{1}{2}\big(\phi(c_1(\mathfrak{s}))+  x_M(\phi)\big)   \ln t  - D  
\hspace{-0.2cm}& \le\hspace{-0.2cm} & 
\rho^{(2)}(M,\mathfrak{s};\mu,\phi)(t)\hspace{-0.2cm}& \le \hspace{-0.2cm}&  
\tmfrac{1}{2}\big(\phi(c_1(\mathfrak{s}))+  x_M(\phi)\big)  \ln t     
\intertext{and such that for $t\ge 1$}
  \tmfrac{1}{2}\big(\phi(c_1(\mathfrak{s}))-  x_M(\phi)\big)   \ln t  - D \hspace{-0.2cm} & \le \hspace{-0.2cm} &
\rho^{(2)}(M,\mathfrak{s};\mu,\phi)(t)\hspace{-0.2cm} & \le \hspace{-0.2cm} &
  \tmfrac{1}{2}\big(\phi(c_1(\mathfrak{s}))- x_M(\phi )\big)  \ln t.
  \end{array}
 \end{align*}
 In particular we get
 \[
 \deg\bigl({\rho}(M,\mathfrak{s};\mu, \phi)\bigr) = - x_M(\phi).
 \]
\end{theorem}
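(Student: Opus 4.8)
\emph{Strategy.} The lower bounds in the statement are precisely the content of Theorem~\ref{the:lower_bound}, whose constant $D$ depends on $\phi$ (and on the fixed $\mathfrak{s}$) but not on $\mu$; so the whole point is to establish the matching \emph{upper} bounds $\rho^{(2)}(M,\mathfrak{s};\mu,\phi)(t)\le\tmfrac{1}{2}\big(\phi(c_1(\mathfrak{s}))+x_M(\phi)\big)\ln t$ for $t\le 1$ and with $-x_M(\phi)$ for $t\ge 1$. Once both inequalities are in hand, the equality $\deg\big(\rho(M,\mathfrak{s};\mu,\phi)\big)=-x_M(\phi)$ is immediate: dividing the $t\ge 1$ sandwich by $\ln t>0$ gives $\limsup_{t\to\infty}\rho(t)/\ln t=\tmfrac{1}{2}(\phi(c_1(\mathfrak{s}))-x_M(\phi))$, and dividing the $t\le 1$ sandwich by $\ln t<0$ (so the inequalities flip and $D/\ln t\to 0$) gives $\liminf_{t\to 0}\rho(t)/\ln t=\tmfrac{1}{2}(\phi(c_1(\mathfrak{s}))+x_M(\phi))$, whose difference is $-x_M(\phi)$. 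As in Lemma~\ref{lem:primitiveisenough} it suffices to treat primitive $\phi\in H^1(M;\IZ)$, and if $b_1(M)=0$ everything is trivial, so assume $b_1(M)\ge 1$.

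\emph{Construction of $\alpha$.} Here I would invoke the virtual fibering machinery. Since $M$ is irreducible, has infinite fundamental group, and is neither $S^1\times D^2$ nor a closed graph manifold, the combined work of Agol~\cite{Agol(2008),Agol(2013)}, Wise~\cite{Wise(2012hierachy)} and Przytycki--Wise~\cite{Przytycki-Wise(2012)} shows that $\pi$ is virtually RFRS, and Agol's RFRS fibering criterion then produces a finite regular covering $q\colon\widehat{M}\to M$, corresponding to a finite-index normal subgroup $\widehat{\pi}\trianglelefteq\pi$ of index $n$, such that $\widehat{M}$ fibers over $S^1$ and such that \emph{every} class $q^*\phi$ with $\phi\in H^1(M;\IQ)$ is quasi-fibered in $\widehat{M}$ (this is where one organizes Agol's face-by-face procedure inside a single cofinal RFRS tower so as to handle simultaneously the finitely many faces of the Thurston ball met by the rational subspace $q^*(H^1(M;\IR))$). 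Set $K:=\ker\big(\widehat{\pi}\to H_1(\widehat{\pi};\IZ)_f\big)$; it is characteristic in $\widehat{\pi}$, hence normal in $\pi$, and I take $\Gamma:=\pi/K$ with $\alpha\colon\pi\to\Gamma$ the projection. Then $\widehat{\pi}/K\cong H_1(\widehat{M};\IZ)_f\cong\IZ^{b_1(\widehat{M})}$ has finite index in $\Gamma$, so $\Gamma$ is virtually finitely generated free abelian; and a transfer argument shows that $K$ maps into the torsion of $H_1(M;\IZ)$, so $\alpha$ is \largehomo.

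\emph{Descent.} Fix $\phi$ and a factorization $\pi\xrightarrow{\mu}G\xrightarrow{\nu}\Gamma$ with $G$ residually finite countable. Since $\nu\circ\mu=\alpha$ we get $\ker\mu\subseteq\ker\alpha=K\subseteq\widehat{\pi}$, so $\widehat{\mu}:=\mu|_{\widehat{\pi}}$ satisfies $\ker\widehat{\mu}=\ker\mu\subseteq K$; hence $\widehat{\mu}$ is \largehomo\ and $(\widehat{M},\widehat{\mu})$ is an admissible pair, which is moreover $L^2$-acyclic because $\widehat{M}$ fibers (Theorem~\ref{thm:dfl-fibered}~(1)), and then $(M,\mu)$ is $L^2$-acyclic as well by the restriction formula for $L^2$-Betti numbers. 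Applying Theorem~\ref{the:upper_bound_in_the_quasi_fibered_case} to the admissible pair $(\widehat{M},\widehat{\mu})$, the $\spinc$-structure $\widehat{\mathfrak{s}}:=q^*\mathfrak{s}$, and the quasi-fibered class $\widehat{\phi}:=q^*\phi$, yields $\rho^{(2)}(\widehat{M},\widehat{\mathfrak{s}};\widehat{\mu},\widehat{\phi})(t)\le\tmfrac{1}{2}\big(\widehat{\phi}(c_1(\widehat{\mathfrak{s}}))+x_{\widehat{M}}(\widehat{\phi})\big)\ln t$ for $t\le 1$ and with $-x_{\widehat{M}}(\widehat{\phi})$ for $t\ge 1$. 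Now I use Gabai's formula $x_{\widehat{M}}(q^*\phi)=n\cdot x_M(\phi)$ from~\eqref{finite_coverings_Thurston_norm}, the compatibility of $\spinc$-Chern classes with finite covers, which gives $\widehat{\phi}(c_1(\widehat{\mathfrak{s}}))=n\cdot\phi(c_1(\mathfrak{s}))$, and the covering formula $\rho^{(2)}(\widehat{M},\widehat{\mathfrak{s}};\widehat{\mu},\widehat{\phi})(t)=n\cdot\rho^{(2)}(M,\mathfrak{s};\mu,\phi)(t)$ of Theorem~\ref{the:Properties_of_the_twisted_L2-torsion_function}~(3); dividing the displayed inequalities by $n$ produces exactly the desired upper bounds for $\rho^{(2)}(M,\mathfrak{s};\mu,\phi)$. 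Combining with Theorem~\ref{the:lower_bound} gives the full sandwich with a constant $D$ depending only on $\phi$.

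\emph{Main obstacle.} The only nonformal ingredient is the existence of a \emph{single} finite cover $\widehat{M}$ in which all of the rational subspace $q^*(H^1(M;\IQ))$ becomes quasi-fibered: one must run Agol's virtual fibering argument for RFRS $3$-manifolds uniformly over the finitely many Thurston faces met by that subspace inside one cofinal tower, using that a class once made fibered stays fibered in every further cover. Everything else — the reduction to primitive integral $\phi$, the $\spinc$- and Chern-class bookkeeping under $q$, the $L^2$-acyclicity statement, and the passage from the sandwich to the degree equality — is routine given the results quoted above.
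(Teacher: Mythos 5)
Your proposal is correct and follows essentially the same route as the paper: one finite regular cover in which every rational class becomes quasi-fibered (via Agol--Wise--Przytycki--Wise, as packaged in~\cite[Section~10]{Dubois-Friedl-Lueck(2014Alexander)}), the group $\Gamma=\pi/K$ with $K$ the image of $\ker\bigl(\pi_1(\widehat M)\to H_1(\widehat M)_f\bigr)$, and descent of the quasi-fibered upper bound through the covering formula, combined with the lower bound of Theorem~\ref{the:lower_bound}. The only bookkeeping difference is that you keep $\widehat{\mu}=\mu|_{\widehat{\pi}}$ with target $G$, whereas the paper corestricts to the image $G'\subseteq G$ so that the \largehomoblank condition holds literally for the covering pair; this changes nothing because $\rho^{(2)}$ is invariant under that corestriction.
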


\begin{proof}
  As explained in~\cite[Section~10]{Dubois-Friedl-Lueck(2014Alexander)}, we conclude from
  combining~\cite{Agol(2008),Agol(2013),Liu(2013),Przytycki-Wise(2012),Przytycki-Wise(2014),Wise(2012raggs),Wise(2012hierachy)}
  that there exists a finite regular covering $p \colon N \to M$ such that for any $\phi
  \in H^1(M;\IR)$ its pullback $p^*\phi \in H^1(N;\IR)$ is quasi-fibered.  Let $k$ be the
  number of sheets of $p$.  Let $\pr_N \colon \pi_1(N) \to H_1(N)_f$ and $\pr_M \colon
  \pi_1(M) \to H_1(M)_f$ be the canonical projections.  The kernel of $\pr_N$ is a
  characteristic subgroup of $\pi_1(N)$. The regular finite covering $p$ induces an
  injection $\pi_1(p) \colon \pi_1(N) \to \pi_1(M)$ such that the image of $\pi_1(p)$ is a
  normal subgroup of $\pi_1(M)$ of finite index.  Let $\Gamma$ be the quotient of
  $\pi_1(M)$ by the normal subgroup $\pi_1(p)(\ker(\pr_N))$.  Let $\alpha \colon \pi_1(M)
  \to \Gamma$ be the projection.  Because of $H_1(p;\IZ)_f \circ \pr_N = \pr_M \circ \,
  \pi_1(p)$ we know that $\pi_1(p)(\ker(\pr_N))$ is contained in the kernel of the
  canonical projection $\pr_M \colon \pi_1(M)\to H_1(M)_f$. This implies that
  $\alpha\colon \pi_1(M)\to \Gamma$ is \largehomo, which means in particular that there
  exists precisely one epimorphism $\beta \colon \Gamma \to H_1(M)_f$ satisfying $\pr_M =
  \beta \circ \alpha$.  Moreover, $\alpha \circ \pi_1(p)$ factorizes over $\pr_N$ into an
  injective homomorphism $j \colon H_1(N)_f \to \Gamma$ with finite cokernel.  Hence
  $\Gamma$ is virtually finitely generated free abelian.

 Consider any factorization of the homomorphism $\alpha \colon \pi_1(M) \to \Gamma$
into group homomorphisms 
$\pi_1(M) \xrightarrow{\mu} G \xrightarrow{\nu} \Gamma$ 
with  residually finite countable $G$.

Let $G' $ be the quotient of
$\pi_1(N)$ by the normal subgroup $\pi_1(p)^{-1}(\ker(\mu))$ and
$\mu' \colon \pi_1(N) \to G'$ be the projection. Since the kernel of
$\mu'$ and of $\mu \circ \pi_1(p)$ agree, there is precisely
one injective group homomorphism  $i \colon G' \to G$ satisfying 
$\mu \circ \pi_1(p) = i \circ \mu'$.  The kernel of $\mu'$ is contained in the kernel of 
$\pr_N \colon \pi_1(N) \to H_1(N)_f$ since $j$ is injective and we have
$j \circ \pr_N = \nu \circ i \circ \mu'$. Hence there is precisely one group homomorphism
$\nu' \colon G' \to H_1(N)_f$ satisfying $\nu' \circ \mu' = \pr_N$. In particular $\mu'$ is a 
\largehomoblank homomorphism. One easily checks that 
the following diagram commutes, and all vertical arrows are injective, the indices 
$[\pi_1(N): \im(\pi_1(p)]$ and $[\Gamma: H_1(N)_f]$ are finite,
and $\mu'$, $\nu'$ and $\beta$ are surjective.
\[
\xymatrix{\pi_1(N) \ar[r]^-{\mu'} \ar[d]^{\pi_1(p)} \ar@/^{5mm}/[rr]^{\pr_N}
& 
G' \ar[r]^-{\nu'} \ar[d]^i
& 
H_1(N)_f \ar[rd]^{H_1(p)_f} \ar[d]^j
\\
\pi_1(M) \ar[r]^-{\mu} \ar@/_{5mm}/[rr]_-{\alpha} \ar@/_{10mm}/[rrr]_{\pr_M}
& 
G \ar[r]^{\nu}
&
\Gamma \ar[r]^-{\beta}
& 
H_1(M)_f 
}
\]
Since $G$ is residually finite and countable, the group $G'$ is residually finite and countable. 

Now let $\mathfrak{s}\in \spinc(M)$ and let $\phi\in
H^1(M;\IQ)=\operatorname{Hom}(H_1(M)_f;\IQ)$.  We write $\mathfrak{s}'=p^*(\mathfrak{s})$
  and $\phi'=p^*(\phi)$. Furthermore we put $c=c_1(\mathfrak{s})$ and
$c'=c_1(\mathfrak{s}')$.  Since $\phi'\in H^1(N;\IQ)=\operatorname{Hom}(H_1(N)_f;\IQ)$ is
quasi-fibered we can appeal to
Theorem~\ref{the:Lower_and_upper_bounds_in_the_quasi-fibered_case}.  In our context it
says that $(N,\mu')$ is $L^2$-acyclic and that there exists a real number $D'$ depending
only on $\phi'$ but not on $\mu'$ such that for $t\leq 1$
\begin{align*}
\begin{array}{crcccr}
\hspace{0.4cm}
\tmfrac{1}{2}\big(\phi'(c')+  x_N(\phi')\big)   \ln t  - D'  
\hspace{-0.2cm}& \le\hspace{-0.2cm} & 
\rho^{(2)}(N,\mathfrak{s}';\mu',\phi')(t)\hspace{-0.2cm}& \le \hspace{-0.2cm}&  \tmfrac{1}{2}\big(\phi'(c')+  x_N(\phi')\big)  \ln t     
\intertext{and such that for $t\ge 1$}
  \tmfrac{1}{2}\big(\phi'(c')-  x_N(\phi')\big)   \ln t  - D' \hspace{-0.2cm} & \le \hspace{-0.2cm} &
\rho^{(2)}(N,\mathfrak{s}';\mu,\phi')(t)\hspace{-0.2cm} & \le \hspace{-0.2cm} &
  \tmfrac{1}{2}\big(\phi'(c')- x_N(\phi' )\big)  \ln t.
  \end{array}
 \end{align*}
We now set $D:=\frac{1}{k}D'$. 
The theorem now follows from these inequalities and the following equalities
\[\begin{array}{rcl}
x_M(\phi)&=&\frac{1}{k}x_N(\phi')\\[2mm]
\rho^{(2)}(M,\mathfrak{s};\mu,\phi)(t)&=&\frac{1}{k}\rho^{(2)}(N,\mathfrak{s}';\mu',\phi')(t)\quad \mbox{for all }t\\[2mm]
\phi(c_1(\mathfrak{s}))&=&\frac{1}{k}\phi'(c_1(\mathfrak{s}')).
\end{array}
\]
Here the first equality is (\ref{finite_coverings_Thurston_norm}) and the second one 
Theorem~\ref{the:Properties_of_the_twisted_L2-torsion_function} (3).
The third one follows easily from the definitions.
\end{proof}

\begin{remark}[Graph manifolds]\label{rem:graph_manifolds}
  The proof of Theorem~\ref{the:Lower_and_upper_bounds_in_the_quasi-fibered_case} does not
  cover closed graph manifolds. However, for a graph manifold $M$ together with a
  \largehomoblank homomorphism $\mu\colon \pi_1(M)\to G$, for which $(M,\mu)$ is
  $L^2$-acyclic, together with a class $\phi \in H^1(M;\IR)$ the $L^2$-torsion function
  has been computed explicitly in~\cite[Theorem~8.2]{Dubois-Friedl-Lueck(2014Alexander)} and in~\cite{Herrmann(2015)}
  to be
\[
\overline{\rho}^{(2)}(M;\mu,\phi)(t)  \doteq \min\{0,- x_M(\phi) \cdot \ln(t)\},
\]
provided that the image of the regular fiber under $\mu$ 
is an element of infinite order and $M$ is neither  $S^1 \times D^2$ nor $S^1 \times S^2$.
This implies
\[
\deg\bigl(\overline{\rho}^{(2)}(M;\mu,\phi)\bigr)   = - x_M(\phi).
\]
\end{remark}

\begin{remark}[The role of $\Gamma$] \label{rem:role_of_Gamma} In
  Theorem~\ref{the:Lower_and_upper_bounds} the group $\Gamma$ is in some sense optimal.
  Namely, one cannot expect $\Gamma = H_1(M)_f$ and $\beta = \id_{\Gamma}$ in
  Theorem~\ref{the:Lower_and_upper_bounds}. For instance, let $K \subseteq S^3$ be a
  non-trivial knot. Take $M$ to be the 
  $3$-manifold given by the complement of an open tubular
  neighborhood of the knot. Then $\deg(\overline{\rho}(M;\mu,\phi))$ for 
  $\mu \colon   \pi_1(M) \to H_1(M)_f$ the canonical projection and 
  $\phi \colon H_1(M)_f   \xrightarrow{\cong} \IZ$ an isomorphism is 
  just the degree of the Alexander polynomial
  of the knot $K$ which is not the Thurston norm $x_M(\phi)$ in general,
  see~\cite[Section~7.3]{Dubois-Friedl-Lueck(2014Alexander)}.
\end{remark}

\begin{example}[$S^1\times D^2$ and $S^1 \times S^2$]
\label{exa:S1_timesD2_and_S1_times_S2}
Consider a homomorphism $\phi \colon H_1(S^1 \times D^2) \xrightarrow{\cong} \IZ$.
Let $k$ be the index $[\IZ : \im(\phi)]$ if $\phi$ is non-trivial, and let $k = 0$ if $\phi$ is trivial.
Then we conclude from
Theorem~\ref{the:Properties_of_the_twisted_L2-torsion_function}~(4),~\eqref{x_for_fiber_bundles},  
and~\cite[Theorem~7.10]{Lueck(2015twisting)}
\begin{eqnarray*}
x_{S^1 \times D^2}(\phi) & = & 0;
\\
\deg\big(\overline{\rho}(\widetilde{S^1 \times D^2};\phi)\big) 
& = & k.
\end{eqnarray*}
Hence we have to exclude $S^1 \times D^2$ in Theorem~\ref{the:Lower_and_upper_bounds}.
Analogously we get 
\begin{eqnarray*}
x_{S^1 \times S^2}(\phi) & = & 0;
\\
\deg\big(\overline{\rho}(\widetilde{S^1 \times S^2};\phi)\big)
& = & 2 \cdot k,
\end{eqnarray*}
so that we cannot replace ``irreducible'' by ``prime'' in Theorem~\ref{the:Lower_and_upper_bounds}.
\end{example}

We conclude the paper with the proof of  Theorem~\ref{the:main_result_introduction}.

\begin{proof}[Proof of Theorem~\ref{the:main_result_introduction}]
  Let $M$ be an irreducible 3-manifold with infinite fundamental group. If $M$ is a graph
  manifold, then the statement is proved in Remark~\ref{rem:graph_manifolds}. Now suppose
  that $M$ is not a graph manifold. In this case the theorem follows from
  Theorem~\ref{the:Lower_and_upper_bounds}, applied in the special case $G = \pi_1(M)$,
  $\mu = \id_{\pi_1(M)}$ and $\nu = \alpha$. Here we use that by work of
  Hempel~\cite{Hempel(1987)} and the proof of the Geometrization Conjecture fundamental
  groups of 3-manifolds are residually finite.
\end{proof}

\typeout{-------------------------------------- References  ---------------------r------------------}



\end{document}